\newtheorem{theorem}{Theorem}[section]
\newtheorem{lemma}[theorem]{Lemma}
\newtheorem{proposition}[theorem]{Proposition}
\newtheorem{corollary}[theorem]{Corollary}
\theoremstyle{definition}
\newtheorem{definition}[theorem]{Definition}
\newtheorem{example}[theorem]{Example}
\theoremstyle{remark}
\newtheorem{remark}[theorem]{Remark}
\DeclareMathOperator{\End}{End}
\DeclareMathOperator{\ide}{id}
\DeclareMathOperator{\M}{M}
\DeclareMathOperator{\Z}{Z}
\newcommand{\ot}{\otimes}
\def\xcirc{\objectmargin{0.1pc}\def\objectstyle{\sssize}\diagram
\squarify<1pt>{}\circled\enddiagram}
\begin{document}

\title{Non commutative truncated polynomial extensions}

\author{Jorge A. Guccione}
\address{Departamento de Matem\'atica\\ Facultad de Ciencias Exactas y Naturales-UBA, Pabell\'on~1-Ciudad Universitaria\\ Intendente Guiraldes 2160 (C1428EGA) Buenos Aires, Argentina.}
\address{Instituto de Investigaciones Matem\'aticas ``Luis A. Santal\'o"\\ Facultad de Ciencias Exactas y Naturales-UBA, Pabell\'on~1-Ciudad Universitaria\\ Intendente Guiraldes 2160 (C1428EGA) Buenos Aires, Argentina.}
\email{vander@dm.uba.ar}

\author{Juan J. Guccione}
\address{Departamento de Matem\'atica\\ Facultad de Ciencias Exactas y Naturales-UBA\\ Pabell\'on~1-Ciudad Universitaria\\ Intendente Guiraldes 2160 (C1428EGA) Buenos Aires, Argentina.}
\address{Instituto Argentino de Matem\'atica-CONICET\\ Savedra 15 3er piso\\ (C1083ACA) Buenos Aires, Argentina.}
\email{jjgucci@dm.uba.ar}

\thanks{Jorge A. Guccione and Juan J. Guccione research were supported by UBACYT X095, PIP 112-200801-00900 (CONICET) and PICT 2006 00836 (FONCYT)}

\author{Christian Valqui}
\address{Pontificia Universidad Cat\'olica del Per\'u - Instituto de Matem\'atica y Ciencias Afi\-nes, Secci\'on Matem\'aticas, PUCP, Av. Universitaria 1801, San Miguel, Lima 32, Per\'u.}

\address{Instituto de Matem\'atica y Ciencias Afines (IMCA) Calle Los Bi\'ologos 245. Urb. San C\'esar. La Molina, Lima 12, Per\'u.}
\email{cvalqui@pucp.edu.pe}

\thanks{Christian Valqui research was supported by PUCP-DGI-2010-0025, Lucet 90-DAI-L005, SFB 478 U. M¨unster, Konrad Adenauer Stiftung.}

\thanks{The second author thanks the appointment as a visiting professor ``C\'atedra Jos\'e Tola Pasquel'' and the hospitality during his stay at the PUCP}

\subjclass[2010]{Primary 16S10; Secondary 16S80}
\keywords{Twisting maps, Polynomial rings}

\begin{abstract} We introduce the notion of non commutative truncated polynomial extension of an algebra $A$. We study two families of these extensions. For the first one we obtain a complete classification and for the second one, which we call upper triangular, we find that the obstructions to inductively construct them, lie in the Hochschild homology of $A$, with coefficients in a suitable $A$-bimodule.
\end{abstract}

\date{}

\dedicatory{}


\maketitle

\section*{Introduction}
Let $k$ be a commutative ring and let $A$, $C$ be unitary $k$-algebras. By definition, a  twisted tensor product of $A$ with $C$ over $k$, is an algebra structure defined on $A\ot_k C$, with unit $1\ot 1$, such that the canonical maps $i_A\colon A\to A\ot_k C$ and $i_C\colon C\to A\ot_k C$ are algebra maps satisfying $a\ot c = i_A(a)i_C(c)$. This structure was introduced independently in \cite{Ma1} and \cite{Tam}, and it has been formerly studied by many people with different motivations (In addition to the previous references see also \cite{B-M1}, \cite{B-M2}, \cite{Ca}, \cite{C-S-V}, \cite{C-I-M-Z}, \cite{G-G}, \cite{Ma2}, \cite{J-L-P-V}, \cite{VD-VK}).  A number of examples of classical and recently defined constructions in ring theory fits into this construction. For instance, Ore extensions, skew group algebras, smash products, etcetera (for the definitions and properties of these structures we refer to \cite{Mo} and \cite{Ka}). On the other hand, it has been applied to braided geometry and it arises as a natural representative for the product of noncommutative spaces, this being based on the existing duality between the categories of algebraic affine spaces and commutative algebras, under which the cartesian product of spaces corresponds to the tensor product of algebras. And last, but not least, twisted tensor products arise as a tool for building algebras starting with simpler ones.

Given algebras $A$ and $C$, a basic problem is to determine all the twisted tensor products of $A$ with $C$. To our knowledge, the first paper in which this problem was attacked in a systematic way was \cite{C}, in which C. Cibils studied and completely solved the case $C:= k\times k$. Subsequently, in \cite{J-L-N-S}, the methods developed in \cite{C} were extended to cover the case $C:= k\times \cdots \times  k$ ($n$-times). Meanwhile, in \cite{G-G-V}, some partial results were obtained in the cases $C:= k[x]$ and \mbox{$C:=k[[x]]$}.

In this paper we consider this problem when $C$ is a truncated polynomial algebra $k[y]/\langle y^n \rangle$. We call these twisted tensor products {\em non commutative truncated polynomial extensions of $A$}, because they have underlying module $A[y]/\langle y^n\rangle$ and include $A$ and $k[y]/\langle y^n \rangle$ as subalgebras.

It is well known that there is a canonical bijection between the twisted tensor products of $A$ with $C$ and the so called twisting maps $s\colon C\ot_k A \to A\ot_k C$. So each twisting map $s$ is associated with a twisted tensor product of $A$ with $C$ over $k$, which will be denoted by $A\ot_s C$.

It is evident that each $k$-linear map $s\colon k[y]/\langle y^n \rangle \ot_k A \to A \ot_k k[y]/\langle y^n \rangle$ determines and it is determined by $k$-linear maps $\gamma^r_j\colon A\to A$ ($0\le j,r < n$) such that
\begin{equation}
s(y^r\otimes a) = \sum_{j=0}^{n-1}\gamma_j^r(a)\otimes y^j.\label{eq1intro}
\end{equation}
The map $s$ so defined is a twisting map if the maps $\gamma_j^r$ satisfy suitable conditions. In particular, we will see that $B:=\ker \gamma^1_0$ should be a subalgebra of $A$, and $\gamma^1_0$ a nilpotent right $B$-linear map.

The main results of this paper are the following: Theorem~\ref{clasificacion}, which determines all the twisting maps such that
\begin{itemize}

\smallskip

\item[-] $B$ is a subalgebra of the center of $A$,

\smallskip

\item[-] $s(B\ot_k A)\subseteq A\ot_k B$,

\smallskip

\item[-] there exist $h\ge 2$ and $x\in A$ such that $\gamma^h_0=0$ and $\gamma^{h-1}_0(x)$ is invertible,

\end{itemize}
and Theorem~\ref{obstruccion en la cohomologia}, which establish that the obstruction to ``extend'' a twisting map
$$
s_n \colon \frac{k[y]}{\langle y^n \rangle} \ot_k A \to A \ot_k \frac{k[y]}{\langle y^n \rangle}
$$
with $\gamma^1_0=0$ to one
$$
s_{n+1} \colon \frac{k[y]}{\langle y^{n+1} \rangle} \ot_k A \to A \ot_k \frac{k[y]}{\langle y^{n+1} \rangle},
$$
lies in the Hochschild cohomology of $A$ with coefficients in a suitable bimodule. We will call these non commutative polynomial extensions {\em upper triangular}. An intersting fact of these extensions is that the evaluation in $y=0$ is an algebra homomorphism from $A\ot_s k[y]/\langle y^n \rangle$ to $A$. As we point out in Remark~\ref{twisted extensions by power series}, Theorem~\ref{obstruccion en la cohomologia} can also be used to construct a type of non commutative extensions of an algebra $A$ by power series, that we name upper triangular formal extensions of $A$. In order to compare this construction with the formal deformations of $A$ we first note that the power series $k$-algebra $A[[y]]$ has the following properties:

\begin{enumerate}

\smallskip

\item The canonical inclusion $k[[y]]\hookrightarrow A[[y]]$ is a morphism of unitary $k$-algebras and the right $k[[y]]$-module structure on $A[[y]]$ induced by this map is the usual one.

\smallskip

\item The canonical inclusion $A\hookrightarrow A[[y]]$ is a morphism of unitary $k$-algebras and the left $A$-modulo structure on $A[[y]]$ induced by this map is the usual one.

\smallskip

\item The canonical surjection $A[[y]]\to A$ is a morphism of unitary $k$-algebras.

\smallskip

\item The multiplication map $A[[y]]\times A[[y]]\to A[[y]]$ is $k[[y]]$-bilinear.

\smallskip

\end{enumerate}
Let $A_y$ be the underlying $k$-module of $A[[y]]$. The formal deformations of $A$ with unit $1$ are the associative unitary $k$-algebra structures on $A_y$ that satisfy conditions~(1), (3) and~(4), while the upper triangular formal extensions of $A$ are the associative unitary $k$-algebra structures on $A_y$ that satisfy conditions~(1), (2) and~(3).

\smallskip

From now on we assume implicitly that all the maps are $k$-linear maps, all the algebras are over $k$, and the tensor product over $k$ is denoted by $\ot$, without any subscript.

\smallskip

The paper is organized as follows: in Section~1) we make a quick review of the basic general properties of twisted tensor products and twisting maps, we determine necessary and sufficient  conditions for a family of maps $\gamma^r_j\colon A\to A$ ($0\le j,r < n$), in order that the map
$$
s\colon \frac{k[y]}{\langle y^n \rangle} \ot A \to A \ot \frac{k[y]}{\langle y^n \rangle},
$$
defined by the formula~\eqref{eq1intro}, is a twisting map, and we introduce a canonical representation of an arbitrary non commutative truncated polynomial extension $A\ot_s k[y]/\langle y^n\rangle$, of an algebra $A$, in the matrix algebra $M_n(A)$. In Section~2), we study a broad family of non commutative truncated polynomial extensions, which includes those with $\gamma^1_0=0$. In Section~3) we classify the non commutative truncated polynomial extensions with $\gamma^1_0\ne 0$ that satisfy a few natural conditions. Finally, in Section~4), we consider the non commutative truncated polynomial extensions with $\gamma^1_0 = 0$. These can be constructed inductively. For this, the main tool is Theorem~\ref{obstruccion en la cohomologia}. Using it, we obtain several families of these sort of extensions. In particular, all extensions of a truncated polynomial algebra $k [x]/\langle x^m\rangle$ satisfying $ s(y\ot x)\in xk[x]/\langle x^m\rangle\ot yk[y]/\langle y^n\rangle$.

\section{Some basic facts}\label{Some basic facts}
This section is divided in two parts. In the first one, we review the definitions of twisted tensor products and twisting maps, and we establish some of the basic results about these structures. For the proofs we refer to \cite{C-S-V}, \cite{VD-VK} and \cite{C-I-M-Z}. Recall from the introduction that a non commutative truncated polynomial extension of an algebra $A$ is a twisted tensor product $A\ot_s k[y]/\langle y^n \rangle$. In the second one, we start the study of these extensions, by determining  the conditions that a family of maps $\gamma^r_j\colon A\to A$ ($0\le j,r < n$) must fulfill in order that the map
$$
s\colon \frac{k[y]}{\langle y^n \rangle} \ot A \to A \ot \frac{k[y]}{\langle y^n \rangle},
$$
given by
$$
s(y^r\otimes a) = \sum_{j=0}^{n-1}\gamma_j^r(a)\otimes y^j,
$$
is a twisting map.

\subsection{General remarks}
Let $A$ and $C$ be algebras. Let $\mu_A$, $\eta_A$, $\mu_C$ and $\eta_C$ be the multiplication and unit maps of $A$ and $C$, respectively. A {\em twisted tensor product} of $A$ with $C$ is an algebra structure on the $k$-module $A\ot C$, such that the canonical maps
$$
i_A\colon A\to A\ot C\quad\text{and}\quad i_C\colon C\to A\ot C
$$
are algebra homomorphisms and $\mu\xcirc (i_A\ot i_C) = \ide_{A\ot C}$, where $\mu$ denotes the multiplication map of the twisted tensor product.

\smallskip

Assume we have a tensor product of $A$ with $C$. Then, the map
$$
s\colon C\ot A\to A\ot C,
$$
defined by $s: = \mu \xcirc (i_C\ot i_A)$, satisfies:

\begin{enumerate}

\smallskip

\item $s\xcirc (\eta_C\ot A) = A\ot \eta_C$ and $s\xcirc (C\ot \eta_A) = \eta_A\ot C$,

\smallskip

\item $s\xcirc (\mu_C\ot A) = (A\ot \mu_C)\xcirc (s\ot C)\xcirc (C\ot s)$,

\smallskip

\item $s\xcirc (C\ot \mu_A) = (\mu_A\ot C)\xcirc (A\ot s)\xcirc (s\ot A)$.

\smallskip

\end{enumerate}
A map satisfying these conditions is called a {\em twisting map}. Conversely, if
$$
s\colon C\ot A\to A\ot C
$$
is a twisting map, then $A\ot C$ becomes a twisted tensor product via
$$
\mu_s := (\mu_A\ot\mu_C)\xcirc (A\ot s \ot C).
$$
This algebra will be denoted $A\ot_s C$. Furthermore, these constructions are inverse to each other.

\smallskip

The following result is useful in order to check that a map $s\colon C\ot A\to A\ot C$ is a twisting map, and will be used implicitly in this paper.

\begin{proposition}\label{proposicion 1.1} Let $s\colon C\ot A\to A\ot C$ be a map satisfying conditions~(1) and~(2). If $(c_i)_{i\in I}$ generates $C$ as an algebra and
$$
s(c_i\ot aa') = (\mu_A\ot C)\xcirc (A\ot s)\xcirc (s\ot A)(c_i\ot a\ot a')
$$
for all $a,a'\in A$ and each index $i$, then $s$ is a twisting map.
\end{proposition}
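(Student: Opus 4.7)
The plan is to reduce the general form of condition~(3) to its assumed form at the generators $(c_i)_{i\in I}$ via a closure argument. Let
\[
S := \{c \in C : s(c \ot aa') = (\mu_A\ot C)\xcirc(A\ot s)\xcirc(s\ot A)(c\ot a\ot a') \text{ for all } a,a'\in A\}.
\]
Both sides of the defining identity depend $k$-linearly on $c$, so $S$ is a $k$-submodule of $C$. The strategy is to show that $S$ is in fact a $k$-subalgebra of $C$; since $S\supseteq\{c_i\}_{i\in I}$ by hypothesis, this forces $S=C$, which is exactly condition~(3) in full generality, and hence, combined with the assumed conditions~(1) and~(2), that $s$ is a twisting map.

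For the base case $1_C\in S$, I would appeal to condition~(1): the identity $s\xcirc(\eta_C\ot A)=A\ot\eta_C$ gives $s(1_C\ot aa')=aa'\ot 1_C$, and applying $s(1_C\ot -)=(-)\ot 1_C$ twice on the right-hand side yields $(\mu_A\ot C)\xcirc(A\ot s)\xcirc(s\ot A)(1_C\ot a\ot a')=aa'\ot 1_C$ as well.

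The core step is closure of $S$ under multiplication: given $c,c'\in S$, show $cc'\in S$. Starting from $s(cc'\ot aa')$, I would first apply condition~(2) to get $(A\ot\mu_C)\xcirc(s\ot C)\xcirc(C\ot s)(c\ot c'\ot aa')$, then use the hypothesis $c'\in S$ to rewrite $s(c'\ot aa')$ as $(\mu_A\ot C)\xcirc(A\ot s)\xcirc(s\ot A)(c'\ot a\ot a')$. The resulting expression contains, as a subterm, an application of $s$ to $c$ tensored with a product of two elements of $A$; invoking $c\in S$ unfolds this subterm into a composition of $s$'s and single multiplications in $A$ and $C$. Reassembling the outer $\mu_A\ot C$ and $A\ot\mu_C$ using condition~(2) twice more --- once to recognize $s(cc'\ot a)$ and once to recognize $s$ applied to its $C$-component tensored with $a'$ --- produces precisely $(\mu_A\ot C)\xcirc(A\ot s)\xcirc(s\ot A)(cc'\ot a\ot a')$, as required.

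The main obstacle will be the bookkeeping in this chase. Conceptually the identity to be verified is the standard hexagon-type compatibility between a braiding and two multiplications; the delicate point is that condition~(3) at $c$ and at $c'$ must be applied on opposite tensor slots of the intermediate diagram, and condition~(2) is invoked three times (once to unfold on the left and twice to refold on the right) in order to make the terms align. Once this diagrammatic manipulation is carried out, $S$ is a $k$-subalgebra of $C$ containing the generators, so $S=C$, and the proposition follows.
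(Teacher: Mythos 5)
Your argument is correct: the closure computation goes through exactly as you describe (conditions (1) and (2) hold globally by hypothesis, so condition (2) can be applied as a map identity to the sums of tensors arising when you refold, and the two applications of the hypothesis at $c'$ and at $c$ land on the right slots), so the set $S$ is a subalgebra containing the generators and hence all of $C$. The paper does not spell out a proof --- it defers to the cited references --- and your subalgebra/closure argument is essentially the standard one found there, so nothing further is needed.
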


\subsection{Non commutative truncated polynomial extensions}
In the sequel we fix $C:= k[y]/\langle y^n\rangle$. Let $A$ be a $k$-algebra and $s\colon C\otimes A\to A\otimes C$ a $k$-linear map. The equations
$$
s(y^r\otimes a) = \sum_{j=0}^{n-1}\gamma_j^r(a)\otimes y^j
$$
define $k$-linear maps $\gamma_j^r\colon A\to A$ for $0\le j,r< n$. Moreover, we put $\gamma_j^r:=0$ if $r\ge n$ and $0\le j<n$. Note that the $\gamma_j^r $'s are defined for $r\ge 0$ and $0\le j <n$.

\begin{proposition}\label{equivalencias de ser un twisting map} The following  assertions are equivalent:

\begin{enumerate}

\item The map $s$ is a twisting map.

\item

\begin{enumerate}

\item $\gamma_j^0=\delta_{j0}\ide$.

\item $\gamma_j^r(1)=\delta_{jr}$.

\item For $j< n$ and $0< r< n$,
$$
\gamma_j^r(ab)=\sum_{i=0}^{n-1}\gamma_i^r(a)\gamma_j^i(b).\qquad\text{(Product law)}
$$

\item For $j<n$, $r>1$ and $0<i<r$,
$$
\gamma_j^r = \sum_{l=0}^j\gamma_l^i\xcirc\gamma_{j-l}^{r-i}.\qquad\text{(Composition law)}
$$

\end{enumerate}

\item

\begin{enumerate}

\item $\gamma_j^0=\delta_{j0}\ide$.

\item $\gamma_j^1(1)=\delta_{j1}$.

\item For $j<n$,
$$
\gamma_j^1(ab)=\sum_{i=0}^{n-1}\gamma_i^1(a)\gamma_j^i(b).
$$

\item For $j<n$ and $r>1$,
$$
\gamma_j^r =\sum_{l=0}^j\gamma_l^1\xcirc\gamma_{j-l}^{r-1}.
$$

\end{enumerate}

\end{enumerate}

\end{proposition}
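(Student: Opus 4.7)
The plan is to establish (1)$\Leftrightarrow$(2) by directly unpacking the three twisting-map axioms in terms of the maps $\gamma^r_j$, and to recover (2) from (3) by induction on the upper index $r$, using (3)(d) as the underlying recursion.

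For (1)$\Leftrightarrow$(2), substitute $s(y^r\ot a)=\sum_j\gamma^r_j(a)\ot y^j$ into each of the three conditions on $s$ listed at the beginning of Section~\ref{Some basic facts}. The unit conditions $s\xcirc(\eta_C\ot A)=A\ot\eta_C$ and $s\xcirc(C\ot\eta_A)=\eta_A\ot C$ become (2)(a) and (2)(b), respectively. The axiom $s\xcirc(C\ot\mu_A)=(\mu_A\ot C)\xcirc(A\ot s)\xcirc(s\ot A)$, applied to $y^r\ot a\ot b$, unfolds into the product law (2)(c), while $s\xcirc(\mu_C\ot A)=(A\ot\mu_C)\xcirc(s\ot C)\xcirc(C\ot s)$, applied to $y^i\ot y^{r-i}\ot a$, unfolds into the composition law (2)(d); the convention $y^m=0$ for $m\ge n$ is responsible for restricting the indices to $m+l=j<n$. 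The implication (2)$\Rightarrow$(3) follows by specializing (2)(c) to $r=1$ and (2)(d) to $i=1$.

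For (3)$\Rightarrow$(2), note first that (2)(a) coincides with (3)(a) and that (2)(d) with $i=1$ coincides with (3)(d). Proceeding by induction on $r$, for $r\ge 2$ the condition (2)(b) follows from (3)(d), (3)(b) and the inductive hypothesis $\gamma^{r-1}_m(1)=\delta_{m,r-1}$:
\[
\gamma^r_j(1)=\sum_{l=0}^j\gamma^1_l\bigl(\gamma^{r-1}_{j-l}(1)\bigr)=\gamma^1_{j-r+1}(1)=\delta_{j,r},
\]
with the understanding that the middle expression vanishes for $j<r-1$. For (2)(d) with $1<i<r$, applying (3)(d) once and then the inductive hypothesis to $\gamma^{r-1}_{j-l}$ with inner index $i-1$ gives
\[
\gamma^r_j=\sum_{l=0}^j\gamma^1_l\gamma^{r-1}_{j-l}=\sum_{l,m}\gamma^1_l\gamma^{i-1}_m\gamma^{r-i}_{j-l-m}=\sum_{p=0}^j\Bigl(\sum_{l+m=p}\gamma^1_l\gamma^{i-1}_m\Bigr)\gamma^{r-i}_{j-p}=\sum_{p=0}^j\gamma^i_p\gamma^{r-i}_{j-p},
\]
where the last equality invokes (3)(d) with upper index $i\ge 2$. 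Finally, the product law (2)(c) for all $r$ is obtained from Proposition~\ref{proposicion 1.1}: the algebra $C$ is generated by $y$, condition (3)(c) is precisely the product-law hypothesis on the generator, and conditions (1) and (2) of the twisting-map definition have just been verified, so Proposition~\ref{proposicion 1.1} concludes that $s$ is a twisting map and hence that (2)(c) holds for every $r$.

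The main obstacle is the index bookkeeping in the inductive step for (2)(d): the re-indexing $p=l+m$ and the identification $\sum_{l+m=p}\gamma^1_l\gamma^{i-1}_m=\gamma^i_p$ rely on the bound $p\le j<n$, so that the partial sums produced by (3)(d) genuinely reconstitute $\gamma^i_p$ within the truncated range. Once this is set up, every remaining verification is a routine manipulation of indices.
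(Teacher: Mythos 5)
Your argument is correct, and most of it mirrors the paper's proof: the equivalence (1)$\Leftrightarrow$(2) by unpacking the axioms on the monomial basis, the trivial implication (2)$\Rightarrow$(3), the derivation of (2)(b) from (3)(a), (3)(b), (3)(d), and an inductive proof of the full Composition law (2)(d). For (2)(d) you induct on the upper index $r$, peeling off a $\gamma^1$ on the left and regrouping $\sum_{l+m=p}\gamma^1_l\xcirc\gamma^{i-1}_m=\gamma^i_p$ via (3)(d), whereas the paper inducts on the splitting index $i$ and regroups $\sum_{h\le l}\gamma^1_h\xcirc\gamma^i_{l-h}=\gamma^{i+1}_l$; these are the same computation organized differently, and both correctly only need the lower indices to stay below $n$ (the convention $\gamma^r_j=0$ for $r\ge n$ handles the rest). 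The genuine divergence is in the Product law (2)(c): the paper proves it for all $r$ by an explicit induction on $r$ (using (3)(c), (3)(d), (2)(a) and the just-proved (2)(d)), while you instead verify conditions (1) and (2) of the twisting-map definition (via (2)(a), (2)(b), (2)(d)) and then invoke Proposition~\ref{proposicion 1.1} with the generator $y$, for which (3)(c) is exactly the hypothesis, concluding that $s$ is a twisting map and recovering (2)(c) through the already-established implication (1)$\Rightarrow$(2). This is legitimate and shorter, and it is consistent with the paper's remark that Proposition~\ref{proposicion 1.1} ``will be used implicitly''; what it buys is avoiding the longest computation in the paper's proof, at the cost of relying on Proposition~\ref{proposicion 1.1} (stated there without proof) and of glossing slightly over the routine check that the composition-law cases with $r=0$ or $t=0$, and with $r+t\ge n$, are covered --- the former by (2)(a) and the latter by your observation that (3)(d), hence your induction, is valid for all $r>1$ including $r\ge n$. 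The paper's route is self-contained: its induction for (2)(c) is in effect a proof of the generator criterion in this special case.
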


\begin{proof} (1)~$\Leftrightarrow$~(2)\enspace We know that $s$ is a twisting map if and only if

\begin{enumerate}

\item[(a')] $s(1\otimes a)=a\otimes 1$,

\item[(b')] $s(y^r\ot 1)=1\ot y^r$,

\item[(c')]$s(y^r\otimes ab)=(\mu_A\otimes C)\xcirc (A\otimes s)\xcirc (s\otimes A) (y^r\otimes a\otimes b)$,

\item[(d')] $s(y^ry^t\otimes a)=(A\otimes\mu_C)\xcirc (s\otimes C)\xcirc (C\otimes s) (y^r\otimes y^t\otimes a)$,

\end{enumerate}
for $0<r,t< n$ and $a,b\in A$. But a direct computation shows that~(a')~$\Leftrightarrow$~(2)(a), (b')~$\Leftrightarrow$~(2)(b), (c')~$\Leftrightarrow$~(2)(c) and (d')~$\Leftrightarrow$~(2)(d).

\smallskip

\noindent (2)~$\Rightarrow$~(3)\enspace This is trivial.

\smallskip

\noindent (3)~$\Rightarrow$~(2)\enspace First note that~(2)(b) follows immediately from~(3)(a), (3)(b) and~(3)(d). We now prove that condition~(2)(d) holds. For $i=1$ and $r>1$ this is the same as~(3)(d). We suppose that~(2)(d) is true for a fixed $i>0$ and all $r>i$, and we prove it for $i+1$ and all $r>i+1$. Fix $r>i+1$. Then
\allowdisplaybreaks
\begin{align*}
\gamma^r_j &=\sum_{h=0}^j\gamma^1_h\xcirc\gamma^{r-1}_{j-h}&&\text{by~(3)(d)}\\
&= \sum_{h=0}^j\sum_{u=0}^{j-h}\gamma^1_h\xcirc \gamma^i_u\xcirc \gamma^{r-i-1}_{j-h-u} && \text{by inductive hypothesis}\\
&=\sum_{h=0}^j\sum_{l=h}^j\gamma^1_h\xcirc \gamma^i_{l-h}\xcirc \gamma^{r-i-1}_{j-l}&& \text{setting $l:=u+h$}\\
&= \sum_{l=0}^j\sum_{h=0}^l\gamma^1_h\xcirc \gamma^i_{l-h}\xcirc \gamma^{r-i-1}_{j-l}\\
&= \sum_{l=0}^j \gamma^{i+1}_l\xcirc  \gamma^{r-i-1}_{j-l}.&&\text{by~(3)(d)}
\end{align*}
So~(2)(d) is true. It remains to check that~(2)(c) is also true. For $r=1$ it is the same as~(3)(c). Suppose~(2)(c) holds for a fixed $r$ with $1\le r<n-1$. Then
\begin{align*}
\gamma_j^{r+1}(ab)&=\sum_{l=0}^j\gamma_l^1\bigl(\gamma_{j-l}^r(ab)\bigr) &&\text{by~(3)(d)}\\
&=\sum_{l=0}^j\gamma_l^1\left(\sum_{i=0}^{n-1}\gamma_i^r(a)\gamma_{j-l}^i(b)\right)&&\text{by inductive hypothesis}\\
&=\sum_{l=0}^j\sum_{i=0}^{n-1}\sum_{m=0}^{n-1}\gamma_m^1\left(\gamma_i^r(a)\right)\gamma_l^m \left(\gamma_{j-l}^i(b) \right)&&\text{by~(3)(c)}\\
&=\sum_{i=0}^{n-1}\sum_{m=0}^{n-1}\gamma_m^1\left(\gamma_i^r(a)\right)\sum_{l=0}^j\gamma_l^m \left(\gamma_{j-l}^i(b) \right)\\
&=\sum_{m=0}^{n-1}\sum_{i=0}^{n-1}\gamma_m^1\left(\gamma_i^r(a)\right)\gamma_j^{m+i}(b) && \text{by~(2)(a) and (2)(d)}\\
&=\sum_{m=0}^{n-1}\sum_{u=m}^{n-1}\gamma_m^1\left(\gamma_{u-m}^r(a)\right)\gamma_j^u(b) && \parbox{1.5in}{setting $u:=m+i$, since $\gamma_j^u=0$ for $u\ge n$}\\
&=\sum_{u=0}^{n-1}\sum_{m=0}^u\gamma_m^1\left(\gamma_{u-m}^r(a)\right)\gamma_j^u(b)\\
&=\sum_{u=0}^{n-1}\gamma_u^{r+1}(a)\gamma_j^u(b).&&\text{by~(3)(d)}
\end{align*}
This finishes the proof.
\end{proof}

In the following three remarks we assume that $s\colon C\ot A\to A\ot C$ is a twisting map.

\begin{remark}\label{ker gamma^1_0} Let $B:=\ker \gamma^1_0$. By items~(3)(a), (3)(b), the Product law and the Composition law, $B$ is a subalgebra of $A$ and $\gamma^1_0$ is a right $B$-linear map. Consequently, if $b'b=bb'=1$ and $b\in B$, then $b'\in B$.
\end{remark}

\begin{remark} The Composition Law is valid for $r\ge 0$ and $0\le i\le r$. This follows immediately from items~(2)(a) and~(2)(d), and will be used freely throughout the paper.
\end{remark}

\begin{remark}\label{formula para gamma_k^j} From item~(3)(d) of the above proposition it follows easily by induction on $r$ that
\begin{equation}
\gamma^r_j=\sum_{u_1,\dots, u_r\ge 0 \atop u_1+\cdots+u_r=j}\gamma^1_{u_1}\xcirc\cdots\xcirc \gamma^1_{u_r}\label{eq formula para gamma_k^j}
\end{equation}
for all $r\ge 1$. In particular $\gamma^r_0=\gamma^1_0\xcirc \cdots \xcirc \gamma^1_0$ ($r$ times).
\end{remark}

\begin{corollary}\label{gamma como suma} For each $0\le j < n$, let $\gamma_j^1\colon A\to A$ be a $k$-linear map satisfying \mbox{$\gamma^1_j(1)\!=\delta_{1j}$.} Set $\gamma^0_j:=\delta_{0j} \ide$ and
$$
\gamma^r_j:=\sum_{u_1,\dots, u_r\ge 0\atop u_1+\cdots+u_r=j}\gamma^1_{u_1}\xcirc\cdots\xcirc \gamma^1_{u_r}\quad \text{for $r>1$ and $j<n$.}
$$
If $\gamma_j^n=0$ for all $j< n$ and
$$
\gamma_j^1(ab)=\sum_{i=0}^{n-1}\gamma_i^1(a)\gamma_j^i(b)\qquad\text{for $a,b\in A$ and $j<n$,}
$$
then the maps $\gamma_j^r$ satisfy the equivalent conditions of Proposition~\ref{equivalencias de ser un twisting map}.
\end{corollary}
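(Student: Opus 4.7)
The plan is to verify condition~(3) of Proposition~\ref{equivalencias de ser un twisting map}, which is the cleanest of the three equivalent formulations since the family $\{\gamma^r_j\}$ has been built around exactly that recursion. Items~(3)(a) and~(3)(b) will follow immediately from the assignment $\gamma^0_j := \delta_{0j}\ide$ and the hypothesis $\gamma^1_j(1) = \delta_{1j}$, while~(3)(c) is one of the stated hypotheses of the corollary. So the only real work is condition~(3)(d).

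I would establish~(3)(d) by splitting off the outermost index in the defining $r$-fold sum. Fixing $u_1 = l$, which ranges over $0 \le l \le j$ since the remaining $u_i$ are nonnegative and sum to $j-l$, and grouping the leftover factors according to their index sum gives
\[
\gamma^r_j = \sum_{l=0}^{j} \gamma^1_l \xcirc \sum_{u_2,\dots,u_r\ge 0 \atop u_2+\cdots+u_r=j-l} \gamma^1_{u_2} \xcirc \cdots \xcirc \gamma^1_{u_r} = \sum_{l=0}^{j} \gamma^1_l \xcirc \gamma^{r-1}_{j-l}.
\]
Since $0 \le j-l < n$ throughout, the inner sum is exactly the defining expression for $\gamma^{r-1}_{j-l}$, and the identity holds for every $r \ge 2$ and $0 \le j < n$.

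The final bookkeeping point is that Proposition~\ref{equivalencias de ser un twisting map} tacitly uses the convention $\gamma^r_j = 0$ for $r \ge n$ and $j < n$; this convention is invoked explicitly in its proof. The hypothesis $\gamma^n_j = 0$ supplies this at level $r = n$, and the recursion just proved propagates it to all $r > n$ by a straightforward induction on $r$, since then every $\gamma^{r-1}_{j-l}$ in the sum vanishes by the inductive hypothesis. I do not foresee any serious obstacle: the whole argument reduces to a one-step multinomial splitting, plus a trivial upward propagation of vanishing via the recursion.
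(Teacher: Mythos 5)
Your proposal is correct and follows essentially the same route as the paper: items (3)(a)--(3)(c) of Proposition~\ref{equivalencias de ser un twisting map} hold by definition and hypothesis, (3)(d) comes from splitting off the first index in the multinomial sum, and the vanishing $\gamma^r_j=0$ for $r\ge n$ (needed to match the convention used in the proposition) is propagated upward from the hypothesis $\gamma^n_j=0$ via that recursion. The paper's proof is merely terser, asserting these points as clear from the definition, while you spell out the one-step splitting and the induction explicitly.
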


\begin{proof} By hypothesis we know that~(3)(b) and~(3)(c) of Proposition~\ref{equivalencias de ser un twisting map} hold. Moreover, by the definition of the $\gamma^r_j$'s, it is clear that the maps $\gamma^r_j$ satisfy items~(3)(a) and~(3)(d) of the same proposition, and that $\gamma^r_j=0$ for $r\ge n$.
\end{proof}

\begin{remark} Notice that when $\gamma^1_0=0$, then the condition $\gamma^n_j=0$ for $j<n$ (in the above corollary) is automatically satisfied.
\end{remark}

\begin{example}\label{Ore} Assume that $\gamma^1_j = 0$ for all $j>1$. Then, from formula~\ref{eq formula para gamma_k^j} it follows immediately that $\gamma^r_j = 0$ for all $j>r$. In this case Conditions~(3)(a) and~(3)(c) becomes
\begin{align*}
& \gamma^1_0(1) = 0,\\
& \gamma^1_1(1) = 1,\\
& \gamma^1_0(ab) = \gamma^1_0(a)b+\gamma^1_1(a)\gamma^1_0(b)\\
\intertext{and}
& \gamma^1_1(ab) = \gamma^1_1(a)\gamma^1_1(b).
\end{align*}
In other words $\gamma^1_1$ is an algebra endomorphism and $\gamma^1_0$ is a $\gamma^1_1$-derivation. So, by Corollary~\ref{gamma como suma}, in order to have a twisting map, we must require that
\begin{equation}
\gamma^n_j = \sum_{u_1,\dots, u_r\in\{0,1\}\atop u_1+\cdots+u_n=j}\gamma^1_{u_1}\xcirc\cdots\xcirc \gamma^1_{u_r} = 0 \quad \text{for all $j<n$.}\label{extensionesdeOre}
\end{equation}
For example, for $n = 2$ this becomes
$$
\gamma^1_0\xcirc \gamma^1_0 = 0\quad\text{and}\quad \gamma^1_0\xcirc \gamma^1_1 + \gamma^1_1\xcirc \gamma^1_0 = 0.
$$
We will call these twisting maps and their corresponding twisted products, {\em lower triangular}. Note that there is a close analogy of these twisted products with the classical Ore extensions.
\end{example}

Associated with a twisting map
$$
s\colon C\otimes A\to A\otimes C
$$
we have the matrix $M\in\M_n(\End_k(A))$ given by
\vspace{2\jot}
$$
\vspace{2\jot}
M:=\begin{pmatrix}
\ide & 0 &\dots & 0\\
\gamma_0^1&\gamma_1^1&\dots&\gamma_{n-1}^1\\
\vdots &&\ddots &\vdots\\
\gamma_0^{n-1}&\gamma_1^{n-1}&\dots&\gamma_{n-1}^{n-1}
\end{pmatrix}.
$$
Moreover, for $a\in A$ we define the matrix $M(a)\in\M_n(A)$ as the evaluation of $M$ in $a$. That is
$$
M(a)_{ij}:=\gamma^i_j(a)\quad\text{($0\le i,j< n$)}
$$

\begin{corollary}\label{M es multiplicativa} The matrices $M(a)$ fulfill:

\begin{enumerate}

\item $M(1)=Id$.

\item $M(ab)=M(a)M(b)$.

\end{enumerate}

\end{corollary}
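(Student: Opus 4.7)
The plan is to verify both items directly from the conditions of Proposition~\ref{equivalencias de ser un twisting map}, which the twisting map $s$ satisfies.

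For item~(1), I would unfold the definition: $M(1)_{ij} = \gamma^i_j(1)$. By item~(2)(b) of Proposition~\ref{equivalencias de ser un twisting map} this equals $\delta_{ij}$ for $0 < i < n$ (and the top row is $\ide$, $0$, \dots, $0$ by construction, giving $\gamma^0_j(1) = \delta_{0j}$ from~(2)(a)). Hence $M(1)$ is the identity matrix.

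For item~(2), I would compute the $(i,j)$-entry of the matrix product:
\[
(M(a)M(b))_{ij} = \sum_{k=0}^{n-1} M(a)_{ik} M(b)_{kj} = \sum_{k=0}^{n-1} \gamma^i_k(a)\gamma^k_j(b).
\]
When $0 < i < n$ this is precisely $\gamma^i_j(ab)$ by the Product law~(2)(c). When $i=0$, $\gamma^0_k = \delta_{0k}\ide$ by~(2)(a), so the sum collapses to $a\gamma^0_j(b) = a\cdot\delta_{0j}b = \delta_{0j}\,ab = \gamma^0_j(ab)$. In either case the entry equals $M(ab)_{ij}$.

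No step looks like a genuine obstacle: the corollary is essentially a reformulation of the Product law and the unit normalizations as a matrix identity. The only small care needed is handling the $i=0$ row separately, since the Product law as stated in Proposition~\ref{equivalencias de ser un twisting map} is restricted to $0 < r < n$; this is immediately covered by condition~(2)(a).
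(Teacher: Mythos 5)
Your proof is correct and follows essentially the same route as the paper, which simply cites the Product law together with the unit condition $\gamma^r_j(1)=\delta_{jr}$; you merely spell out the entrywise matrix computation in detail. Your explicit handling of the $i=0$ row via condition~(2)(a) is a sensible precision that the paper leaves implicit.
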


\begin{proof} This follows from the Product law and the fact that $\gamma^r_j(1)=\delta_{j1}$.
\end{proof}

\begin{theorem} The formulas $\varphi(a):=M(a)$ for $a\in A$, and
$$
\vspace{2\jot}
\varphi(y):=\begin{pmatrix}
0&1&\cdots &0\\
\vdots&\vdots&\ddots&\vdots\\
0&0&\cdots &1\\
0&0&\cdots &0
\end{pmatrix}\qquad\text{(the nilpotent Jordan matrix $J_0$),}
$$
define a faithful representation $\varphi\colon A\ot_s C\to M_n(A)$.
\end{theorem}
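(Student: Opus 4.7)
My plan is to define $\varphi$ on the canonical $k$-decomposition $A\ot_s C=\bigoplus_{j=0}^{n-1}A\ot y^j$ by the forced formula $\varphi(a\ot y^j):=M(a)J_0^j$, and then verify multiplicativity directly on such elements. Since $M$ is itself multiplicative (Corollary~\ref{M es multiplicativa}) and the product in $A\ot_s C$ is $(a\ot y^i)(b\ot y^j)=\sum_{k}a\,\gamma^i_k(b)\ot y^{k+j}$ (with $y^{k+j}=0$ for $k+j\ge n$), the whole verification reduces to the single commutation identity
\[
J_0^i\, M(b)=\sum_{k=0}^{n-1}M(\gamma^i_k(b))\,J_0^k,\qquad b\in A,\ 0\le i<n.
\]

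The core step is the case $i=1$, which I would prove by entrywise comparison. A direct computation gives $(J_0 M(b))_{mj}=\gamma^{m+1}_j(b)$ for $0\le m\le n-2$ and $0$ for $m=n-1$, whereas $\bigl(\sum_{l}M(\gamma^1_l(b))J_0^l\bigr)_{mj}=\sum_{l=0}^{j}\gamma^m_{j-l}(\gamma^1_l(b))$. For $0\le m\le n-2$ the Composition law of Proposition~\ref{equivalencias de ser un twisting map} applied with $r=m+1$ and split at $p=m$ (after reindexing $l\mapsto j-l$) produces exactly $\gamma^{m+1}_j(b)$. For $m=n-1$ the required vanishing $\sum_{l=0}^{j}\gamma^{n-1}_{j-l}(\gamma^1_l(b))=0$ for every $j<n$ is the one relation that does not fall out of the stated Composition law (which is only recorded for $r<n$); I would extract it by writing $y^n=y^{n-1}\cdot y$ and evaluating $s(y^n\ot b)$ through the twisting axiom $s\xcirc(\mu_C\ot A)=(A\ot\mu_C)\xcirc(s\ot C)\xcirc(C\ot s)$. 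The left-hand side vanishes because $y^n=0$ in $C$, and matching the coefficient of $y^j$ on the right yields the desired identity. This is the main obstacle of the proof. The general case $i\ge 2$ of the commutation identity then follows by induction: apply $J_0$ to the identity for $i-1$, apply the $i=1$ identity to each $M(\gamma^{i-1}_k(b))$, and regroup using~(3)(d) of Proposition~\ref{equivalencias de ser un twisting map}.

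Faithfulness is then a short computation. For $z=\sum_{j=0}^{n-1}a_j\ot y^j$ in $A\ot_s C$, the $(0,k)$-entry of $\varphi(z)=\sum_j M(a_j)J_0^j$ equals $\sum_{j=0}^{k}\gamma^0_{k-j}(a_j)=a_k$, since $\gamma^0_l=\delta_{l0}\ide$; hence $\varphi(z)=0$ forces every $a_k$ to vanish, and $\varphi$ is injective.
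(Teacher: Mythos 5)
Your proposal is correct and follows essentially the same route as the paper: the heart of both arguments is the entrywise identity $J_0M(b)=\sum_{l}M(\gamma^1_l(b))J_0^l$ obtained from the Composition law, with the last row handled by the vanishing relation coming from $y^{n-1}\cdot y=0$ (the paper phrases this as $\gamma^n_j=0$ together with its remark that the Composition law extends beyond $r<n$, which is exactly the identity you extract from the twisting axiom), and faithfulness read off from the first row of the matrices. The only difference is bookkeeping: you verify multiplicativity on all elements $a\ot y^i$ by an explicit induction giving $J_0^iM(b)=\sum_k M(\gamma^i_k(b))J_0^k$, whereas the paper compresses this into the observation that, since $\varphi(y)^n=0$, it suffices to check the single relation $\varphi(y)\varphi(a)=\sum_u\varphi(\gamma^1_u(a))\varphi(y)^u$.
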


\begin{proof} Since $\varphi(y)^n=0$, in order to check that $\varphi$ defines an algebra map, we only need to verify that
$$
\varphi(y)\varphi(a)=\varphi(\gamma^1_0(a))+\varphi(\gamma^1_1(a))\varphi(y)+\cdots+ \varphi(\gamma^1_{n-1}(a)) \varphi(y)^{n-1}.
$$
But note that
$$
\bigl(J_0 M(b)\bigr)_{ij} = \begin{cases} M(b)_{i+1,j}&\text{for $i<n-1$,}\\ 0 & \text{otherwise,}\end{cases}
$$
and
$$
\bigl( M(b) J_0^u\bigr)_{ij} = \begin{cases} M(b)_{i,j-u}&\text{for $j\ge u$,}\\ 0& \text{otherwise,}
\end{cases}
$$
and so
\allowdisplaybreaks
\begin{align*}
\bigl(\varphi(y)\varphi(a)\bigr)_{ij}
&=\bigl(J_0 M(a)\bigr)_{ij}\\
&=\gamma^{i+1}_j(a)\\
&=\sum_{u=0}^j\gamma^i_{j-u}\bigl(\gamma_u^1(a)\bigr)\\
&=\sum_{u=0}^j M\bigl(\gamma_u^1(a)\bigr)_{i,j-u}\\
&=\sum_{u=0}^{n-1}\bigl(M\bigl(\gamma_u^1(a)\bigr)J_0^u\bigr)_{ij}\\
&=\sum_{u=0}^{n-1}\bigl(\varphi(\gamma^1_u(a))\varphi(y)^u\bigr)_{ij},
\end{align*}
where the second equality is valid also in the case $i=n-1$, since
$$
\bigl(J_0 M(a)\bigr)_{n-1,j}=0 =\gamma^n_j(a).
$$
The injectivity follows from the fact that the composition of $\varphi$ with the surjection onto the first row gives the canonical linear isomorphism $A\ot_s C\to A^n$.
\end{proof}

\subsection{Simplicity of the noncommutative truncated polynomial extensions} Next we characterize the simple twisted tensor products $A\ot_s C$.

\begin{proposition}\label{simplicidad} A twisted tensor products $A\ot_s C$ is simple if and only if $A a \gamma^{n-1}_0(A) = A$ for all $a\in \ker \gamma^1_0\setminus \{0\}$.
\end{proposition}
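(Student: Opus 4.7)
The plan is to prove both implications by a careful analysis of multiplication at the top $y$-power, using the nilpotency of $\gamma^1_0$ (which follows from $\gamma^n_0 = (\gamma^1_0)^n = 0$ via Remark~\ref{formula para gamma_k^j}) and the fact that $1 \in B$ (since $\gamma^1_0(1) = 0$ by Proposition~\ref{equivalencias de ser un twisting map}(3)(b)).

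For $(\Rightarrow)$, given $a \in B \setminus \{0\}$, I would consider the two-sided ideal $I := \langle a \ot y^{n-1} \rangle$. The crucial observation is that for $a \in B$, $(\gamma^1_0)^i(a) = 0$ for all $i \ge 1$, so for any $x = \sum_i a_i \ot y^i$ one has
\[
x(a \ot y^{n-1}) = \sum_i a_i\gamma^i_0(a) \ot y^{n-1} = a_0\, a \ot y^{n-1}.
\]
Multiplying on the right by a general $z = \sum_j b_j \ot y^j$ produces $\sum_{l,j} a_0 a\gamma^{n-1}_l(b_j) \ot y^{l+j}$, whose $y^0$-coefficient is $a_0a\gamma^{n-1}_0(b_0)$. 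Hence the $y^0$-coefficient of any element of $I$ lies in the left ideal $Aa\gamma^{n-1}_0(A)$; by simplicity $1 \ot 1 \in I$, and extracting its $y^0$-coefficient gives $1 \in Aa\gamma^{n-1}_0(A) = A$.

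For $(\Leftarrow)$, let $I \neq 0$ be an ideal. I would first extract from a nonzero $x = \sum a_i \ot y^i \in I$ (with $i_0$ the minimal index where $a_i \neq 0$) the element $x(1 \ot y^{n-1-i_0}) = a_{i_0} \ot y^{n-1} \in I$. Iterating left multiplication by $1 \ot y$, which acts on $A \ot y^{n-1}$ as $b \ot y^{n-1} \mapsto \gamma^1_0(b) \ot y^{n-1}$, and invoking nilpotency of $\gamma^1_0$, produces $c \ot y^{n-1} \in I$ with $c \in B \setminus \{0\}$. Then the identity
\[
(a' \ot 1)(c \ot y^{n-1})(a'' \ot y^{n-1}) = a'c\gamma^{n-1}_0(a'') \ot y^{n-1}
\]
combined with the hypothesis gives $A \ot y^{n-1} \subseteq I$.

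The main obstacle is descending from $A \ot y^{n-1} \subseteq I$ to $1 \ot 1 \in I$, since $A \ot yC$ is not a two-sided ideal when $\gamma^1_0 \neq 0$ (right multiplication can create $y^0$ terms via the $\gamma^i_0$). I would resolve this by invoking the hypothesis with $a = 1$: choose $a_i, b_i \in A$ with $\sum a_i\gamma^{n-1}_0(b_i) = 1$, and compute
\[
\sum_i (a_i \ot 1)(1 \ot y^{n-1})(b_i \ot 1) = 1 \ot 1 + \sum_{l=1}^{n-1} d_l \ot y^l \in I
\]
for suitable $d_l \in A$. Passing to the quotient $Q := (A \ot_s C)/I$ with $\bar y := \overline{1 \ot y}$, this reads $\bar 1 = \bar d\, \bar y$ for some $\bar d \in Q$. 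Combined with $\bar y^{n-1} = 0$ (from $A \ot y^{n-1} \subseteq I$), a descending induction, namely $\bar y^k = 0 \Rightarrow \bar y^{k-1} = \bar 1\,\bar y^{k-1} = \bar d\,\bar y^k = 0$, forces $\bar 1 = 0$, so $Q = 0$ and $I = A \ot_s C$.
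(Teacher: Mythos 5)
Your proof is correct, and its skeleton matches the paper's: the forward implication by extracting the $y^0$-coefficient of elements of the ideal generated by $a\ot y^{n-1}$ (using $a\in B$ so that $x(a\ot y^{n-1})=a_0a\ot y^{n-1}$), and, in the converse, the reduction of an arbitrary nonzero ideal $I$ to one containing $c\ot y^{n-1}$ with $c\in B\setminus\{0\}$ via right multiplication by $1\ot y^{n-1-i_0}$, left multiplication by $1\ot y$, and nilpotency of $\gamma^1_0$, are exactly the paper's computations. Where you genuinely diverge is in the final descent, the step you call the main obstacle. The paper never leaves the single ideal $D(a\ot y^{n-1})D$: applying the hypothesis at the given $a$ it produces an element $1+Sy$ of that ideal and removes the tail by iterating the identity $1-S^2y^{2i}=(1+Sy^i)-S(1+Sy^i)y^i$, which doubles the $y$-degree of the error term until it is annihilated by $y^n=0$; this neatly sidesteps the fact that $Sy$ itself need not be nilpotent. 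You instead first upgrade $c\ot y^{n-1}\in I$ to $A\ot y^{n-1}\subseteq I$ (your right factor $a''\ot y^{n-1}$ truncates the higher-degree terms), then invoke the hypothesis a second time, at $a=1\in\ker\gamma^1_0$, to place $1\ot 1$ plus an element of $Dy$ in $I$, and conclude in the quotient from $\bar 1=\bar d\,\bar y$ and $\bar y^{n-1}=0$ by descending induction on powers of $\bar y$. Both mechanisms are sound: the paper's trick is more economical (the hypothesis is used only at the given $a$, and no quotient is introduced), while your route makes it more transparent why an element of the form $1+Sy$ in $I$ forces $I=D$, at the harmless extra cost of also using the hypothesis at $1$; a cosmetic remark is that in the forward direction you should say explicitly that $A a\gamma^{n-1}_0(A)$ is a left ideal of $A$, so containing $1$ it equals $A$.
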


\begin{proof} Let $D:=A\ot_s C$ and $B:=\ker \gamma^1_0$. By definition $D$ is simple if and only if $DPD = D$ for all $P\in D\setminus \{0\}$. Write
$$
P:= a_iy^i+a_{i+1}y^{i+1}+\dots+a_{n-1}y^{n-1}
$$
with $a_i\ne 0$. Since
$$
a_iy^{n-1} = P y^{n-i-1},\quad ya_iy^{n-1} = \gamma^1_0(a_i)y^{n-1},
$$
and, by Remark~\ref{formula para gamma_k^j}, the map $\gamma^1_0$ is nilpotent, in order to check that $D$ is simple, it is necessary and sufficient to verify that
$$
Day^{n-1}D=D\quad\text{for all $a\in B\setminus\{0\}$.}
$$
Let $Q:= \sum b_iy^i$ and $R:= \sum c_iy^i$ in $D$. Using that $a\in B$, it is easy to see that
$$
Q ay^{n-1} R = b_0 a \gamma^{n-1}_0(c_0) + Sy\quad\text{where $S\in D$.}
$$
From this it follows immediately that if $D$ is simple, then
$$
A a \gamma^{n-1}_0(A) = A\quad\text{for all $a\in B\setminus \{0\}$.}
$$
Conversely, if this is true, then there exist $Q_1,R_1,\dots,Q_t,R_t$ such that
$$
\sum_{i=1}^t Q_i ay^{n-1} R_i = 1 + Sy\quad\text{where $S\in D$.}
$$
Hence, in order to finish the proof it suffices to note that if
$$
1+Sy^i\in Day^{n-1}D,
$$
then
$$
1-S^2y^{2i} = 1+Sy^i - S(1+Sy^i)y^i
$$
also belongs to $Day^{n-1}D$.
\end{proof}

\section{A family of twisting maps}
Recall that $C:=k[y]/\langle y^n\rangle$. Let $A$ be a $k$-algebra. The aim of this section is to study the broad family of twisting maps
$$
s\colon C\ot A\to A\ot C
$$
satisfying the following conditions:

\begin{itemize}

\smallskip

\item[A1)] {\em There exist $1\le h\le n$ and  $x\in A$ such that $\gamma^h_0=0$ and $q:=\gamma^{h-1}_0(x)$ is right cancelable},

\smallskip

\item[A2)] {\em $\gamma^1_0$ is an endomorphism of $B$-bimodules, where $B:=\ker \gamma^1_0$}.

\medskip

\end{itemize}
Actually Condition~A1) is used throughout all the section, but Condition~A2) is not used until Lemma~\ref{propiedadpolinomios}.

\begin{remark}\label{A2 es automatica si A es conmutativo} Condition~A1) is always fulfilled if $A$ is a cancelative ring. On the other hand, by Remark~\ref{ker gamma^1_0}, we know that $\gamma^1_0$ is a right $B$-linear map. So, Condition~A2) is automatically fulfilled if $B$ is included in the center of $A$. Hence conditions~A1) and~A2) are both satisfied if $A$ is a commutative domain.
\end{remark}

\begin{remark}\label{si q es inversible puede tomars=1} For some results we will need to ask that $q$ is invertible. Since $\gamma^{h-1}_0(A)$ is a right ideal of $B$, in this case we can assume that $q=1$.
\end{remark}

\begin{remark} The family that we are going to consider includes all the twisting maps with $\gamma^1_0=0$. However the results we establish in this section only are relevant when $\gamma^1_0\ne 0$.
\end{remark}

In the sequel, for every $a\in $A we let $M_{(h)}(a)$ denote the $h\times h$-submatrix of $M(a)$ formed by the first $h$ rows and columns of $M(a)$, and we fix both,  $x$ and $q$. Note that, by the Composition Law, $q\in B$.

\begin{lemma}\label{bloque} Let $\gamma_j^i\colon A\to A$ ($0\le j< n$ and $r\ge 0$) be a family of maps satisfying the Composition law and~A1). Assume that
$$
\gamma^h_0(aa') = \sum_{i=0}^{n-1}\gamma_i^h(a)\gamma_0^i(a')\quad\text{for all $a,a'\in A$.}
$$
Then, for each $j=0,\dots,h-1$ and $i\ge h$, the map $\gamma_j^i=0$. Consequently,
$$
M_{(h)}(ab)=M_{(h)}(a)M_{(h)}(b)\qquad\text{for all $a,b\in A$.}
$$
\end{lemma}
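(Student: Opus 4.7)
The plan is to reduce the statement to the single nontrivial claim that $\gamma_j^h=0$ for $0\le j\le h-1$, and then handle that claim by induction on $j$ driven by a carefully chosen auxiliary element. As a preliminary observation, applying the Composition law at inner index $h$ gives $\gamma_0^r=\gamma_0^h\xcirc\gamma_0^{r-h}$ for every $r>h$, so A1) forces $\gamma_0^i=0$ for all $i\ge h$. The Composition law at inner index $h$ also gives, for $i>h$ and $j<h$,
\[
\gamma_j^i=\sum_{l=0}^{j}\gamma_l^h\xcirc\gamma_{j-l}^{i-h},
\]
which vanishes as soon as the main claim is established (every index $l$ in the sum satisfies $l\le j<h$).

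The heart of the proof is thus to show $\gamma_j^h=0$ for $j=0,1,\dots,h-1$, by induction on $j$. The base case $j=0$ is A1). For the inductive step, assuming $\gamma_l^h=0$ for all $l<j$, the key move is to substitute $x':=\gamma_0^{h-j-1}(x)$ into the assumed product-like identity at $(h,0)$, obtaining
\[
0=\gamma_0^h(ax')=\sum_{i=0}^{n-1}\gamma_i^h(a)\,\gamma_0^i(x').
\]
The Composition law rewrites $\gamma_0^i(x')=\gamma_0^{i+h-j-1}(x)$, which equals $q$ when $i=j$ and vanishes when $i>j$ by the preliminary observation; the inductive hypothesis kills every term with $i<j$. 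What remains is $\gamma_j^h(a)\,q=0$, and right-cancelability of $q$ forces $\gamma_j^h(a)=0$.

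Finally, the consequence $M_{(h)}(ab)=M_{(h)}(a)M_{(h)}(b)$ follows by block matrix multiplication: the established vanishings say the lower-left $(n-h)\times h$ block of each $M(a)$ is zero, so in the product $M(a)M(b)=M(ab)$ (Corollary~\ref{M es multiplicativa}) the top-left $h\times h$ block is precisely $M_{(h)}(a)M_{(h)}(b)$. The principal difficulty is spotting the exponent $h-j-1$ in the definition of $x'$: this is exactly the power of $\gamma_0^1$ applied to $x$ that makes the product-like identity automatically isolate the desired term $\gamma_j^h(a)\,q$, with all competing terms either annihilated by the inductive hypothesis or by the nilpotency encoded in A1).
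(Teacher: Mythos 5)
Your argument is correct and is essentially the paper's own proof: the same reduction to $i=h$ via the Composition law, the same auxiliary elements $\gamma_0^{h-1-j}(x)$ substituted into the identity at $(h,0)$, and the same cancellation of $q$, organized as an induction on $j$ (the paper runs it sequentially through $b_1,b_2,\dots$). Your block-matrix justification of $M_{(h)}(ab)=M_{(h)}(a)M_{(h)}(b)$ is also the intended (though unstated) argument, valid in the situations where the full Product law holds, which is exactly where that consequence is used.
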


\begin{proof} By the Composition law it suffices to check this for $i=h$. For $0\le j <h$, let $b_j:= \gamma_0^{h-1-j}(x)$. Again by the Composition law,
\begin{equation}
\gamma_0^r(b_j) =\begin{cases}q &\text{if $r=j$},\\0&\text{if $r>j$.}\end{cases}\label{eqq6}
\end{equation}
Let $a$ be an arbitrary element of $A$. Since,
$$
0=\gamma^h_0(ab_1)=\sum_{i=0}^{n-1}\gamma_i^h(a)\gamma_0^i(b_1)=\gamma^h_1(a)q,
$$
we have $\gamma^h_1(a)=0$. Then,
$$
0=\gamma^h_0(ab_2)=\sum_{i=0}^{n-1}\gamma_i^h(a)\gamma_0^i(b_2)=\gamma^h_2(a)q,
$$
and so $\gamma^h_2(a)=0$, etcetera.
\end{proof}

\begin{proposition}\label{escalera} Assume that the hypothesis of the previous lemma are fulfilled and let $0< l\le\lfloor n/h\rfloor$. Then, for each $i\ge lh$ and $j< lh$, the map $\gamma^i_j$ vanishes.
\end{proposition}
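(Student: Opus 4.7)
I would prove this by induction on $l$, with the base case $l=1$ being exactly the content of Lemma~\ref{bloque}. The key tool throughout is the Composition law together with the fact (established by Lemma~\ref{bloque}) that $\gamma^h_u = 0$ for every $u < h$.

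For the inductive step, assume the statement holds for some $l$ with $l+1 \le \lfloor n/h\rfloor$; I want to show it for $l+1$. Fix $i \ge (l+1)h$ and $j < (l+1)h$. If $j < lh$, then the induction hypothesis applied to the pair $(i,j)$ (noting $i \ge (l+1)h \ge lh$) yields $\gamma^i_j = 0$ directly, so I may assume $lh \le j < (l+1)h$.

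Now I would split $i = h + (i-h)$ in the Composition law to write
$$
\gamma^i_j \;=\; \sum_{u=0}^{j}\gamma^h_u\xcirc\gamma^{i-h}_{j-u}.
$$
For $u \le j - lh$, the upper index satisfies $u \le j - lh < (l+1)h - lh = h$, so $\gamma^h_u = 0$ by Lemma~\ref{bloque}. For $u > j - lh$, i.e.\ $j - u < lh$, the inductive hypothesis applies to $\gamma^{i-h}_{j-u}$ since $i - h \ge lh$, giving $\gamma^{i-h}_{j-u} = 0$. Every term in the sum is therefore killed by one of the two vanishing statements, and $\gamma^i_j = 0$ follows.

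The only genuinely delicate point is the bookkeeping in the last step, namely verifying that the two index ranges $\{u \le j-lh\}$ and $\{u > j-lh\}$ together cover all $u \in \{0,1,\dots,j\}$ and that in each range one factor of the composition actually vanishes under the right hypothesis. This is why the assumption $l+1 \le \lfloor n/h\rfloor$ (so that $(l+1)h \le n$ and the decomposition $j-lh < h$ is meaningful) plays an essential role; without it the second range of $u$ could reach values where $\gamma^h_u$ need not be zero.
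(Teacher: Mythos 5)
Your proof is correct and follows essentially the same route as the paper: induction on $l$ with Lemma~\ref{bloque} as the base case, and the Composition law written as $\gamma^i_j=\sum_{u=0}^j\gamma^h_u\xcirc\gamma^{i-h}_{j-u}$, observing that one factor in each summand vanishes (your split at $u\le j-lh$ versus $u>j-lh$ is only cosmetically different from the paper's implicit split at $u<h$ versus $u\ge h$). The only quibble is your closing remark: the role of $l+1\le\lfloor n/h\rfloor$ is simply to keep the subscripts below $n$ so the maps $\gamma^i_j$ are defined, not to control where $\gamma^h_u$ vanishes, but this does not affect the validity of the argument.
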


\begin{proof} We proceed by induction on $l$. For $l=1$ the result is the previous lemma. Assuming it is true for $l\ge 1$,
$$
\gamma^i_j=\sum_{u=0}^j\gamma^h_u\xcirc\gamma^{i-h}_{j-u}=0,
$$
for each $i\ge (l+1)h$ and $j< (l+1)h$, as we want, since in each summand $\gamma^h_u\xcirc\gamma^{i-h}_{j-u}$ one of the factors vanishes.
\end{proof}

The previous result can be rephrased by saying that the matrix $M$ has the following shape:
\[
 M=\left(
\begin{array}{l@{} l@{} c}
 \begin{array}{|cccc}
     \ide & 0  & \dots & 0\\
     \gamma^1_0 & \gamma^1_1 & \dots & \gamma^1_{h-1}\\
     \vdots & \vdots & \ddots & \vdots\\
     \gamma^{h-1}_0 & \gamma^{h-1}_1 & \dots & \gamma^{h-1}_{h-1}\\
     \hline
 \end{array}
 &
 \begin{array}{cccccccccccc}
    \hdotsfor{12}\\
    \hdotsfor{12}\\
    & & & &  & \phantom{\vdots}&&&&& & \\
    \hdotsfor{12}\\
 \end{array}
 &
 \begin{array}{c}
    0\\
    \gamma^1_{n-1}\\
    \vdots\\
    \gamma^{h-1}_{n-1}\\
 \end{array}\\
 \begin{array}{cccc}
    0 & 0 & \cdots & 0\\
    \phantom{\gamma^{h-1}_0} & \phantom{\gamma^{h-1}_1} & \phantom{\cdots} & \phantom{\gamma^{h-1}_{h-1}}\\
    & & &
 \end{array} &
 \begin{array}{|cccc}
    \gamma^h_h & \cdots & \gamma^h_{2h-1}\\
    \vdots & \ddots & \vdots\\
    \gamma^{2h-1}_h & \cdots & \gamma^{2h-1}_{2h-1}\\
    \hline
 \end{array}
 \begin{array}{c}
    \, \cdots \\
    \, \cdots \\
 \end{array}
 &
 \begin{array}{c}
    \vdots \\
    \vdots \\
 \end{array} \\
 &
 \begin{array}{ccc}
    0 &  \cdots & 0\\
    \phantom{\gamma^{h-1}_1} & \phantom{\cdots} & \phantom{\gamma^{h-1}_{h-1}}
 \end{array}
 \phantom{n,}
 \begin{array}{|c}
      \\
      \\
 \end{array}
 &
 \\

 &&\\
\begin{array}{cc}
    \vdots & \\
    0 & \cdots \\
 \end{array}
& \begin{array}{cc}
    & \\
    & \\
 \end{array}
 &
 \begin{array}{c}
     \\
    \gamma_{n-1}^{n-1} \\
 \end{array}

\end{array}
 \right)
 \]

\medskip

\begin{proposition}\label{variante de gamma como suma} For each $0\le j < n$, let $\gamma_j^1\colon A\to A$ be a $k$-linear map satisfying \mbox{$\gamma^1_j(1)\!=\delta_{1j}$.} Set $\gamma^0_j:=\delta_{0j} \ide$ and
\begin{equation}\label{aa}
\gamma^r_j:=\sum_{u_1,\dots, u_r\ge 0\atop u_1+\cdots+u_r=j}\gamma^1_{u_1}\xcirc\cdots\xcirc \gamma^1_{u_r}\quad \text{for $r>1$ and $j<n$.}
\end{equation}
If:

\begin{enumerate}

\smallskip

\item There exists $1\le h \le n$ and $x\in A$ such that $\gamma^h_0=0$ and $q:=\gamma^{h-1}_0(x)$ is right cancelable,

\smallskip

\item $\gamma_j^n=0$ for all $j \ge \lfloor n/h\rfloor h$,

\smallskip

\item $\gamma_j^1(ab)=\sum_{i=0}^{n-1}\gamma_i^1(a)\gamma_j^i(b)$ for $a,b\in A$ and $j<n$,

\smallskip

\end{enumerate}
then the family of maps $(\gamma_j^r)_{0\le j,r<n}$ defines a twisting map $s\colon C\ot A\to A\ot C$.
\end{proposition}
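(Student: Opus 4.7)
The plan is to verify the hypotheses of Corollary~\ref{gamma como suma}, i.e.\ that $\gamma^n_j=0$ for all $j<n$. Hypothesis~(2) already provides this in the range $j\ge\lfloor n/h\rfloor h$, so the task reduces to covering $j<\lfloor n/h\rfloor h$, which I will do by invoking Proposition~\ref{escalera} with $l=\lfloor n/h\rfloor$.

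First I would observe that the Composition Law
\[
\gamma^r_j=\sum_{l=0}^j\gamma^i_l\xcirc\gamma^{r-i}_{j-l}\qquad(0\le i\le r)
\]
is a direct combinatorial consequence of the definition~\eqref{aa}: substituting the multinomial expressions for $\gamma^i_l$ and $\gamma^{r-i}_{j-l}$ and regrouping reproduces the defining sum of $\gamma^r_j$. Next I would prove, by induction on $r$, the restricted Product Law
\[
\gamma^r_0(ab)=\sum_{i=0}^{n-1}\gamma^r_i(a)\gamma^i_0(b),\qquad a,b\in A.
\]
The base case $r=1$ is hypothesis~(3) with $j=0$. For the inductive step I would write $\gamma^{r+1}_0=\gamma^1_0\xcirc\gamma^r_0$, apply the inductive hypothesis to $\gamma^r_0(ab)$, apply hypothesis~(3) inside each $\gamma^1_0$, and reindex by $v=k+i$ (using $\gamma^k_0\xcirc\gamma^i_0=\gamma^{k+i}_0$, which holds since $\gamma^s_0=(\gamma^1_0)^s$ by~\eqref{aa}). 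The crucial point is that hypothesis~(1) forces $\gamma^v_0(b)=0$ whenever $v\ge h$; since $h\le n$, this kills all terms with $v\ge n$, and for $v<n$ the inner sum $\sum_{k=0}^v\gamma^1_k\xcirc\gamma^r_{v-k}$ collapses to $\gamma^{r+1}_v$ by the Composition Law, which gives the desired identity at level $r+1$.

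Specializing this identity to $r=h$ and using $\gamma^h_0=0$ verifies the hypothesis of Lemma~\ref{bloque}, so Proposition~\ref{escalera} applies. Taking $l=\lfloor n/h\rfloor$ yields $\gamma^i_j=0$ for $i\ge\lfloor n/h\rfloor h$ and $j<\lfloor n/h\rfloor h$, which, combined with hypothesis~(2), gives $\gamma^n_j=0$ for every $j<n$; Corollary~\ref{gamma como suma} then concludes that $s$ is a twisting map. The main obstacle is the inductive step of the Product Law: the two summation indices coming from hypothesis~(3) and the inductive hypothesis have to be tracked carefully so that the nilpotency $\gamma^h_0=0$ can be used to legitimately truncate what would otherwise be an overextended sum; everything else is a mechanical chaining of results already in the paper.
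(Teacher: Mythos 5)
Your proposal is correct and follows essentially the same route as the paper: the paper likewise reduces everything to showing $\gamma^n_j=0$ for $j<n$, notes that hypothesis~(1) and the Composition law give $\gamma^r_0=0$ for $r\ge h$, proves the restricted Product law $\gamma^r_0(ab)=\sum_{i=0}^{n-1}\gamma^r_i(a)\gamma^i_0(b)$ by the same induction (``arguing as in the last part of the proof of Proposition~\ref{equivalencias de ser un twisting map}''), and then invokes Proposition~\ref{escalera} together with hypothesis~(2). Your write-up simply makes explicit the reindexing and truncation step that the paper leaves implicit, and your use of Corollary~\ref{gamma como suma} is just a packaged form of the paper's direct appeal to Proposition~\ref{equivalencias de ser un twisting map}.
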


\begin{proof} By the hypothesis, and the definition of the maps $\gamma^0_j$, it is evident that items~(3)(b) and~(3)(c) of Proposition~\ref{equivalencias de ser un twisting map} hold. Moreover, using~\eqref{aa} it is easy to check that the maps $\gamma^r_j$ satisfy the Composition law (and, in particular, item~(3)(d) of the same proposition). Hence, in order to finish the proof we only need to check that $\gamma^n_j=0$ for $0\le j< n$. But

\begin{itemize}

\smallskip

\item[(i)] From item~(1) and the Composition law it follows that $\gamma^r_0=0$ for $r\ge h$,

\smallskip

\item[(ii)] Using (i) and arguing as in the last part of the proof of Proposition~\ref{equivalencias de ser un twisting map}, we check that
$$
\gamma^r_0(aa')=\sum_{i=0}^{n-1} \gamma^r_i(a)\gamma^i_0(a')\quad\text{for all $r> 0$ and $a,a'\in A$.}
$$
\smallskip

\end{itemize}
Hence $\gamma^n_j=0$ for $j < \lfloor n/h\rfloor h$, by Proposition~\ref{escalera}. By item~(2) this finish the proof.
\end{proof}

\begin{proposition}\label{gamma^{lh+r}_lh(b_j)} Under the hypothesis of Lemma~\ref{bloque}, the elements $b_j\!:=\!\gamma_0^{h-1-j}(x)$ ($j=0,\dots,h-1$), introduced in the proof of that result, satisfy
$$
\gamma^{lh+r}_{lh}(b_j)=\begin{cases} \gamma^{lh}_{lh} (b_{j-r}) &\text{if $r\le j$,}\\ 0 & \text{otherwise,}
\end{cases}
$$
for $0\le l \le \lfloor \frac{n-1}{h} \rfloor$.
\end{proposition}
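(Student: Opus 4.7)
My plan is to compute $\gamma^{lh+r+h-1-j}_{lh}(x)$ in two different ways using the Composition law, and equate the results. In each of the two expansions, Proposition~\ref{escalera} will collapse the sum to a single term, and the identity will then follow by inspecting a simple one-variable quantity.

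First, splitting $lh + r + h - 1 - j = (lh+r) + (h-1-j)$, the Composition law gives
$$
\gamma^{lh+r+h-1-j}_{lh} = \sum_{u=0}^{lh} \gamma^{lh+r}_u \xcirc \gamma^{h-1-j}_{lh-u}.
$$
Since $l \le \lfloor (n-1)/h \rfloor \le \lfloor n/h \rfloor$, Proposition~\ref{escalera} applied with parameter $l$ (vacuous when $l=0$) gives $\gamma^{lh+r}_u = 0$ for every $u < lh$. Hence only the term $u = lh$ survives, and evaluating at $x$ yields $\gamma^{lh+r}_{lh}(b_j)$.

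Next, splitting instead as $lh + (r + h - 1 - j)$, the Composition law gives
$$
\gamma^{lh+r+h-1-j}_{lh} = \sum_{u=0}^{lh} \gamma^{lh}_u \xcirc \gamma^{r+h-1-j}_{lh-u},
$$
and the same argument (using $\gamma^{lh}_u = 0$ for $u < lh$) reduces this to $\gamma^{lh}_{lh} \xcirc \gamma^{r+h-1-j}_0$. Evaluating at $x$, and equating with the previous expression, we obtain
$$
\gamma^{lh+r}_{lh}(b_j) = \gamma^{lh}_{lh}\bigl(\gamma^{r+h-1-j}_0(x)\bigr).
$$

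It remains to analyse $\gamma^{r+h-1-j}_0(x)$. If $r > j$, then $r + h - 1 - j \ge h$, and the Composition law together with the hypothesis $\gamma^h_0 = 0$ gives $\gamma^{r+h-1-j}_0 = \gamma^{r-1-j}_0 \xcirc \gamma^h_0 = 0$, so the right-hand side vanishes. If instead $r \le j$, then $r + h - 1 - j = h - 1 - (j-r)$ with $0 \le j-r \le h-1$, so $\gamma^{r+h-1-j}_0(x) = b_{j-r}$, and we obtain $\gamma^{lh+r}_{lh}(b_j) = \gamma^{lh}_{lh}(b_{j-r})$, as required.

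No substantial obstacle is expected: the entire argument is the ``two-splittings trick'' for the Composition law together with the vanishing pattern already established in Proposition~\ref{escalera}. The one point that requires mild attention is checking that Proposition~\ref{escalera} applies for the value of $l$ in question, which is immediate from $\lfloor (n-1)/h \rfloor \le \lfloor n/h \rfloor$, and that the case $l=0$ is handled by the vacuous (single-term) reduction.
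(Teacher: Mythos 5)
Your proof is correct and uses essentially the same machinery as the paper: the Composition law together with Proposition~\ref{escalera} to collapse the sum to the single term $u=lh$, followed by $\gamma^h_0=0$ and the definition of the $b_j$'s. The paper is just slightly more direct—it applies the Composition law once to $\gamma^{lh+r}_{lh}$ evaluated at $b_j$, getting $\gamma^{lh+r}_{lh}(b_j)=\gamma^{lh}_{lh}\bigl(\gamma^r_0(b_j)\bigr)$ and concluding from~\eqref{eqq6}—so your first expansion of $\gamma^{lh+r+h-1-j}_{lh}(x)$ is a harmless extra step rather than a different idea.
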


\begin{proof} By the Composition law and Proposition~\ref{escalera},
$$
\gamma^{lh+r}_{lh}(b_j)= \sum_{u=0}^{lh}\gamma_{u}^{lh}\bigl(\gamma^r_{lh-u}(b_j)\bigr) = \gamma^{lh}_{lh} \bigl(\gamma^r_0(b_j)\bigr).
$$
The assertion follows now from the definition of the $b_i$'s and equality~\eqref{eqq6}.
\end{proof}

\begin{proposition}\label{h divide a n} Let $s\colon C\ot A \to A \ot C$ be a twisting map that satisfies~A1). If $q$ is invertible, then $h$ divides $n$.
\end{proposition}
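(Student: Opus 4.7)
My plan is to reduce to a contradiction by taking the standard division with remainder $n = lh + r$ with $0 \le r < h$ and showing $r > 0$ is impossible.

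First I would invoke Remark~\ref{si q es inversible puede tomars=1} to assume without loss of generality that $q = 1$, which means $b_0 = q = 1$ in the notation $b_j := \gamma^{h-1-j}_0(x)$ introduced in the proof of Lemma~\ref{bloque}. I would also observe that necessarily $h \le n$, since if $h > n$ then $h-1 \ge n$ forces $\gamma^{h-1}_0 = 0$ by convention, contradicting that $q$ is invertible (unless $A=0$). In particular $l \ge 1$, and writing $n-1 = lh + (r-1)$ shows $\lfloor (n-1)/h\rfloor = l$ whenever $0 < r < h$, so the range condition of Proposition~\ref{gamma^{lh+r}_lh(b_j)} is satisfied.

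Next, suppose for contradiction that $r > 0$. Since $s$ is a twisting map on $C = k[y]/\langle y^n\rangle$ we have $\gamma^n_j = 0$ for every $j < n$; in particular $\gamma^n_{lh} = 0$ (using $lh = n - r < n$). Applying this to $b_r$ and then applying Proposition~\ref{gamma^{lh+r}_lh(b_j)} with $j = r$ (the case $r \le j$ with equality), we get
\begin{equation*}
0 = \gamma^n_{lh}(b_r) = \gamma^{lh+r}_{lh}(b_r) = \gamma^{lh}_{lh}(b_{r-r}) = \gamma^{lh}_{lh}(b_0) = \gamma^{lh}_{lh}(1).
\end{equation*}
On the other hand, by item~(2)(b) of Proposition~\ref{equivalencias de ser un twisting map}, $\gamma^{lh}_{lh}(1) = \delta_{lh,lh} = 1$ (this uses $lh < n$, which is precisely our standing assumption $r > 0$). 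This contradiction forces $r = 0$, i.e.\ $h$ divides $n$.

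The proof is essentially mechanical once the right objects are identified: the main conceptual point, which I expect to be the one small obstacle, is spotting that Proposition~\ref{gamma^{lh+r}_lh(b_j)} is exactly the tool needed to convert the vanishing of $\gamma^n_{lh}$ into a statement about $\gamma^{lh}_{lh}(1)$ via the normalisation $b_0 = q = 1$, thereby clashing with~(2)(b). The normalisation step (reducing to $q = 1$) is what makes the argument clean; without it one would instead conclude $\gamma^{lh}_{lh}(q) = 0$, which is harder to exploit unless one uses right $B$-linearity and invertibility of $q$ more delicately.
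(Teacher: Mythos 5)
Your proof is correct and follows essentially the same route as the paper: after normalising $q=1$ via Remark~\ref{si q es inversible puede tomars=1}, the paper also applies Proposition~\ref{gamma^{lh+r}_lh(b_j)} together with Proposition~\ref{equivalencias de ser un twisting map} to the element $b_{n-lh}$ with $l=\lfloor n/h\rfloor$, obtaining $\gamma^n_{lh}(b_{n-lh})=\gamma^{lh}_{lh}(1)=1$ and contradicting $\gamma^n_{lh}=0$. Your write-up just makes explicit the bookkeeping (that $\lfloor (n-1)/h\rfloor=l$ when $h\nmid n$, and that $lh<n$), which the paper leaves implicit.
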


\begin{proof} By Remark 2.1 we can assume $q=1$. Let $j\in \{0,\dots, h-1\}$. By Proposition~\ref{gamma^{lh+r}_lh(b_j)} and Proposition~\ref{equivalencias de ser un twisting map}, we know that $\gamma^{lh+j}_{lh}(b_j)=\gamma^{lh}_{lh}(1)=1$. But, if $h\nmid n$, then the case $j=n-lh$, with $l=\lfloor\frac{n}{h}\rfloor$, leads to $\gamma^n_{lh}(b_j)=1$, which is impossible, since $\gamma^n_{lh}=0$.
\end{proof}

\begin{lemma}\label{basis} Let $D$ be a $k$-algebra, and let $g\colon D\to D$ be a $k$-linear map. Assume that $g^h=0$ and that there exists $x\in D$ such that $q:=g^{h-1}(x)$ is invertible. Suppose also that $E:=\ker g$ is a $k$-subalgebra of $D$ and $g$ is a right $E$-linear map. Then $D$ is a free right $E$-module of rank $h$. Moreover  $\mathfrak{B}:= \{x,g(x),\dots,g^{h-1}(x)\}$ is a basis.
\end{lemma}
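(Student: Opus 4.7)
Here is how I would tackle the lemma.

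\smallskip

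\noindent\textbf{Preliminary observation.} First I would verify that $q^{-1}$ actually lies in $E$. We have $q=g^{h-1}(x)\in E$, since $g(q)=g^h(x)=0$. Writing $1=q^{-1}q$ and applying $g$, right $E$-linearity (together with $g(1)=0$, which holds because $1\in E$) gives $0=g(q^{-1})\,q$, and since $q$ is invertible we conclude $q^{-1}\in E$. This is the small but essential gadget that lets me use $q^{-1}$ as a coefficient later.

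\smallskip

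\noindent\textbf{Linear independence.} Suppose $\sum_{i=0}^{h-1} g^i(x)\,e_i = 0$ with $e_i\in E$. Applying $g^{h-1}$ and using right $E$-linearity of $g$ (hence of every $g^j$), every term with $i\ge 1$ dies because $g^{h-1+i}(x)=0$, leaving $q\,e_0=0$, so $e_0=0$. Now apply $g^{h-2}$ to what remains: the same reasoning yields $q\,e_1=0$, hence $e_1=0$. Iterating gives $e_i=0$ for all $i$.

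\smallskip

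\noindent\textbf{Spanning.} The main step is to show that every $d\in D$ can be written as $d=\sum_{i=0}^{h-1} g^i(x)\,e_i$ with $e_i\in E$. I would do this by a ``Gauss elimination from the top'' procedure: set $d_0:=d$, and recursively define
$$
e_i := q^{-1}\,g^{h-1-i}(d_i), \qquad d_{i+1} := d_i - g^i(x)\,e_i.
$$
The key invariant, proved by induction on $i$, is
$$
g^{h-1-j}(d_i) = 0 \quad\text{for all } 0\le j < i.
$$
At each step one must check two things: (a) that $g^{h-1-i}(d_i)\in E$, so that $e_i\in E$ makes sense; and (b) that the new $d_{i+1}$ extends the vanishing invariant by one. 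For (a), the induction hypothesis with $j=i-1$ gives $g^{h-i}(d_i)=0$, which is exactly $g(g^{h-1-i}(d_i))=0$. For (b), one expands $g^{h-1-j}(d_{i+1})=g^{h-1-j}(d_i)-g^{h-1-j+i}(x)\,e_i$; for $j<i$ the second term vanishes because the exponent on $x$ is $\ge h$, and for $j=i$ the two terms cancel thanks to the defining formula of $e_i$ together with $q\,e_i=g^{h-1-i}(d_i)$. After $h$ iterations, $d_h$ is killed by $g^0,\dots,g^{h-1}$, so $d_h=0$, giving the desired expansion.

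\smallskip

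\noindent\textbf{Where the work lies.} The genuinely delicate point is not the algebra but keeping track of which elements land in $E$, since we only have right $E$-linearity of $g$ (not left). Everything in the proof — the auxiliary fact $q^{-1}\in E$, the choice to strip off coefficients from the $g^{h-1}(x)$-side downward, and the verification that each $g^{h-1-i}(d_i)$ lies in $E$ — is arranged so that multiplication by coefficients is always on the right. Once this bookkeeping is set up correctly, both independence and spanning fall out of the same $h$-step descent.
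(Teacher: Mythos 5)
Your proposal is correct and follows essentially the same route as the paper: independence by applying $g^{h-1},g^{h-2},\dots$ in turn, the observation $q^{-1}\in E$ via $g(q^{-1})q=g(1)=0$, and spanning by the same elimination — your $d_i$ and $e_i=q^{-1}g^{h-1-i}(d_i)$ are exactly the paper's $a_i$ and $\lambda_i=q^{-1}g^{h-i-1}(a_i)$, just with the induction phrased as a vanishing invariant rather than as an identity for $g^{h-i}(a)$. Nothing is missing.
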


\begin{proof} Consider a null combination
$$
\sum_{i=0}^{h-1}g^i(x)\lambda_i=0,
$$
with coefficients in $E$. Applying $g^{h-1}$ to both sizes of this equality, we see that $g^{h-1}(x)\lambda_0 =q\lambda_0=0$. Hence, $\lambda_0=0$. Now, applying successively $g^{h-2},\dots, g^1$, we get $\lambda_1=0, \dots, \lambda_{h-1}=0$. So $\mathfrak{B}$ is linearly independent. It remains to check that $\mathfrak{B}$ generates $D$ as a right $E$-module. Note that $q^{-1}\in E$, because $g(q^{-1})q=g(1)=0$. We will prove by induction on $i$ that there exist $\lambda_0,\dots, \lambda_{i-1}\in E$, such that
\begin{equation}
g^{h-i}(a) = \sum_{j=0}^{i-1} g^{h-i+j}(x)\lambda_j \quad \text{for all $a\in D$ and $i=0,\dots,h$.}\label{eqq7}
\end{equation}
The case $i=0$ is trivial, since $g^h(a) = 0$ and on the right side of~\eqref{eqq7} we have the empty sum (which gives $0$). Assume that~\eqref{eqq7} holds for a fixed $i<h$ and set
\begin{equation}
a_i:= a - x\lambda_0 - g(x)\lambda_1-\cdots-g^{i-1}(x)\lambda_{i-1}.\label{eqq8}
\end{equation}
From~\eqref{eqq7} it follows immediately that $g^{h-i}(a_i) = 0$. Hence $g^{h-i-1}(a_i)\in E$, which implies that $\lambda_i = q^{-1}g^{h-i-1}(a_i)\in E$. Consequently, by~\eqref{eqq8},
$$
g^{h-i-1}(a) = g^{h-i-1}(x)\lambda_0 + g^{h-i}(x)\lambda_1+\cdots+g^{h-2}(x)\lambda_{i-1}+ g^{h-1}(x)\lambda_i,
$$
since $g^{h-i-1}(a_i) = q\lambda_i = g^{h-1}(x)\lambda_i$.
\end{proof}

\begin{theorem}\label{A es libre sobre B} Let $s\colon C\ot A \to A \ot C$ be a twisting map that satisfies~A1). If $q$ is invertible, then $A$ is a right free $B$-module. Furthermore
$$
\mathfrak{B}:=\{x,\gamma^1_0(x),\dots,\gamma^{h-1}_0(x)\}
$$
is a basis.
\end{theorem}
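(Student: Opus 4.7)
The plan is to reduce the theorem to a direct application of Lemma~\ref{basis}, taking $D:=A$, $g:=\gamma^1_0$ and $E:=B=\ker\gamma^1_0$. All the hypotheses of that lemma are already available from material in the excerpt, so the proof amounts to a short verification that each of them is in force.

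First, I would record the identification $g^i = (\gamma^1_0)^i = \gamma^i_0$, which is precisely the content of Remark~\ref{formula para gamma_k^j} in the specialization where all the indices $u_1,\dots,u_i$ are forced to be $0$. In particular, assumption~A1) gives $g^h=\gamma^h_0=0$, and the element $q=\gamma^{h-1}_0(x)=g^{h-1}(x)$ is invertible by the extra hypothesis of the theorem. This matches the nilpotency and cyclic generator assumptions of Lemma~\ref{basis}.

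Next, I would appeal to Remark~\ref{ker gamma^1_0} to obtain the two remaining hypotheses: $B=\ker\gamma^1_0$ is a $k$-subalgebra of $A$, and $\gamma^1_0$ is right $B$-linear. With these in place Lemma~\ref{basis} applies verbatim and yields that $A$ is free of rank $h$ as a right $B$-module, with basis
\[
\mathfrak{B}=\{x,g(x),\dots,g^{h-1}(x)\}=\{x,\gamma^1_0(x),\dots,\gamma^{h-1}_0(x)\},
\]
which is exactly the statement of the theorem.

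There is no real obstacle here, since the genuine work (the inductive argument producing the expansion $a=\sum_{j=0}^{h-1}g^j(x)\lambda_j$ out of the invertibility of $q$ together with right $E$-linearity of $g$) has already been carried out in Lemma~\ref{basis}. The only thing to be a bit careful about is to make sure that we invoke the invertibility hypothesis on $q$ (rather than merely its right cancelability from~A1)), since this is what is needed both in the linear independence step and, via $q^{-1}\in E$, in the spanning step of the lemma.
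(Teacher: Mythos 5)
Your proposal is correct and is exactly the paper's argument: the paper's proof consists of the single line ``Apply the previous lemma with $D=A$, $E=B$ and $g=\gamma^1_0$,'' i.e.\ a direct application of Lemma~\ref{basis}. Your additional checks (that $g^i=\gamma^i_0$ via Remark~\ref{formula para gamma_k^j}, and that $B$ is a subalgebra with $\gamma^1_0$ right $B$-linear via Remark~\ref{ker gamma^1_0}) are just the implicit verifications the paper leaves to the reader.
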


\begin{proof} Apply the previous lemma with $D=A$, $E=B$ and $g=\gamma^1_0$.
\end{proof}

\begin{corollary}\label{para dimension finita} Let $k$ be a field and let $A$ be a finite dimensional $k$-algebra. If there exists a twisting map $s\colon C\ot A\to A\ot C$ that satisfies Condition~A1), then $dim_k(A)=h\cdot dim_k(B)$.
\end{corollary}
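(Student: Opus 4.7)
The plan is to reduce Condition~A1) (right cancelability of $q$) to the stronger hypothesis of Theorem~\ref{A es libre sobre B} (invertibility of $q$), and then simply take $k$-dimensions.

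The first step, which is really the only substantive one, is to observe that in a finite-dimensional $k$-algebra every right cancelable element is invertible. Indeed, right multiplication $\rho_q\colon A\to A$, $a\mapsto aq$, is a $k$-linear endomorphism whose kernel is $\{a\in A : aq=0\}$. Right cancelability of $q$ forces $\ker\rho_q = 0$, so by finite dimensionality $\rho_q$ is an isomorphism. Choose $e\in A$ with $eq=1$. Then
\[
(qe-1)q = qeq - q = q\cdot 1 - q = 0,
\]
and a second application of right cancelability yields $qe=1$. Hence $q$ is a two-sided unit in $A$.

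Once $q$ is invertible, Theorem~\ref{A es libre sobre B} applies and tells us that $A$ is a free right $B$-module with basis $\{x,\gamma^1_0(x),\ldots,\gamma^{h-1}_0(x)\}$. In particular $A\cong B^{h}$ as right $B$-modules, hence as $k$-vector spaces. Taking $k$-dimensions gives the desired equality $\dim_k(A) = h\cdot\dim_k(B)$.

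There is essentially no obstacle: the only content is the passage from ``right cancelable'' to ``invertible'', which is standard in finite dimension and allows us to invoke Theorem~\ref{A es libre sobre B} directly.
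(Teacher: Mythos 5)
Your proof is correct and follows the same route as the paper: reduce to Theorem~\ref{A es libre sobre B} by noting that in a finite-dimensional algebra a right cancelable element is invertible, then count dimensions using the free basis of size $h$. The only difference is that you spell out the standard injective-implies-bijective argument for $\rho_q$, which the paper leaves implicit.
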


\begin{proof} This follows from Theorem~\ref{A es libre sobre B}, since in a finite dimensional $k$-algebra each right cancelable element $q$ is invertible.
\end{proof}

With the only exception of Lemma~\ref{diagonales}, in the rest of the results of this section $s$ is a twisting map that satisfies conditions~A1) and~A2).

\begin{lemma}\label{propiedadpolinomios} If $b\in B$, then $M_{(h)}(b)=bI_h$.
\end{lemma}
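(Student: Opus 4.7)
The goal is to establish $M_{(h)}(b)_{ij} = \gamma^i_j(b) = b\delta_{ij}$ for every $b \in B$ and $0 \le i,j < h$. The column $j = 0$ is immediate: $\gamma^0_0(b) = b$ and, since $\gamma^r_0$ is the $r$-fold composition of $\gamma^1_0$ with itself (Remark~\ref{formula para gamma_k^j}), $\gamma^i_0(b) = 0$ for $i \ge 1$. For the remaining columns the plan has two steps: first upgrade the right $B$-linearity of $\gamma^1_0$ (Remark~\ref{ker gamma^1_0}) to left $B$-linearity of every $\gamma^1_j$ with $0 \le j < h$, and then bootstrap to general $\gamma^i_j$ via the Composition law.

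For the first step, fix $b \in B$ and $a, c \in A$ and compute $\gamma^1_0((ba)c)$ in two ways. The Product law gives $\gamma^1_0((ba)c) = \sum_i \gamma^1_i(ba)\gamma^i_0(c)$. By associativity this also equals $\gamma^1_0(b(ac))$, which by~A2 and a second application of the Product law equals $\sum_i b\gamma^1_i(a)\gamma^i_0(c)$. Subtracting,
$$
\sum_{i=0}^{n-1}\bigl(\gamma^1_i(ba) - b\gamma^1_i(a)\bigr)\gamma^i_0(c) = 0.
$$
I then specialize $c$ to $b_j := \gamma^{h-1-j}_0(x)$ for $j = 0, 1, \dots, h-1$. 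Since $\gamma^i_0(b_j) = b_{j-i}$ when $i \le j$ and vanishes otherwise (with $b_0 = q$), the sum collapses to
$$
\sum_{i=0}^{j}\bigl(\gamma^1_i(ba) - b\gamma^1_i(a)\bigr) b_{j-i} = 0.
$$
A finite induction on $j$, using the right cancelability of $q = b_0$ at each stage to peel off the top term, yields $\gamma^1_j(ba) = b\gamma^1_j(a)$ for every $0 \le j < h$. Evaluating at $a = 1$ produces the row $i = 1$ of $M_{(h)}(b)$: $\gamma^1_j(b) = b\delta_{1j}$.

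For the second step I induct on $i$. Assuming $\gamma^{i-1}_{j'}(b) = b\delta_{i-1, j'}$ for all $j' < h$, the Composition law gives $\gamma^i_j(b) = \sum_{l=0}^{j} \gamma^1_l(\gamma^{i-1}_{j-l}(b))$. By the inductive hypothesis only the term with $l = j - i + 1$ can survive, and when it does it equals $\gamma^1_{j-i+1}(b)$, which by the first step is $b\delta_{1, j-i+1} = b\delta_{ij}$. The delicate point of the argument is the first step: hypothesis~A2 only directly constrains $\gamma^1_0$, and the leverage that promotes it to left $B$-linearity of every higher $\gamma^1_j$ is obtained by probing the displayed identity against the ``diagonal'' family $(b_j)$ while repeatedly cancelling $q$ on the right.
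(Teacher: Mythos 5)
Your argument is correct, and it reaches the conclusion by a slightly different route than the paper, so a comparison is worth making. The paper's proof works column by column: it applies the multiplicativity $M_{(h)}(bb_j)=M_{(h)}(b)M_{(h)}(b_j)$ from Lemma~\ref{bloque}, reads off the $(i,0)$-entries for all rows $i$ at once, and uses only the left $B$-linearity of the powers $\gamma^i_0$ (coming from~A2) together with the triangular values $\gamma^i_0(b_j)$ and right cancelability of $q$ to determine the columns of $M_{(h)}(b)$ successively. You instead first establish the stronger intermediate statement $\gamma^1_j(ba)=b\gamma^1_j(a)$ for all $a\in A$ and $0\le j<h$ -- in effect the first-row part of Proposition~\ref{linealidad de los gamma_ij sobre el nucleo}, obtained here independently of the lemma -- by comparing the two Product-law expansions of $\gamma^1_0\bigl((ba)b_j\bigr)$ and $\gamma^1_0\bigl(b(ab_j)\bigr)$ and peeling off terms with the cancelable element $q=b_0$; evaluation at $a=1$ then gives row $1$ of $M_{(h)}(b)$, and the Composition law $\gamma^i_j=\sum_{l}\gamma^1_l\xcirc\gamma^{i-1}_{j-l}$ propagates this formally to the remaining rows. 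The underlying engine (probing against the family $b_j=\gamma^{h-1-j}_0(x)$, the identities $\gamma^i_0(b_j)=b_{j-i}$ for $i\le j$ and $\gamma^i_0(b_j)=0$ for $i>j$, and an induction that cancels $q$ at each stage) is the same in both proofs. What your version buys is that the left $B$-linearity of the maps $\gamma^1_j$ on all of $A$ comes out as a byproduct rather than as a later consequence of the lemma, and the rows $i\ge 2$ are handled by a purely formal Composition-law induction; what the paper's version buys is a single uniform matrix computation that never needs that extra propagation step. All the ingredients you invoke (Product law, Composition law, $\gamma^1_j(1)=\delta_{1j}$, and only the right cancelability of $q$, not its invertibility) are available under the section's standing hypotheses, so there is no gap.
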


\begin{proof} When $h=1$, then $M_{(h)}=\gamma^0_0=\ide_A$, and the result is trivial. Assume that $h>1$. Note that $\gamma^1_0(b)=0$ implies $\gamma_0^i(b)=0$ for all $i>0$. Let $b_j$ ($j=1,\dots,h-1$) be as in Lemma~\ref{bloque}. Consider the matrix
$$
M_{(h)}(b_1):=
\begin{pmatrix}
b_1 & 0 &\dots & 0\\
q &\gamma^1_1(b_1) &\dots &\gamma^1_{h-1}(b_1)\\
0 &\vdots &\ddots &\vdots\\
0 &\gamma^{h-1}_1(b_1) &\dots &\gamma^{h-1}_{h-1}(b_1)
\end{pmatrix}.
$$
By Condition~A2) and Lemma~\ref{bloque},
$$
bq= b \gamma^1_0(b_1) = \gamma^1_0(bb_1) = \bigl(M_{(h)}(b)M_{(h)}(b_1)\bigr)_{10} = \gamma^1_1(b)q,
$$
and so $\gamma^1_1(b)=b$, by Condition~A1). The same matrix product at the entries $(j,0)$ for $j=2,\dots,h-1$, combined with the facts that $\gamma^j_0(b_1)=0$ and $\gamma^j_0$ is left $B$-linear, yields $\gamma^j_1(b)q=0$, and so $\gamma^j_1(b)=0$. Now, since $\gamma^0_0(b)=\gamma^1_1(b)=b$ and $\gamma^j_i(b)=0$ for $i=0,1$ and $j\ne i$, the equalities
$$
b\gamma^j_0(b_2)=M_{(h)}(bb_2)_{j0}=\bigl(M_{(h)}(b)M_{(h)}(b_2)\bigr)_{j0} \qquad{j=1,\dots,h-1},
$$
give $\gamma^2_2(b)=b$ and $\gamma^j_2(b)= 0$ for $j\ne 2$. Proceeding in the same way successively with $M_{(h)}(b_3),\dots, M_{(h)}(b_{n-1})$, we obtain the desired result.
\end{proof}

\begin{proposition}\label{linealidad de los gamma_ij sobre el nucleo} For $i,j=0,\dots,h-1$, the map $\gamma^i_j\colon A\to A$ is left and right $B$-linear.
\end{proposition}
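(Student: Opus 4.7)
The plan is to read both the left and right $B$-linearity of each $\gamma^i_j$ (for $0\le i,j\le h-1$) directly off the matrix identity
\[
M_{(h)}(ab)=M_{(h)}(a)M_{(h)}(b),
\]
which is available to us by Lemma~\ref{bloque} (whose hypotheses---the Composition law, Condition~A1), and the $h$-th Product law---are all supplied by $s$ being a twisting map satisfying~A1)), combined with the explicit form $M_{(h)}(b)=bI_h$ for $b\in B$ furnished by Lemma~\ref{propiedadpolinomios}.

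Concretely, I would proceed as follows. Fix $b\in B$ and $a\in A$. For left $B$-linearity, compute
\[
\gamma^i_j(ba)=M_{(h)}(ba)_{ij}=\bigl(M_{(h)}(b)M_{(h)}(a)\bigr)_{ij}=\sum_{k=0}^{h-1}\delta_{ik}\,b\,\gamma^k_j(a)=b\,\gamma^i_j(a),
\]
using Lemma~\ref{propiedadpolinomios} at the second-to-last step. For right $B$-linearity, do the symmetric computation
\[
\gamma^i_j(ab)=M_{(h)}(ab)_{ij}=\bigl(M_{(h)}(a)M_{(h)}(b)\bigr)_{ij}=\sum_{k=0}^{h-1}\gamma^i_k(a)\,\delta_{kj}\,b=\gamma^i_j(a)\,b.
\]

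There really is no obstacle: all the work was already done in establishing that the diagonal block $M_{(h)}$ is multiplicative (Lemma~\ref{bloque}) and that $M_{(h)}$ evaluated on $B$ is scalar (Lemma~\ref{propiedadpolinomios}). The present proposition is simply the observation that a matrix product in which one factor is a scalar multiple of the identity acts as scalar multiplication on each entry of the other factor, with the side on which the scalar sits matching the side of the multiplication. If anything deserves a remark, it is that the computation requires Condition~A1) (via Lemma~\ref{bloque}) and Condition~A2) (via Lemma~\ref{propiedadpolinomios}), so the result is genuinely tied to the standing assumptions of the section rather than being a formal consequence of $s$ being a twisting map.
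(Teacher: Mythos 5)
Your proof is correct and follows essentially the same route as the paper: both read the left and right $B$-linearity of $\gamma^i_j$ directly off the identity $M_{(h)}(ab)=M_{(h)}(a)M_{(h)}(b)$ from Lemma~\ref{bloque} together with $M_{(h)}(b)=bI_h$ from Lemma~\ref{propiedadpolinomios}. The only cosmetic difference is that the paper writes out only the left-linearity computation and declares the right one ``similar,'' whereas you spell out both sides explicitly.
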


\begin{proof} We only check the left linearity, since the right one is similar. Let $b\in B$ and $a\in A$. By Lemmas~\ref{bloque} and~\ref{propiedadpolinomios},
$$
\gamma^i_j(ba) =\bigl(M_{(h)}(b)M_{(h)}(a)\bigr)_{ij} =\bigl(bI_hM_{(h)}(a)\bigr)_{ij} = bM_{(h)}(a)_{ij} = b\gamma^i_j(a),
$$
as we want.
\end{proof}

\begin{proposition}\label{triangular} For each $b\in B$ the matrix $M(b)$ is upper triangular. Moreover,
$$
\gamma^{lh}_{lh+u}(b)=\gamma^{lh+1}_{lh+u+1}(b)=\cdots =\gamma^{lh+h-u-1}_{lh+h-1}(b)\quad\text{for $l<\left\lfloor \frac{n}{h}\right\rfloor$ and $u<h$,}
$$
and
$$
\gamma^{\lfloor n/h\rfloor h}_{\lfloor n/h\rfloor h+u}(b)=\cdots =\gamma^{n-u-1}_{n-1}(b)\quad \text{for $u<n-\left\lfloor \frac{n}{h}\right\rfloor h$ if $h$ does not divide $n$.}
$$
\end{proposition}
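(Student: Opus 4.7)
The plan is to reduce each entry of a diagonal block of $M(b)$ to an entry of its top row by a single application of the Composition law with the splitting $i=lh$, and then evaluate everything inside the first block using Lemma~\ref{propiedadpolinomios}. Entries of $M(b)$ lying below the block diagonal of $M$ (indices $\gamma^{lh+a}_{l'h+c}(b)$ with $l>l'$) vanish directly by Proposition~\ref{escalera}, and the top-left block ($l=0$) is handled by Lemma~\ref{propiedadpolinomios}, which gives $M_{(h)}(b)=bI_h$; this yields both the upper triangularity and the constant-diagonal property in that block.

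For a diagonal block with $l\ge 1$, fix $a$ with $1\le a<h$ (or $a<n-lh$ for the partial block) and $c$ with $0\le c<h$ (respectively $c<n-lh$). The Composition law with $r=lh+a$, $j=lh+c$, and $i=lh$ gives
\[
\gamma^{lh+a}_{lh+c}(b)=\sum_{u=0}^{lh+c}\gamma^{lh}_u\bigl(\gamma^a_{lh+c-u}(b)\bigr).
\]
Proposition~\ref{escalera} kills the summands with $u<lh$, so after reindexing $u=lh+w$ the sum runs over $0\le w\le c$. Since $a$ and $c-w$ are both smaller than $h$, Lemma~\ref{propiedadpolinomios} yields $\gamma^a_{c-w}(b)=b\,\delta_{a,\,c-w}$. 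Only the index $w=c-a$ can contribute, and it is admissible exactly when $a\le c$; hence
\[
\gamma^{lh+a}_{lh+c}(b)=\begin{cases}\gamma^{lh}_{lh+(c-a)}(b)&\text{if }a\le c,\\ 0&\text{if }a>c.\end{cases}
\]
The case $a>c$ gives upper triangularity of the $l$-th diagonal block, and the case $a\le c$, after setting $u:=c-a$, is exactly the announced chain of equalities (the value at $a=0$ is tautological and closes the chain on the left). The partial last block is covered by the same computation, replacing $h$ by $n-\lfloor n/h\rfloor h$ as the upper bound on $a$ and $c$.

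I do not anticipate any real obstacle. The only delicate point is the choice of the cut $i=lh$ in the Composition law: it is the unique splitting that throws the inner factor $\gamma^a_{c-w}(b)$ into the square $\{0,\ldots,h-1\}^2$ where Lemma~\ref{propiedadpolinomios} evaluates $M(b)$ explicitly, while keeping the outer factor $\gamma^{lh}_{\,\cdot\,}$ small enough to land on the top row of the $l$-th block.
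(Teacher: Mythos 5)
Your proof is correct and follows essentially the same route as the paper's: both cut the upper index at $lh$ via the Composition law, discard the summands whose outer factor $\gamma^{lh}_{j}$ has $j<lh$ using Proposition~\ref{escalera}, and evaluate the inner factor inside one block with Lemma~\ref{propiedadpolinomios} ($M_{(h)}(b)=bI_h$), with the below-block entries handled directly by Proposition~\ref{escalera}. The only difference is cosmetic: you merge the triangularity statement and the chain of equalities into a single computation, whereas the paper runs the same computation twice with slightly different index bookkeeping.
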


\begin{proof} In order to check that $M(b)$ is upper triangular it suffices to verify that $\gamma^{lh+i}_{lh+j}(b)=0$ for $l\ge 0$, $0\le j< h$ and $i>j$. If $i\ge h$, this follows from Pro\-position~\ref{escalera}. So, we can assume that $j<i<h$. Let $i':=lh+i$ and $j':=lh+j$. We have
$$
\gamma^{i'}_{j'}(b) = \sum_{v=0}^{i-1}\gamma^{lh}_{j'-v}\bigl(\gamma^i_v(b)\bigr)+\sum_{v=i}^{u'} \gamma^{lh}_{j'-v} \bigl(\gamma^i_v(b)\bigr) =0,
$$
because $\gamma^i_v(b)=0$ for $b\in B$ and $v<i$, by Lemma~\ref{propiedadpolinomios}, and $\gamma^{lh}_{j'-v}=0$ for $v>j$, by Proposition~\ref{escalera}. Now we are going to prove the equalities. Assume first that $l<\lfloor n/h\rfloor$. Let
$$
0\le v < h-u,\quad v':=lh+v\quad\text{and}\quad u':=lh+u+v.
$$
Then
$$
\gamma^{v'}_{u'}(b) =\sum_{\substack{j=0\\ j\ne v }}^{h-1} \gamma_{u'-j}^{lh}\bigl(\gamma_j^v(b)\bigr) +\gamma^{lh}_{lh+u}\bigl(\gamma^v_v(b)\bigr) +\sum_{j=h}^{u'}\gamma_{u'-j}^{lh}\bigl(\gamma_j^v(b)\bigr) =\gamma^{lh}_{lh+u}(b),
$$
since $\gamma^v_j(b)=0$ for $j<h$, $j\ne v$, $\gamma^{lh}_{u'-j} = 0$ for $j\ge h$, and $\gamma^v_v(b)=b$. The case $l=\lfloor n/h\rfloor$ is similar, but we must take $0\le v<n-\lfloor n/h\rfloor h-u$.
\end{proof}

\begin{lemma}\label{diagonales} Let $s\colon C\ot E\to E\ot C$ be a twisting map and let $2\le j_0 <n$. If $\gamma^1_j = \delta_{1j}\ide$ for $j<j_0$, then for all $i<n$ the following facts hold:

\begin{enumerate}

\smallskip

\item $\gamma^i_l = 0$ for $l<i$ and $\gamma^i_i = \ide$.

\smallskip

\item $\gamma^i_{i+l} = 0$ for $0<l<\min(j_0-1, n-i)$.

\smallskip

\item $\gamma^i_{i+j_0-1} = i\gamma^1_{j_0}$ for $i\le n-j_0$.

\end{enumerate}

\end{lemma}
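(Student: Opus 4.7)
The plan is to expand $\gamma^i_j$ via formula~\eqref{eq formula para gamma_k^j} from Remark~\ref{formula para gamma_k^j}, which expresses it as
\[
\gamma^i_j = \sum_{\substack{u_1, \ldots, u_i \ge 0 \\ u_1 + \cdots + u_i = j}} \gamma^1_{u_1} \xcirc \cdots \xcirc \gamma^1_{u_i},
\]
and to exploit the hypothesis to discard most index tuples. Since $\gamma^1_0 = 0$, $\gamma^1_1 = \ide$, and $\gamma^1_u = 0$ for $2 \le u < j_0$, the only tuples $(u_1, \ldots, u_i)$ that can contribute a nonzero summand are those whose entries all lie in $\{1, j_0, j_0+1, \ldots, n-1\}$; in such a tuple, every factor with $u_l = 1$ acts as the identity.

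With this reduction each of the three items becomes a short counting argument. For item~(1) with $l < i$, every admissible entry is at least $1$, so $\sum u_l \ge i > l$ excludes every tuple, giving $\gamma^i_l = 0$. For $l = i$, a single entry $\ge j_0 \ge 2$ would force $\sum u_l \ge (i-1) + j_0 > i$, so every admissible tuple consists of $i$ ones, leaving the single summand $\ide$ and hence $\gamma^i_i = \ide$. For item~(2) with $0 < l < \min(j_0 - 1, n-i)$, the all-ones tuple has sum $i \ne i+l$, while a tuple with some entry $\ge j_0$ has sum at least $(i-1) + j_0 > i + l$; no tuple is admissible, so $\gamma^i_{i+l} = 0$.

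For item~(3), I apply the same reduction to $\gamma^i_{i+j_0-1}$. The all-ones tuple has sum $i \ne i+j_0-1$; a tuple with two or more entries $\ge j_0$ has sum at least $(i-2) + 2 j_0 = i + 2(j_0-1) > i + j_0 - 1$, using $j_0 \ge 2$. Hence each contributing tuple has exactly one entry equal to $j_0$ and the remaining entries equal to $1$; there are $i$ such tuples (one per placement of the $j_0$), each contributing $\gamma^1_{j_0}$, and the sum is $i\, \gamma^1_{j_0}$. The bound $i \le n - j_0$ ensures $i + j_0 - 1 \le n - 1$, so the index is within range. The whole argument is purely combinatorial; the only care required is enforcing that no forbidden entry $u_l \in \{0, 2, \ldots, j_0 - 1\}$ slips in, which is exactly what the hypothesis and $j_0 \ge 2$ prevent, so there is no real obstacle to overcome.
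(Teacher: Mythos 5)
Your proposal is correct and follows essentially the paper's approach: items (1) and (2) are argued exactly as in the paper, by expanding $\gamma^i_j$ via formula~\eqref{eq formula para gamma_k^j} and discarding tuples using $\gamma^1_0=0$, $\gamma^1_1=\ide$ and $\gamma^1_u=0$ for $1<u<j_0$. For item (3) you replace the paper's induction on $i$ by a direct count of the admissible tuples (exactly one entry equal to $j_0$, the remaining $i-1$ entries equal to $1$, giving $i$ copies of $\gamma^1_{j_0}$), which is a harmless repackaging of the same computation, so there is no gap.
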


\begin{proof} Note first that item~(1) follows from formula~\eqref{eq formula para gamma_k^j} and the fact that $\gamma^1_0 = 0$ and $\gamma^1_1 = \ide$. We now prove item~(2). Again by formula~\eqref{eq formula para gamma_k^j}
$$
\gamma^i_{i+l} = \sum_{u_1,\dots, u_i\ge 0\atop u_1+\cdots+u_i=i+l} \gamma^1_{u_1}\xcirc \cdots\xcirc \gamma^1_{u_i}
$$
Suppose $\gamma^i_{i+l}\ne 0$. Then some $\gamma^1_{u_1}\xcirc \cdots\xcirc \gamma^1_{u_i}\ne 0$. Since $\gamma^1_j = \delta_{1j}\ide$ for $j<j_0$, each $u_v$ is $1$ or greater or equal than $j_0$. But since $l\ge 1$ and $u_1+\cdots+u_i = i+l$, there is at least one $u_v$ greater or equal than $j_0$. But then $u_1+\cdots+u_i\ge i + j_0-1>i+l$, which is a contradiction. We finally prove item~(3). We proceed by induction on $i$. The case $i=1$ is trivial. Assume that $i>1$ and that the result is valid for $i-1$. By item~(1), we know that $\gamma^{i-1}_{i+j_0-1-v} = 0$ for $v>j_0$. Moreover, $\gamma^1_v = 0$ for $1<v<j_0$. Hence,
$$
\gamma^i_{i+j_0-1} = \sum_{v=0}^{i+j_0-1}\gamma^{i-1}_{i+j_0-1-v}\xcirc \gamma^1_v = \gamma^{i-1}_{i+j_0-2}\xcirc \gamma^1_1 + \gamma^{i-1}_{i-1}\xcirc \gamma^1_{j_0} = (i-1)\gamma^1_{j_0} + \gamma^1_{j_0} = i\gamma^1_{j_0},
$$
where the third equality follows from item~(1) and the inductive hypothesis.
\end{proof}

\begin{proposition}\label{s induces the flip}  Assume that $q$ is invertible, $B$ is included in the center of $A$, and $h$ is greater than $1$ and cancelable in $B$. Then,
$$
s(C\ot B)\subseteq B\ot C\text{ if and only if } s(c\ot b) = b\ot c\text{ for all }c\in C\text{ and }b\in B.
$$
\end{proposition}

\begin{proof} By Remark~\ref{si q es inversible puede tomars=1} we can assume that $q=1$. Suppose that $s(C\ot B)\subseteq B\ot C$, which implies that $\gamma^i_j(B)\subseteq B$ for all $i,j$. By Lemma~\ref{propiedadpolinomios} we  know that $\gamma^1_1(b) = b$ for all $b\in B$. Hence, by items~(1) and (2) of Lemma~\ref{diagonales}, in order to finish the proof its suffices to check that $\gamma^1_j=0$ on B, for all $j\ge 2$. Again by Lemma~\ref{propiedadpolinomios} this is true for $1 < j < h$, and we are going to prove it for $j\ge h$ by induction on $j$. So, we assume that $\gamma^1_j(b)=0$, for all $b\in B$ and $h-1\le j < j_0$, and we consider two cases:

\smallskip

\noindent a) If $j_0=lh$ for some $l\ge 1$, then
\begin{align*}
\quad \gamma^h_{lh}(bx) & =\sum_{j=0}^{lh+h-1} \gamma^h_j(b)\gamma^j_{lh}(x) &&\text{by Prop.~\ref{equivalencias de ser un twisting map} and~\ref{escalera}}\\
& = b\gamma^h_{lh}(x) + \gamma^h_{lh+h-1}(b)\gamma^{lh+h-1}_{lh}(x) && \text{by Lemma~\ref{diagonales}}\\
& = b\gamma^h_{lh}(x) +\gamma^h_{lh+h-1}(b)\gamma^{lh}_{lh}(1). && \text{by Proposition~\ref{gamma^{lh+r}_lh(b_j)}}\\
& = b\gamma^h_{lh}(x) + \gamma^h_{lh+h-1}(b). && \text{by Prop.~\ref{equivalencias de ser un twisting map}}
\intertext{On the other hand,}
\gamma^h_{lh}(xb) & =\sum_{j=1}^{lh} \gamma^h_j(x)\gamma^j_{lh}(b) && \text{by Prop.~\ref{equivalencias de ser un twisting map} and~\ref{triangular}}\\
& =\gamma^h_{1}(x)\gamma^{1}_{lh}(b) + \gamma^h_{lh}(x)b && \text{by Lemma~\ref{diagonales}}\\
& =\gamma^h_{lh}(x)b. && \text{by Lemma~\ref{bloque}}
\end{align*}
So, $\gamma^h_{lh+h-1}(b)=0$, since $b$ is central, and then $h\gamma^1_{lh}(b)=0$, by item~(3) of Lemma~\ref{diagonales}. But this implies that $\gamma^1_{lh}(b)=0$, since $h$ is cancelable in $B$.

\smallskip

\noindent b) If $j_0=lh+j$ for some $l\ge 1$ and $1\le j<h$, then on one hand
\allowdisplaybreaks
\begin{align*}
\quad \gamma^1_{lh}(bb_j) & =\sum_{i=0}^{lh+h-1} \gamma^1_i(b)\gamma^i_{lh}(b_j) && \text{by Prop.~\ref{equivalencias de ser un twisting map} and~\ref{escalera}}\\
& = \gamma^1_1(b)\gamma^1_{lh}(b_j) + \sum_{i=j_0}^{lh+h-1} \gamma^1_i(b)\gamma^i_{lh}(b_j) && \text{by inductive hypothesis}\\
& = \gamma^1_1(b)\gamma^1_{lh}(b_j)+\gamma^1_{lh+j}(b)\gamma^{lh}_{lh}(1) && \text{by Proposition~\ref{gamma^{lh+r}_lh(b_j)}}\\
& = b\gamma^1_{lh}(b_j)+\gamma^1_{lh+j}(b), && \text{by Prop.~\ref{eq formula para gamma_k^j} and Lemma~\ref{propiedadpolinomios}}\\
\intertext{and, on the other hand,}
\gamma^1_{lh}(b_jb) & =\sum_{i=1}^{n-1} \gamma^1_i(b_j)\gamma^i_{lh}(b) && \text{by Prop.~\ref{eq formula para gamma_k^j}}\\
& =\gamma^1_{lh}(b_j)b. && \text{by Lemma~\ref{diagonales}}
\end{align*}
Hence $\gamma^1_{lh+j}(b)=0$, which concludes the inductive step and finishes the proof of the proposition.
\end{proof}

\begin{remark} Not all the non commutative truncated polynomial extensions that satisfy Conditions~A1) and~A2), also satisfy the hypothesis of Proposition~\ref{h divide a n} or Proposition~\ref{s induces the flip}. For instance, take $A:=k[x]$ and $n:=3$. The following twisting map is not of this type
\begin{align*}
&    \gamma^1_0(b_0+xb_1)=b_1x^2\quad \text{for } b_0,b_1\in k[x^2],\\
&    \gamma^1_1(x^r)=(-1)^rx^r,\\
&    \gamma^1_2(x^r)=(-1)^{r-1}x^{r-1},
\end{align*}
Note that $B=k[x^2]$, $h=2$ does not divides $n$ and $s(C\ot B) \nsubseteq B\ot C$.
\end{remark}

\begin{proposition}\label{unicidad} Let $D$ be a $k$-algebra, $h>1$ a divisor of $n$ and $g_l\colon D\to D$ {\rm(}$0\le l<n/h{\rm)}$ $k$-linear maps satisfying $g_l(1)=0$. For each $j\ge 0$, let $g_0^j$ denote the $j$-fold composition of $g_0$. Assume that $g^h_0=0$ and that there exists $x\in D$ such that $q:=g^{h-1}_0(x)$ is invertible. Suppose furthermore that $E:=\ker g_0$ is a $k$-subalgebra of $D$ and that $g_0$ is a right $E$-linear map. Then, there exists at most one twisting map $s\colon C\ot D\to D\ot C$, such that $\gamma^1_{lh}=g_l$.
\end{proposition}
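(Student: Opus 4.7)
The plan is to establish uniqueness by strong induction on $j$, showing that any two twisting maps $s,s'$ with $\gamma^1_{lh}=g_l=(\gamma')^1_{lh}$ must coincide in every $\gamma^1_j$. Observe first that any such twisting map has $\gamma^1_0=g_0$, so by Remark~\ref{formula para gamma_k^j} also $\gamma^h_0=g_0^h=0$, while $q=\gamma^{h-1}_0(x)$ is invertible by hypothesis. Hence $s$ satisfies Condition~A1), and Theorem~\ref{A es libre sobre B} makes $D$ into a free right $E$-module with basis $\mathfrak{B}:=\{x,g_0(x),\dots,g_0^{h-1}(x)\}$, which is intrinsic to the given data.

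Setting $\delta_j:=\gamma^1_j-(\gamma')^1_j$, the hypothesis gives $\delta_{lh}=0$ for every $l$, and the target is $\delta_j=0$ for every $j<n$. Under the inductive assumption $\delta_i=0$ for every $i<j$, formula~\eqref{eq formula para gamma_k^j} collapses to
\[
\gamma^t_j-(\gamma')^t_j \;=\; \sum_{v=1}^{t} g_0^{v-1}\xcirc\delta_j\xcirc g_0^{t-v} \qquad (t\ge 1),
\]
because in any composition $\gamma^1_{u_1}\xcirc\cdots\xcirc\gamma^1_{u_t}$ with $\sum u_v=j$ each $u_v$ is at most $j$, and if some $u_v=j$ then all the remaining $u$'s must vanish.

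To handle $j$ not a multiple of $h$, I would write $j=l_0h+r_0$ with $1\le r_0<h$ and apply the product rule for the known map $g_{l_0+1}=\gamma^1_{(l_0+1)h}$ --- or the relation $\gamma^n_m=0$ when $l_0+1=n/h$. Combining Proposition~\ref{escalera} with the composition law yields $\gamma^{lh+r}_{lh}=g_0^r\xcirc\gamma^{lh}_{lh}$ for $0\le r<h$; together with Proposition~\ref{gamma^{lh+r}_lh(b_j)} this gives explicit formulas for the coefficients $\gamma^i_{(l_0+1)h}(b'_\nu)$ appearing when we evaluate the product rule on the elements $b'_\nu:=g_0^{h-1-\nu}(x)\in\mathfrak{B}$. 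Running a secondary descending induction on $r_0$ from $h-1$ down to $1$ while specialising $b$ to these $b'_\nu$ should produce, at each stage, an equation whose leading term is $\delta_j(a)$ times an invertible factor coming from $q$; the invertibility of $q$ then forces $\delta_j(a)=0$ for every $a\in D$.

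The main obstacle will be the algebraic bookkeeping in the extraction step: concretely, one must choose specialisations of $(a,b)$ so that at each stage of the secondary induction the targeted $\delta_{l_0h+r_0}(a)$ appears with an invertible coefficient once the contributions of $\delta_{l_0h+r}$ with $r>r_0$ have already been zeroed out. The edge case $l_0+1=n/h$, where the product rule for $\gamma^1_{(l_0+1)h}$ is unavailable and must be replaced by the vanishing of $\gamma^n_m$, also requires careful separate treatment.
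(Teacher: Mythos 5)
Your preparatory steps are sound: the ascending induction on $j$, the basis $\{x,g_0(x),\dots,g_0^{h-1}(x)\}$ from Lemma~\ref{basis}, and the collapse of $\gamma^t_j-(\gamma')^t_j$ to $\sum_{v}g_0^{v-1}\circ\delta_j\circ g_0^{t-v}$ are all correct. But the step you defer as ``bookkeeping'' is the whole proof, and the instance of the Product law you choose does not deliver it. Subtracting the two expansions of $g_{l_0+1}(ab'_\nu)=\sum_i\gamma^1_i(a)\gamma^i_{(l_0+1)h}(b'_\nu)$ does not isolate $\delta_{l_0h+r_0}(a)$: first, for $\nu\ge 1$ the terms $\delta_{(l_0+1)h+r}(a)\,\gamma^{(l_0+1)h+r}_{(l_0+1)h}(b'_\nu)$ with $1\le r\le\nu$ survive, and by Proposition~\ref{gamma^{lh+r}_lh(b_j)} their coefficients equal $\gamma^{(l_0+1)h}_{(l_0+1)h}(b'_{\nu-r})$ (for $r=\nu$ this is $1$ after normalizing $q=1$), so unknowns from the \emph{next} block enter the equation; second, the unknowns also sit inside the coefficients, because by~\eqref{eq formula para gamma_k^j} each $\gamma^i_{(l_0+1)h}$ with $i\ge 2$ is built from $\gamma^1_u$ with $u$ up to $(l_0+1)h$, in particular from the still undetermined $\gamma^1_{l_0h+1},\dots,\gamma^1_{l_0h+h-1}$, so even the terms with $i\le l_0h$ contribute unknown differences; third, the secondary \emph{descending} induction on $r_0$ is incompatible with the hypothesis ``$\delta_i=0$ for all $i<j$'' that you used to obtain the collapse formula. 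So the extraction step is a genuine gap, not a routine verification, and the awkward edge case $l_0+1=n/h$ is a symptom of having picked the wrong equation.

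The statement follows directly from the Product law of the \emph{current} block's given map. Since $g_0(q^{-1})q=g_0(q^{-1}q)=0$, one has $q^{-1}\in E$, so replacing $x$ by $xq^{-1}$ one may assume $q=1$. Then, by Propositions~\ref{gamma^{lh+r}_lh(b_j)}, \ref{escalera} and~\ref{equivalencias de ser un twisting map}, $\gamma^i_{l_0h}(b_j)=0$ for $i>l_0h+j$ and $\gamma^{l_0h+j}_{l_0h}(b_j)=1$, whence
\begin{equation*}
g_{l_0}(ab_j)=\gamma^1_{l_0h}(ab_j)=\sum_{i=0}^{l_0h+j-1}\gamma^1_i(a)\,\gamma^i_{l_0h}(b_j)+\gamma^1_{l_0h+j}(a)\qquad(1\le j<h).
\end{equation*}
By~\eqref{eq formula para gamma_k^j} the coefficients $\gamma^i_{l_0h}$ with $i\ge2$ involve only $\gamma^1_u$ with $u\le l_0h$, so this identity expresses $\gamma^1_{l_0h+j}$ through $g_{l_0}$ and the $\gamma^1_i$ with $i<l_0h+j$; a single ascending induction then yields uniqueness, with no case distinction at the top block and no appeal to $\gamma^n_m=0$. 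This is exactly the paper's argument.
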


\begin{proof} Since $q^{-1}\in E$, because
$$
g_0(q^{-1})q=g_0(q^{-1}q)=0,
$$
replacing $x$ by $xq^{-1}$, we can assume that $g_0^{h-1}(x)=1$, and we do it. For $0\le j <h$, let $b_j:= g_0^{h-j-1}(x)$. Note that $b_0=1$. By formula~\eqref{eq formula para gamma_k^j}, the maps $\gamma^i_j$, with $i\ge 2$, are determined by the $\gamma^1_u$'s, with $u\le j$. Moreover, by  Proposition~\ref{gamma^{lh+r}_lh(b_j)} and Proposition~\ref{equivalencias de ser un twisting map},
$$
\gamma^{lh+r}_{lh}(b_j)=\begin{cases} 1 & \text{if $r=j$}\\ 0 & \text{if $r>j$.} \end{cases}
$$
Hence, by the Product law,
$$
\gamma^1_{lh}(ab_j) = \sum_{i=0}^{n-1}\gamma^1_i(a)\gamma^i_{lh}(b_j) = \sum_{i=0}^{lh+j-1} \gamma^1_i(a) \gamma^i_{lh}(b_j) + \gamma^1_{lh+j}(a),
$$
and so, for each $j\ge 1$, the map $\gamma^1_{lh+j}$ is determined by the $\gamma^1_i$'s with $i<lh+j$.
\end{proof}

Under suitable hypothesis it is possible to say more about the maps $\gamma^i_j$. For instance we have the following result, which will not be used in the sequel.

\begin{proposition}\label{diagonal en un bloque (sobre B)} Let $l \le \lfloor \frac{n-1}{h} \rfloor$. Assume that
\begin{equation}
\gamma^{lh}_{lh}(ba)=\gamma^{lh}_{lh}(b)\gamma^{lh}_{lh}(a)\quad\text{for all $a\in A$ and $b\in B$,} \label{eqq13}
\end{equation}
and that there exist $q_l\in \gamma_0^{h-1}(A)$ such that $\gamma_{lh}^{lh}(q_l)$ is right cance\-lable in $A$. Then
$$
\gamma^{lh+i}_{lh+j}(b)=0\quad\text{for $b\in B$ and $0\le i<j<h$.}
$$
\end{proposition}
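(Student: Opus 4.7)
The plan is to use Proposition~\ref{triangular} to reduce the claim to showing $\gamma^{lh}_{lh+u}(b)=0$ for every $b\in B$ and every $u\ge 1$ with $lh+u<n$, and then to extract this from the hypothesis by combining the product law with a careful choice of test elements. Indeed, Proposition~\ref{triangular} asserts that within the $l$th diagonal block of $M(b)$ (of size $h$ when $l<\lfloor n/h\rfloor$ and of size $n-lh$ when $l=\lfloor n/h\rfloor$), all entries on a given super-diagonal coincide, so $\gamma^{lh+i}_{lh+j}(b)=\gamma^{lh}_{lh+(j-i)}(b)$ whenever $0\le i<j$ and $lh+j<n$. It therefore suffices to establish $\gamma^{lh}_{lh+u}(b)=0$ for $1\le u\le N$, where $N:=\min(h-1,\,n-1-lh)$.

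To obtain the required key identity, I would apply the product law to $\gamma^{lh}_{lh}(ba)$ for $b\in B$ and $a\in A$. By Proposition~\ref{triangular} the matrix $M(b)$ is upper triangular, so summands with lower index below $lh$ vanish; by Proposition~\ref{escalera} (or by the convention $\gamma^i_{\cdot}=0$ for $i\ge n$ when $l=\lfloor n/h\rfloor$) the summands with upper index at least $(l+1)h$ also vanish. Cancelling the $i=lh$ term against the hypothesis $\gamma^{lh}_{lh}(ba)=\gamma^{lh}_{lh}(b)\gamma^{lh}_{lh}(a)$ yields
$$0=\sum_{r=1}^{N}\gamma^{lh}_{lh+r}(b)\,\gamma^{lh+r}_{lh}(a)\qquad\text{for every }a\in A,\;b\in B.$$

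Next, choose $\tilde x_l\in A$ with $\gamma_0^{h-1}(\tilde x_l)=q_l$ and set $\tilde b_j:=\gamma_0^{h-1-j}(\tilde x_l)$ for $0\le j<h$, so that $\tilde b_0=q_l$. Mimicking the proof of Proposition~\ref{gamma^{lh+r}_lh(b_j)}---expanding $\gamma^{lh+r}_{lh}$ by the composition law with inner index $lh$ and using Proposition~\ref{escalera} with $l'=l$ to annihilate the summands $\gamma^{lh}_u$ with $u<lh$---gives $\gamma^{lh+r}_{lh}(\tilde b_j)=\gamma^{lh}_{lh}\bigl(\gamma_0^r(\tilde b_j)\bigr)$. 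Since $\gamma_0^r(\tilde b_j)=\gamma_0^{r+h-1-j}(\tilde x_l)$ equals $\tilde b_{j-r}$ for $0\le r<j$, equals $q_l$ for $r=j$, and vanishes for $r>j$ (because $\gamma_0^{h}=0$), we obtain the explicit formula
$$\gamma^{lh+r}_{lh}(\tilde b_j)=\begin{cases}\gamma^{lh}_{lh}(\tilde b_{j-r}) & 0\le r<j,\\ \gamma^{lh}_{lh}(q_l) & r=j,\\ 0 & r>j.\end{cases}$$

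Finally, I would induct on $j\in\{1,\dots,N\}$ to show $\gamma^{lh}_{lh+j}(b)=0$ for $b\in B$. Substituting $a=\tilde b_j$ in the key identity truncates the sum to $r\le j$, and the inductive hypothesis kills the terms with $1\le r<j$, leaving $\gamma^{lh}_{lh+j}(b)\,\gamma^{lh}_{lh}(q_l)=0$; right cancelability of $\gamma^{lh}_{lh}(q_l)$ then gives $\gamma^{lh}_{lh+j}(b)=0$. I do not anticipate a serious obstacle; the main care is simply in tracking the ranges of $r$, $j$, and $l$ so that Propositions~\ref{escalera} and~\ref{triangular} apply uniformly across the two cases $l<\lfloor n/h\rfloor$ and $l=\lfloor n/h\rfloor$ (when $h\nmid n$), which is precisely why the truncation level $N$ has the form above.
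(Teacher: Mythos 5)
Your proposal is correct and follows essentially the same route as the paper's proof: the same test elements $\gamma_0^{h-1-j}(x_l)$, the same identity $\gamma^{lh+r}_{lh}=\gamma^{lh}_{lh}\xcirc\gamma^r_0$ obtained from the Composition law and Proposition~\ref{escalera}, the same cancellation against hypothesis~\eqref{eqq13} via the Product law and upper triangularity of $M(b)$, induction on $j$ with right cancelability of $\gamma^{lh}_{lh}(q_l)$, and the final reduction through Proposition~\ref{triangular}. The only difference is cosmetic: you derive the key identity for arbitrary $a\in A$ before specializing, while the paper plugs in the test elements directly.
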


\begin{proof} Take $x_l\in A$ such that $\gamma_0^{h-1}(x_l) = q_l$ and set $b^{(l)}_j:=\gamma_0^{h-1-j}(x_l)$ for $0\le j<h$. By the Composition law and Proposition~\ref{escalera},
\begin{equation*}
\gamma^{lh+k}_{lh}=\sum_{u=0}^{lh}\gamma^{lh}_{lh-u}\xcirc\gamma^k_u = \gamma^{lh}_{lh} \xcirc\gamma^k_0\quad\text{for all $k\ge 0$.}
\end{equation*}
So,
\begin{equation}
\gamma^{lh+k}_{lh}(b^{(l)}_j) =\gamma^{lh}_{lh}\bigl(\gamma_0^{k+h-1-j}(x_l)\bigr) =\gamma^{lh}_{lh} \bigl(\gamma_0^{k-j}(q_l)\bigr) =\begin{cases} \gamma^{lh}_{lh}(q_l) &\text{if $k=j$,}\\ 0& \text{if $k>j$,}\end{cases} \label{eqq10}
\end{equation}
where the last equality for $k>j$, follows from the fact that $q_l\in B\subseteq \ker \gamma^{k-j}_0$. Moreover, by Proposition~\ref{triangular} we know that for $b\in B$,
\begin{equation}
\gamma^u_v(b)=0\quad\text{for $u>v$.}\label{eqq12}
\end{equation}
Hence,
\begin{align*}
\gamma^{lh}_{lh}(b)\gamma^{lh}_{lh}(b^{(l)}_j) &=\gamma^{lh}_{lh} \bigl(bb^{(l)}_j\bigr) && \text{by~\eqref{eqq13}}\\
& =\bigl(M(b)M(b^{(l)}_j)\bigr)_{lh,lh} &&\text{by Corollary~\ref{M es multiplicativa}}\\
& =\sum_{k=0}^{j-1}\gamma^{lh}_{lh+k}(b) \gamma^{lh+k}_{lh}(b^{(l)}_j) +\gamma^{lh}_{lh+j}(b) \gamma^{lh}_{lh}(q_l), &&\text{by~\eqref{eqq10} and~\eqref{eqq12}}
\end{align*}
for all $b\in B$. Thus
$$
\sum_{k=1}^{j-1}\gamma^{lh}_{lh+k}(b)\gamma^{lh+k}_{lh}(b^{(l)}_j) + \gamma^{lh}_{lh+j}(b) \gamma^{lh}_{lh}(q_l) = 0.
$$
Now, an easy induction on $j$ yields $\gamma^{lh}_{lh+j}(b)=0$ for $j = 1,\dots,h-1$. By Proposition~\ref{triangular} this finishes the proof.
\end{proof}

\begin{remark} By the Product law and Propositions~\ref{escalera} and~\ref{triangular},
$$
\gamma^{lh}_{lh}(ab)=\sum_{j=0}^{n-1} \gamma^{lh}_j(a) \gamma^j_{lh}(b) =\gamma^{lh}_{lh}(a)\gamma^{lh}_{lh}(b)\quad \text{for all $a\in A$, $b\in B$ and $l< \left\lfloor \frac{n}{h} \right\rfloor$.}
$$
Hence, the first hypothesis of the previous proposition is automatically fulfilled when $B$ is included in the center of $A$. In fact $\gamma^{lh}_{lh}(b)\in B$, since
$$
\gamma^1_0\bigl(\gamma^{lh}_{lh}(b)\bigr) = \gamma^{lh+1}_{lh}(b) = 0
$$
by the Composition law and Proposition~\ref{triangular}.
\end{remark}

If the hypothesis of the previous proposition are fulfilled for all $l \le \lfloor \frac{n-1}{h}\rfloor$, then from that result, Proposition~\ref{triangular} and the fact that $\gamma^0_0 = \ide$, it follows that the matrix $M(b)$ has the following shape
$$
M(b)=
\left(
\begin{array}{l@{}c@{}c@{}c@{}r}
\begin{array}{|cccc|}\hline
    b&0&\cdots &0\\
    0&b&\cdots &0\\
    \vdots&\vdots&\ddots &\vdots \\
    0&0&\cdots &b\\ \hline
\end{array}
&
\begin{array}{cccc}
    0&\cdots& \\
    \gamma_h^1(b)&\cdots &\phantom{\cdots}&\phantom{\gamma_h^1(b)} \\
    \vdots& &&\\
    &\phantom{\gamma_h^h(b)}&&
\end{array}
&&
\begin{array}{cc}
    \cdots &0\\
     &\gamma^1_{n-1}(b) \\
    &\vdots\\
    &
\end{array}
\\
\begin{array}{cccc}
    0&\hdotsfor{2} &0\\
    \phantom{b}&\phantom{0}&\phantom{\cdots} &\phantom{0}\\
    &&&
\end{array}
&
\begin{array}{|cccc|}\hline
    \gamma_h^h(b)&0&\cdots &0\\
    0&\gamma_h^h(b)&\cdots &0\\
    \vdots& \vdots&\ddots &\vdots \\
    0&0&\cdots &\gamma_h^h(b)\\ \hline
\end{array}
&&&
\\
&
\begin{array}{cccc}
    0\phantom{nm}&\cdots&\cdots &\phantom{nm}0\\
\end{array}
&
\begin{array}{|c}\hline
    \gamma_{2h}^{2h}(b)\\
    \vdots
\end{array}
&
\begin{array}{cc}
    \cdots& \phantom{\gamma^1_{n-1}(b)}\\
    \ddots &\vdots
\end{array}
\\
\begin{array}{cc}
    \vdots& \\
    0 &\cdots
\end{array}

\end{array}
\right)
$$
for each $b\in B$.

\section{Twisting maps with $\gamma^1_0\ne 0$}
Recall that $C:=k[y]/\langle y^n\rangle$ and $A$ is an arbitrary $k$-algebra. The aim of this section is to determine all the twisting maps $s\colon C\ot A\to A\ot C$ with $\gamma^1_0\ne 0$ that satisfy:

\begin{itemize}

\smallskip

\item[-] There exist $1< h\le n$ and $x\in A$ such that $\gamma^h_0=0$ and $\gamma^{h-1}_0(x)$ is invertible,

\smallskip

\item[-] $B:=\ker \gamma^1_0$ is included in the center of $A$,

\smallskip

\item[-] $s(c\ot b) = b\ot c$ for all $c\in C$ and $b\in B$,

\end{itemize}

\begin{remark} By Proposition~\ref{h divide a n} the first condition implies that $h\mid n$; by the Product law the third condition is satisfied if and only is the $\gamma^i_j$'s are $B$-linear maps; by Proposition~\ref{unicidad} the map $s$ is determined by the maps $\gamma^1_{lh}$ ($0\le l < n/h$); by Theorem~\ref{A es libre sobre B} the algebra $A$ is free over $B$ with basis $\{1=b_0,\dots,b_{n-1}\}$, where $b_j:=\gamma^{h-j-1}_0(x)$. Finally, by Proposition~\ref{s induces the flip} we know that if $h$ is cancelable in $B$, then third condition can be replaced by that requirement that $s(C\ot B)\subseteq B\ot C$.
\end{remark}

The next theorem says that the $\gamma^1_{lh}$'s can be chosen arbitrarily.

\begin{theorem}\label{clasificacion} Let $h>1$ be a divisor of $n$ and let $g_l\colon A\to A$ {\rm(}$0\le l< n/h{\rm)}$ be $k$-linear maps satisfying $g_l(1)=0$. Assume that $g^h_0=0$ and that there exists $x$ such that $q:=g^{h-1}_0(x)$ is invertible. Suppose furthermore that $B:=\ker g_0$ is a $k$-subalgebra of the center of $A$ and that the $g_l$'s are $B$-linear maps. Then there exists a unique twisting map $s\colon C\ot A\to A\ot C$ such that $\gamma^1_{lh}=g_l$. Moreover $s(c\ot b) = b\ot c\text{ for all }c\in C\text{ and }b\in B$.
\end{theorem}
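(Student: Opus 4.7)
My plan is to extract uniqueness from Proposition~\ref{unicidad} and to construct the twisting map explicitly, then verify the hypotheses of Proposition~\ref{variante de gamma como suma}. The hypotheses on the $g_l$ match exactly those of Proposition~\ref{unicidad} (taking $D=A$ and $E=B$), so a twisting map with $\gamma^1_{lh}=g_l$ is unique. For existence, I first invoke Remark~\ref{si q es inversible puede tomars=1} (using that $q^{-1}\in B$, from centrality of $B$) to replace $x$ by $xq^{-1}$ and assume $q=1$. Setting $b_j:=g_0^{h-1-j}(x)$ for $0\le j<h$, Theorem~\ref{A es libre sobre B} gives the $B$-basis $\{b_0,\ldots,b_{h-1}\}$ of $A$, with $b_0=1$ and $g_0(b_j)=b_{j-1}$ for $j\ge 1$.

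Next I define $\gamma^1_m$ by induction on $m=lh+j$: set $\gamma^1_{lh}:=g_l$, and for $1\le j<h$,
\[
\gamma^1_{lh+j}(a):=g_l(ab_j)-\sum_{i=0}^{lh+j-1}\gamma^1_i(a)\,\gamma^i_{lh}(b_j),
\]
where each $\gamma^i_{lh}$ with $i\ge 2$ is obtained via formula~\eqref{eq formula para gamma_k^j} from the $\gamma^1_v$ with $v\le lh$ (already available). For $r\ge 2$ I take \eqref{eq formula para gamma_k^j} as the definition of $\gamma^r_j$, which builds the Composition Law in by construction. I then apply Proposition~\ref{variante de gamma como suma}: axiom A1 is immediate ($\gamma^h_0=g_0^h=0$, $\gamma^{h-1}_0(x)=1$), and the vanishing of $\gamma^n_j$ for $j\ge\lfloor n/h\rfloor h$ is vacuous since $h\mid n$. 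A short induction on $m$ gives $\gamma^1_m(1)=\delta_{1m}$ (the defining sum telescopes using the inductive hypothesis) and the $B$-linearity of each $\gamma^1_m$ (from the $B$-linearity of $g_l$ and the centrality of $B$).

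The main obstacle is verifying the Product Law $\gamma^1_m(ab)=\sum_i\gamma^1_i(a)\gamma^i_m(b)$. By $B$-linearity and the decomposition $A=\bigoplus Bb_k$ it suffices to take $b=b_k$. I would proceed in stages. For $m=0$ (i.e.\ for $g_0$) the identity $g_0(ab_k)=\sum_{i=0}^k\gamma^1_i(a)b_{k-i}$ is forced directly by the definition of the $\gamma^1_j$ in block $l=0$, since $g_0^i(b_k)=b_{k-i}$ for $i\le k$ and vanishes otherwise. Iterating the Product Law for $g_0$ yields $g_0^h(ab)=\sum_s\gamma^h_s(a)g_0^s(b)$; the vanishing of the left-hand side combined with a triangular expansion at $b=b_k$ forces $\gamma^h_s=0$ for $0\le s<h$. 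Lemma~\ref{bloque} now applies, and Proposition~\ref{escalera} then yields $\gamma^i_j=0$ for $i\ge lh$ and $j<lh$; in particular $\gamma^{lh+j}_{lh}=g_0^j\circ\gamma^{lh}_{lh}$. The Product Law for $g_l$ thus reduces to the triangular identities
\[
g_0^j\bigl(\gamma^{lh}_{lh}(b_k)\bigr)=\delta_{j,k}\qquad\text{for }k\le j<h,
\]
which I would establish by induction on $l$ using the factorization $\gamma^{lh}_{lh}=(\gamma^h_h)^l$ and a careful expansion of $\gamma^h_h$ modulo lower basis elements; this combinatorial step is the most delicate part. With the Product Law established for each $\gamma^1_{lh}$, the case $m=lh+j$ with $j\ge 1$ follows from the defining formula by substituting the inductive hypothesis $\gamma^1_i(ab_k)=\sum_{i'}\gamma^1_{i'}(a)\gamma^{i'}_i(b_k)$, using the iterative Product Law $\sum_i\gamma^{i'}_i(b_k)\gamma^i_{lh}(b_j)=\gamma^{i'}_{lh}(b_kb_j)$ to collapse the double sum, and then appealing to the triangular identities to single out the $i=lh+j$ term.

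Finally, the centrality claim $s(c\otimes b)=b\otimes c$ for $b\in B$ amounts to $\gamma^1_m(b)=\delta_{1m}b$; this follows by induction on $m$ from the defining formula, the observation $g_l(b)=bg_l(1)=0$, and the base computation $\gamma^1_1(b)=g_0(bb_1)-g_0(b)b_1=bg_0(b_1)=b$.
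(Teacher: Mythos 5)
Your construction is the paper's own: uniqueness via Proposition~\ref{unicidad}, normalization to $q=1$, the same recursive definition of $\gamma^1_{lh+j}$ from the prescribed $\gamma^1_{lh}=g_l$, reduction to the basis $b_0,\dots,b_{h-1}$ by $B$-linearity (the citation should be Lemma~\ref{basis} rather than Theorem~\ref{A es libre sobre B}, which presupposes a twisting map, but this is cosmetic), and block-by-block verification of the Product law; routing the final assembly through Proposition~\ref{variante de gamma como suma} instead of Corollary~\ref{gamma como suma} is a harmless variant, since $h\mid n$ makes its second hypothesis vacuous. The steps you do carry out --- $\gamma^1_m(1)=\delta_{1m}$, $B$-linearity, the Product law for $g_0$ being forced by the block-zero definitions, the deduction of the Product law for $\gamma^1_{lh+j}$ with $j\ge 1$ from that of the $\gamma^1_{lh}$ together with the triangular identities, and the verification that $\gamma^1_m(b)=\delta_{1m}b$ for $b\in B$ --- are correct and coincide with the paper's Second, Third and Fourth steps.

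The genuine gap is exactly the step you flag as ``the most delicate part'': the triangular identities $\gamma^{lh+j}_{lh}(b_k)=\delta_{jk}$ for $k\le j<h$ (in your formulation, $g_0^j\bigl(\gamma^{lh}_{lh}(b_k)\bigr)=\delta_{jk}$), on which the Product law for every $\gamma^1_{lh}=g_l$ with $l\ge 1$, and hence the whole induction, rests. You only propose a plan --- induction on $l$ through the factorization $\gamma^{lh}_{lh}=(\gamma^h_h)^l$ and an expansion of $\gamma^h_h(b_k)$ ``modulo lower basis elements'' --- and this route, if attempted through formula~\eqref{eq formula para gamma_k^j}, is genuinely awkward because the values $\gamma^1_u(b_k)$ are not explicit; so as written the crux of the existence proof is unproven. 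The identities are true, and the paper settles them in one line by choosing the opposite split in the Composition law: since the previous block gives $\gamma^{lh}_u=0$ for $u<lh$, one has $\gamma^{lh+j}_{lh}=\gamma^{lh}_{lh}\circ\gamma^j_0$, hence $\gamma^{lh+j}_{lh}(b_k)=\gamma^{lh}_{lh}\bigl(g_0^j(b_k)\bigr)$, which equals $\gamma^{lh}_{lh}(1)=1$ when $j=k$ and $\gamma^{lh}_{lh}(0)=0$ when $j>k$; the terms with upper index at least $(l+1)h$ vanish by Proposition~\ref{escalera}, which is available at this point in your ordering. Putting $g_0^j$ on the inside, where it acts on the explicit element $b_k$, removes any need to analyse $\gamma^h_h$; replace your sketched induction by this argument and your proof closes, essentially reproducing the paper's.
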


\begin{proof} Since $0=g_0(q^{-1}q)=g_0(q^{-1})q=0$, we have $q^{-1}\in B$. Replacing $x$ by $q^{-1}x$, we can assume $g^{h-1}_0(x)=1$. We set $\gamma^0_j:= \delta_{0j}\ide$ and, based on the proof of Proposition~\ref{unicidad}, for increasing $l$ we define $\gamma^r_j$ for $r\ge 1$ and $lh\le j< lh+h$, as follows:

\begin{itemize}

\item[-] First $\gamma^1_{lh} :=g_l$,

\smallskip

\item[-] Then $\gamma^r_{lh}$ for $r\ge 2$, using formula~\eqref{eq formula para gamma_k^j},

\smallskip

\item[-] Then, $\gamma^1_{lh+j}$ by
$$
\qquad\gamma^1_{lh+j}(a):=\gamma^1_{lh}(a b_j)-\sum_{k=0}^{lh+j-1}\gamma^1_k(a) \gamma^k_{lh}(b_j)\quad\text{for $1\le j < h$,}
$$
where $b_j:=\gamma^{h-1-j}_0(x)$.

\smallskip

\item[-] Finally, $\gamma^r_j$ for $r\ge 2$ and $lh+1\le j < lh+h$, using formula~\eqref{eq formula para gamma_k^j}.

\end{itemize}
By construction the maps $\gamma^r_j$ are $B$-linear and $\gamma^1_j(1)=\delta_{1j}$. Hence, by Corollary~\ref{gamma como suma}, in order to prove the theorem it suffices to show that $\gamma^n_j=0$ for $j<n$ and that the maps $\gamma^1_j$'s satisfy the Product law. To carry out this task, we will need to use the Composition law (which follows immediately from the definition of the $\gamma^r_j$'s) and that $\gamma^r_j(1)=\delta_{rj}$ for all $r$ (which follows easily from the case $r = 1$, using formula~\eqref{eq formula para gamma_k^j}. Next we will check the Product law for every block of $\gamma_j^1$'s with $lh\le j< lh+h$, and that $\gamma^n_j=0$ for $j<n$, in five steps.

\begin{description}[font=\normalfont,labelindent=0.2cm,leftmargin=\parindent,style=sameline]

\item[First step] Check that $\gamma^{lh+h-1}_{lh}(x)=1$.

\item[Second step] Verify the Product law for $\gamma^1_{lh}$.

\item[Third step] Verify the Product law for $\gamma^r_j$ with $r>1$ and $j\le lh$.

\item[Fourth step] Verify the Product law for $\gamma^1_{lh+1},\dots,\gamma^1_{lh+h-1}$.

\item[Fifth step] Check that $\gamma^i_j=0$ for $j< lh+h$ and $i\ge lh+h$.

\end{description}

\medskip

\noindent For $l=0$, we have:

\smallskip

\noindent{\rm First step.}\enspace This is true by assumption.

\smallskip

\noindent{\rm Second step.}\enspace Since the maps $\gamma^r_j$ are $B$-linear and, by Lemma~\ref{basis}, $\{b_j: 0\le j<h\}$ is a $B$-basis of $A$, it is sufficient to show that
$$
\gamma^1_0\bigl(ab_j\bigr) =\sum_{k=0}^{n-1}\gamma^1_k(a)\gamma^k_0(b_j)\quad\text{for $0\le j < h$.}
$$
For $j = 0$ this follows from the fact that $\gamma^k_0(1) =\delta_{k0}$, while, for $j>0$, this follows from the definition of the $\gamma^1_j$'s and the facts that $\gamma^j_0(b_j) = 1$ and $\gamma^k_0(b_j) = 0$ for $k>j$.

\smallskip

\noindent{\rm Third step.}\enspace Assuming that the result is valid for $r$ and proceeding as when we checked item~(2)(c) in the part $(3)\Rightarrow~(2)$ of the proof of Proposition~\ref{equivalencias de ser un twisting map}, we obtain
$$
\gamma_0^{r+1}(ab)=\gamma^1_0\bigl(\gamma^r _0(ab)\bigr) =\sum_{u=0}^{n-1} \gamma^{r+1}_u(a) \gamma^u_0(b).
$$

\smallskip

\noindent{\rm Fourth step.}\enspace Let $0<j<h$. Assume that the Product law holds for $\gamma^1_i$ with $i<j$. Then,
\allowdisplaybreaks
\begin{align*}
\gamma^1_j(ab)&=\gamma^1_0(ab b_j)-\sum_{i=0}^{j-1}\gamma_i^1(ab)\gamma^i_0(b_j)\\[3pt]
&=\sum_{u=0}^{n-1}\gamma^1_u(a)\gamma^u_0(b b_j)-\sum_{i=0}^{j-1}\sum_{u=0}^{n-1} \gamma^1_u(a)\gamma_i^u(b)\gamma^i_0(b_j) &&\text{by inductive hypothesis}\\[3pt]
&=\sum_{u=0}^{n-1}\gamma^1_u(a)\Biggl(\gamma^u_0(b b_j)-\sum_{i=0}^{j-1}\gamma_i^u(b) \gamma^i_0(b_j)\Biggr)\\[3pt]
&= \sum_{u=0}^{n-1}\gamma^1_u(a)\Biggl(\sum_{i=0}^{n-1}\gamma^u_i(b)\gamma^i_0(b_j)- \sum_{i=0}^{j-1}\gamma_i^u(b) \gamma^i_0(b_j)\Biggr) &&\parbox{1.3in}{by Second and Third steps}\\[3pt]
&=\sum_{u=0}^{n-1}\gamma^1_u(a)\gamma^u_j(b),
\end{align*}
where for the last equality we use that $\gamma^j_0(b_j)=1$ and $\gamma^i_0(b_j)=0$ for $i>j$.

\smallskip

\noindent{\rm Fifth step.}\enspace This follows from Lemma~\ref{bloque}.

\smallskip

Next, assuming we have carried out the five steps until $l-1$, we execute the five steps for~$l$.

\smallskip

\noindent{\rm First step.}\enspace By the Composition law,
$$
\gamma^{lh +h-1}_{lh}(x)=\sum_{u=0}^{lh}\gamma^h_u\bigl(\gamma^{lh-1}_{lh-u}(x)\bigr)=\gamma^h_h(1)=1,
$$
since $\gamma^h_u=0$ for $u<h$, $\gamma^{lh-1}_{lh-u}=0$ for $u>h$ and $\gamma^{lh-1}_{lh-h}(x)=1$.

\smallskip

\noindent{\rm Second step.}\enspace Since the maps $\gamma^r_j$ are $B$-linear and, by Lemma~\ref{basis}, $\{b_j: 0\le r<h\}$ is a $B$-basis of $A$, it is sufficient to show that
$$
\gamma^1_{lh}\bigl(ab_j\bigr) =\sum_{k=0}^{n-1} \gamma^1_k(a)\gamma^k_{lh} (b_j)\quad\text{for $0\le j < h$.}
$$
For $j = 0$ this follows from the fact $\gamma^k_{lh}(1) =\delta_{k,lh}$. Let now $j>0$. By the Fifth step for $l-1$,
\begin{equation}
\gamma^{lh+j}_{lh}(b_j)= \sum_{i=0}^{lh}\gamma_i^{lh}\bigl(\gamma^j_{lh-i}(b_j)\bigr)= \gamma^{lh}_{lh} \bigl(\gamma^j_0(b_j) \bigr)= 1 \label{eqqq1}
\end{equation}
and
\begin{equation}
\gamma^{lh+r}_{lh}(b_j) = \sum_{i=0}^{lh} \gamma_i^{r-j}\bigl(\gamma^{lh+j}_{lh-i}(b_j)\bigr) = \gamma^{r-j}_0 \bigl( \gamma^{lh+j}_{lh}(b_j)\bigr)=0 \quad\text{for $r>j$}.\label{eqqq2}
\end{equation}
The definition of the $\gamma^1_{lh+j}$'s for $1\le j\le h-1$ yields the desired result.

\smallskip

\noindent{\rm Third step.}\enspace Assuming that the result is valid for $r$ and arguing as in the case $l=0$ we get
$$
\gamma_j^{r+1}(ab)=\sum_{k=0}^j\gamma_k^1\bigl(\gamma_{j-k}^r(ab)\bigr) = \sum_{u=0}^{n-1} \gamma_u^{r+1}(a)\gamma^u_j(b).
$$

\medskip

\noindent{\rm Fourth step.}\enspace  Assume that the Product law holds for $\gamma^1_i$ with $i<lh+j$. Then, by the definition of $\gamma^1_{lh+j}$, the inductive hypothesis and the Third step,
\begin{align*}
\gamma^1_{lh+j}(ab) & =\gamma^1_{lh}(ab b_j)-\sum_{i=0}^{lh+j-1}\gamma_i^1(ab) \gamma^i_{lh}(b_j)\\[3pt]
& =\sum_{u=0}^{n-1}\gamma^1_u(a)\gamma^u_{lh}(b b_j)- \sum_{i=0}^{lh+j-1}\,\sum_{u=0}^{n-1} \gamma^1_u(a) \gamma_i^u(b) \gamma^i_{lh}(b_j)\\[3pt]
& =\sum_{u=0}^{n-1}\gamma^1_u(a)\Biggl(\gamma^u_{lh}(b b_j)-\sum_{i=0}^{lh+j-1} \gamma_i^u(b)\gamma^i_{lh}(b_j) \Biggr)\\[3pt]
& =\sum_{u=0}^{n-1}\gamma^1_u(a)\Biggl(\sum_{i=0}^{n-1}\gamma^u_i(b) \gamma^i_{lh}(b_j)- \sum_{i=0}^{lh+j-1} \gamma_i^u(b)\gamma^i_{lh}(b_j)\Biggr)\\[3pt]
& =\sum_{u=0}^{n-1}\gamma^1_u(a)\sum_{i=lh+j}^{n-1}\gamma^u_i(b) \gamma^i_{lh}(b_j)\\[3pt]
&=\sum_{u=0}^{n-1}\gamma^1_u(a)\gamma^u_{lh+j}(b),
\end{align*}
where the last equality follows from~\eqref{eqqq1} and \eqref{eqqq2}.

\smallskip

\noindent{\rm Fifth step.}\enspace By the Composition law, for each $i\ge lh+h$ and $j<lh+h$,
$$
\gamma^i_j=\sum_{u=0}^j\gamma^h_u\xcirc\gamma^{i-h}_{j-u}=0,
$$
since $\gamma^h_u = 0$ for $u<h$ and $\gamma^{i-h}_{j-u}=0$ for $u\ge h$.
\end{proof}

\subsection{An algorithm}\label{algoritmo} Now we give an algorithm to construct non commutative truncated polynomial extensions of a $k$-algebra $A$:

\begin{enumerate}

\smallskip

\item Take a subalgebra $B$ of the center of $A$ such that $A$ is a free $B$-module with basis $\{ b_0=1,b_1,\dots,b_{h-1} \}$,

\smallskip

\item Take $C= k[y]/\langle y^n\rangle$, where $n$ is a multiple of $h$,

\smallskip

\item Finally, choose a family $g_l\colon A\to A$ ($1\le l <n/h$) of $B$-linear maps satisfying $g_l(1)=0$.

\smallskip

\end{enumerate}
Then, there is a unique twisting map $s\colon C\ot A\to A \ot C$ such that
$$
s(y\ot a)= \sum_{j=0}^{n-1} \gamma^1_j(a) \ot y^j,
$$
where
\begin{itemize}

\item[-] $\gamma^1_0\colon A \to A$ is the $B$-linear map defined by $\gamma^1_0(b_i):= b_{i-1}$ for $i\ge 1$ and $\gamma^1_0(1):=0$,

\item[-] $\gamma^1_{lh}:=g_l$ for $1\le l <n/h$,

\item[-] $\gamma^1_{lh+j}\colon A \to A$ is the $B$-linear map defined by
\begin{equation}
\qquad\qquad\quad\gamma^1_{lh+j}(a):=\gamma^1_{lh}(a b_j)-\sum_{k=0}^{lh+j-1}\gamma^1_k(a) \gamma^k_{lh}(b_j)\quad\text{for $1\le j < h$.}\label{eq8}
\end{equation}

\smallskip

\end{itemize}

\begin{remark} Since $s(c\ot b) = b\ot c$ for all $c\in C$ and $b\in B$, the algebra $B$ is included in the center of $D:=A\ot_s C$, and so, $D$ is a free $B$-algebra of dimension $hn$.
\end{remark}

\begin{remark} As was said before, all the twisting maps $s\colon C\ot A \to A \ot C$ such that

\begin{itemize}

\smallskip

\item[-] $B:=\ker \gamma^1_0$ is a subalgebra of the center of $A$,

\smallskip

\item[-] $s(c\ot b) = b\ot c$ for all $c\in C$ and $b\in B$,

\smallskip

\item[-] there exist $h\ge 2$ and $x\in A$ such that $\gamma^h_0=0$ and $\gamma^{h-1}_0(x)$ is invertible,

\end{itemize}
are of this type. In particular, for all such algebras, $h|n$ and $A$ is free over $B$.
\end{remark}

We next apply the above algorithm to construct a very specific example of truncated noncommutative polynomial extension.

\begin{example} Let $A:=k\times k$ where $k$ is a field of characteristic different from $2$ and let $B := k(1,1)$. Let $n = h = 2$, $b_0 = (1,1)$ and $b_1 = (1,-1)$. It is evident that the $B$-linear map $\gamma^1_0\colon A\to A$, determined by the conditions $\gamma^1_0(b_1) := b_0$ and  $\gamma^1_0(b_0) := 0$, is given by
$$
\gamma^1_0(\lambda_1,\lambda_2) = \left(\frac{\lambda_1-\lambda_2}{2},\frac{\lambda_1-\lambda_2}{2}\right)
$$
A direct computation applying~\eqref{eq8} gives
$$
\gamma^1_1(\lambda_1,\lambda_2) =  (\lambda_2,\lambda_1).
$$
Hence,
$$
s\bigl(y\ot (\lambda_1,\lambda_2)\bigr) = \left(\frac{\lambda_1-\lambda_2}{2},\frac{\lambda_1-\lambda_2}{2}\right)\ot 1 + (\lambda_2,\lambda_1)\ot y.
$$
Note that by Proposition~\ref{simplicidad}, the algebra $D:=A\ot_s \frac{k[y]}{\langle y^2\rangle}$ is simple. Since $(1,1)\ot y$ is nilpotent and $\dim_k(D) = 4$, necessarily $D\simeq M_2(k)$.
\end{example}

The above example is a particular case of a general result.

\begin{proposition} Let $D:=A\ot_s C$ be an algebra constructed using the algorithm introduced in Subsection~\ref{algoritmo}. Then $D$ is simple if and only if  $B$ is a field and $h =n$, where we are using the same notations as in that subsection. Moreover, in this case, $D \simeq M_n(B)$.
\end{proposition}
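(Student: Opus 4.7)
The plan is to extract the condition of Proposition~\ref{simplicidad} from the structure provided by the algorithm. By construction, $B=\ker\gamma^1_0$, $A$ is free over $B$ with basis $\{b_0=1,b_1,\dots,b_{h-1}\}$, $\gamma^1_0(b_i)=b_{i-1}$ for $i\ge 1$, and $\gamma^{n-1}_0=(\gamma^1_0)^{n-1}$ by Remark~\ref{formula para gamma_k^j}; in particular $(\gamma^1_0)^h=0$. If $h<n$, then since $h\mid n$ we have $n\ge 2h$ and so $n-1\ge h$, which gives $\gamma^{n-1}_0=0$. For $a=1\in B\setminus\{0\}$ the condition $Aa\gamma^{n-1}_0(A)=A$ of Proposition~\ref{simplicidad} then fails, so $D$ is not simple.

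From now on assume $h=n$. The $B$-linearity of $\gamma^{n-1}_0$, together with $\gamma^{n-1}_0(b_{n-1})=1$ and $\gamma^{n-1}_0(b_i)=0$ for $i<n-1$, yields $\gamma^{n-1}_0(A)=B$. Because $B$ is central, the double product $AaB$ collapses to $Aa$ for $a\in B$, so Proposition~\ref{simplicidad} reduces to the statement that $Aa=A$ for every $a\in B\setminus\{0\}$. If this holds, fix $u\in A$ with $ua=1$ and expand $u=\sum u_ib_i$ with $u_i\in B$; centrality of $a$ and freeness of $A$ over $B$ force $u_0a=1$ inside the commutative ring $B$, so $a$ is invertible in $B$ and hence $B$ is a field. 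The converse is immediate: if $B$ is a field then $a^{-1}\in B\subseteq A$ yields $1\in Aa$. This settles the equivalence.

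For the isomorphism, assume $h=n$ and $B$ is a field, and endow $A$ with a left $D$-module structure by letting $A\subseteq D$ act by left multiplication and letting $y$ act as $\gamma^1_0$. Two compatibilities need to be verified: $y^n$ acts as $(\gamma^1_0)^n=\gamma^h_0=0$, and the defining relation $ya=\sum_j\gamma^1_j(a)y^j$ of $D$ translates, when evaluated at $a'\in A$, into $\gamma^1_0(aa')=\sum_j\gamma^1_j(a)\gamma^j_0(a')$, which is exactly the Product law of Proposition~\ref{equivalencias de ser un twisting map}(3)(c). Since $\gamma^1_0$ and left multiplication by elements of $A$ are $B$-linear, this yields a $B$-algebra homomorphism $\rho\colon D\to\End_B(A)$.

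It remains to see that $\rho$ is an isomorphism. Writing $d=\sum_{i=0}^{n-1}a_iy^i\in\ker\rho$ and successively evaluating $d\cdot b_k=\sum_{i\le k}a_ib_{k-i}=0$ for $k=0,1,\dots,n-1$ forces $a_0=a_1=\cdots=a_{n-1}=0$, so $\rho$ is injective. Since $\dim_B D=n^2=\dim_B\End_B(A)$, the map $\rho$ is a $B$-algebra isomorphism, and $\End_B(A)\cong M_n(B)$ as $A$ is $B$-free of rank $n$. The main subtle point is reducing $Aa\gamma^{n-1}_0(A)$ in Proposition~\ref{simplicidad} to the left ideal $Aa$ and recognising that $Aa=A$ is equivalent to invertibility of $a$ in $B$; once that is done, both the equivalence and the explicit isomorphism fall out from the free $B$-basis of $D$.
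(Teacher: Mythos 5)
Your proof is correct. The equivalence is handled essentially as in the paper: both arguments run through Proposition~\ref{simplicidad}, observe that $h<n$ forces $\gamma^{n-1}_0=0$ (so the criterion fails at $a=1$), and for $h=n$ use $\gamma^{n-1}_0(A)=B$ together with centrality of $B$ to reduce the criterion to $Aa=A$ for $a\in B\setminus\{0\}$; you conclude that such an $a$ is invertible in $B$ by expanding a left inverse in the $B$-basis, where the paper instead invokes Remark~\ref{ker gamma^1_0} --- a cosmetic difference. The genuine divergence is in the last assertion: the paper gets $D\simeq M_n(B)$ from simplicity, $\dim_B D=n^2$ and the nilpotency of $1\ot y$ of order $n$ (implicitly, Wedderburn structure theory), whereas you construct an explicit faithful representation $\rho\colon D\to\End_B(A)$, with $A$ acting by left multiplication and $y$ by $\gamma^1_0$, and finish with a dimension count. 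Your route produces the isomorphism explicitly and shows that simplicity is not even needed for this part (only $h=n$ and $B$ a field), at the cost of a slightly longer verification. One small point of rigor: defining the $D$-module structure on $A$ by prescribing the action of the generators and ``checking the relations'' tacitly uses that $y^n=0$ and $ya=\sum_j\gamma^1_j(a)y^j$, together with the relations of $A$, present $D$; this is easily repaired either by noting that your action is exactly the canonical one on $D/Dy$ (a left $D$-module since $Dy$ is a left ideal), or by checking multiplicativity of $\rho$ directly on the spanning set $\{a\ot y^i\}$, where it reduces to the Product law for the maps $\gamma^i_0$ from Proposition~\ref{equivalencias de ser un twisting map}. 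With that remark, your argument is complete.
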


\begin{proof} Suppose that $D$ is simple. Since $\gamma^h_0 = 0$, it follows from Proposition~\ref{simplicidad} that $h = n$ and
$$
Ab = ABb = AbB = Ab\gamma^{n-1}_0(A) = A,
$$
for all $b\in B\setminus\{0\}$. Hence, $B$ is a field by Remark~\ref{ker gamma^1_0}. Conversely, if $B$ is a field and $h=n$, then
$$
Ab\gamma^{n-1}_0(A) = AbB = AB = A,
$$
and so, again by Proposition~\ref{simplicidad}, the algebra $D$ is simple. The last assertion follows immediately from the fact that $\dim_B (D) = n^2$ and $1\ot y$ is nilpotent of order~$n$.
\end{proof}

\section{Upper triangular twisting maps}
The aim of this section is to study twisting maps $s\colon C\otimes A\to A\otimes C$ with $\gamma_0^1=0$. Under this assumption the low dimensional Hochschild cohomology plays a prominent role. The obstructions to inductively construct twisting maps are cohomology classes. For the sake of simplicity, given a twisting map with $\gamma_0^1=0$, we set $\alpha:=\gamma_1^1$. Moreover, we let $\alpha^m$ denote the $m$-fold composition of $\alpha$ with itself. Note that formula~\eqref{eq formula para gamma_k^j} implies $\gamma^i_j=0$ for $j<i$ and $\gamma^i_i=\alpha^i$. In particular $M$ is upper triangular. Therefore, as in the introduction, we call these twisting maps and the corresponding twisted products, upper triangular. Note moreover that, by the Product law, $\alpha$ is an algebra endomorphism. Throughout this section $\Z(A)$ denotes the center of $A$ and we set $\Delta_j:=\alpha -\alpha^j$.

\smallskip

From now on we set $C_n:=k[y]/\langle y^n\rangle$, and we let ${}_{\alpha}A_{\alpha^n}$ denote the $k$-module $A$ endowed with the $A$-bimodule structure given by $a\cdot b \cdot c := \alpha(a)b\alpha^n(c)$.

\begin{theorem}\label{obstruccion en la cohomologia} Let $s_n\colon C_n\ot A \to A\ot C_n$ be an upper triangular twisting map and let $\gamma^i_j$ ($i\ge 0$ and $0\le j< n$) be the family of $k$-linear endomorphisms of $A$ associated with $s_n$. Consider the map $F\colon A\ot A\to {}_{\alpha}A_{\alpha^n}$, defined by
$$
F(a \ot b) := \sum_{i=2}^{n-1}\gamma^1_i(a)\gamma^i_n(b),
$$
where
\begin{equation}
\gamma^i_n:= \sum_{u_1,\dots,u_i\ge 1\atop u_1+\cdots+u_i=n}\gamma^1_{u_1}\xcirc\cdots\xcirc \gamma^1_{u_i} \qquad \text{for $i\ge 2$.}\label{sumamayoig1}
\end{equation}
Then, $F$ is a normalized cocycle in the canonical Hochschild cochain complex of $A$ with coefficients in ${}_{\alpha} A_{\alpha^n}$. Moreover, there exists an upper triangular twisting map
$$
s_{n+1}\colon C_{n+1}\ot A \to A \ot C_{n+1},
$$
with the same $\gamma^1_j$'s as $s_n$ for $j=0,\dots,n - 1$, if and only if $[F]=0$ in $H^2(A,{}_{\alpha} A_{\alpha^n})$. In this case $F=-\mathrm{b}^2(\gamma^1_n)$, where $\mathrm{b}^2$
is the Hochschild coboundary.

\end{theorem}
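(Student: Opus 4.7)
The plan is to apply Corollary~\ref{gamma como suma} with $n$ replaced by $n+1$: an extension $s_{n+1}$ is uniquely determined by specifying a single $k$-linear map $\gamma^1_n\colon A\to A$ with $\gamma^1_n(1)=0$, provided that (i) $\gamma^{n+1}_j=0$ for $0\le j\le n$, and (ii) the Product law holds for each $\gamma^1_j$ with $0\le j\le n$. Condition~(i) is automatic from formula~\eqref{eq formula para gamma_k^j} and $\gamma^1_0=0$: every summand of $\gamma^{n+1}_j$ would have to be an $(n+1)$-fold composition $\gamma^1_{u_1}\circ\cdots\circ\gamma^1_{u_{n+1}}$ with $u_s\ge 1$ and $\sum u_s=j\le n$, which is impossible. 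The same count shows $\gamma^n_j=0$ for $j<n$, so the Product law for $j<n$ coincides with the one already satisfied by $s_n$. Hence the only genuinely new condition is the Product law for $\gamma^1_n$.

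Writing this out, using $\gamma^1_0=0$, $\gamma^1_1=\alpha$, and $\gamma^n_n=\alpha^n$ (both from formula~\eqref{eq formula para gamma_k^j}), the contributions to $\sum_{i=0}^{n}\gamma^1_i(a)\gamma^i_n(b)$ from $i=0,1,n$ are $0$, $\alpha(a)\gamma^1_n(b)$ and $\gamma^1_n(a)\alpha^n(b)$, and the middle terms $2\le i\le n-1$ are exactly $F(a\otimes b)$. The Product law for $\gamma^1_n$ therefore reads
\[
\gamma^1_n(ab)-\alpha(a)\gamma^1_n(b)-\gamma^1_n(a)\alpha^n(b)=F(a\otimes b),
\]
i.e.\ $-\mathrm{b}^2(\gamma^1_n)=F$ in the Hochschild cochain complex with coefficients in ${}_{\alpha}A_{\alpha^n}$. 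This proves simultaneously that the extension exists iff $[F]=0$ in $H^2(A,{}_{\alpha}A_{\alpha^n})$ and that $F=-\mathrm{b}^2(\gamma^1_n)$ whenever an extension is constructed.

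It remains to prove that $F$ is a normalized 2-cocycle independently of whether an extension exists. Normalization is immediate: $F(1\otimes a)=0$ because $\gamma^1_i(1)=\delta_{1i}$ and $i\ge 2$ in the defining sum, and $F(a\otimes 1)=0$ because $\gamma^i_n(1)=\delta_{in}$ (from formula~\eqref{eq formula para gamma_k^j}) and $i\le n-1$. For the cocycle identity, expand $F(ab\otimes c)$ via the Product law for each $\gamma^1_i$ (valid since $i\le n-1$), cancel the $\alpha(a)F(b\otimes c)$ contribution, and interchange summations to obtain
\[
\mathrm{b}^3(F)(a\otimes b\otimes c)=\sum_{i=2}^{n-1}\gamma^1_i(a)\Bigl(\gamma^i_n(bc)-\sum_{j=i}^{n-1}\gamma^i_j(b)\gamma^j_n(c)-\gamma^i_n(b)\alpha^n(c)\Bigr).
\]
The cocycle condition thus reduces to the \emph{internal Product law}
\[
\gamma^i_n(bc)=\sum_{j=i}^{n}\gamma^i_j(b)\gamma^j_n(c)\qquad(2\le i\le n,\ \gamma^n_n:=\alpha^n),
\]
which I expect to be the main obstacle. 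I would prove it by direct computation: use the Composition law $\gamma^i_n=\sum_{u=1}^{n-1}\gamma^1_u\circ\gamma^{i-1}_{n-u}$ (a consequence of formula~\eqref{eq formula para gamma_k^j}), apply the Product law for $\gamma^{i-1}_{n-u}$ and for $\gamma^1_u$ (both valid since the indices are $<n$), and then reassemble the resulting triple sum via the formal Composition identities $\sum_{p+k=j}\gamma^1_p\circ\gamma^{i-1}_k=\gamma^i_j$ and $\sum_u\gamma^p_u\circ\gamma^k_{n-u}=\gamma^{p+k}_n$ to get the claim. The delicate part is the index bookkeeping, but the whole calculation stays inside the data of $s_n$ because formula~\eqref{eq formula para gamma_k^j} defines $\gamma^i_n$ for $i\ge 2$ using only $\gamma^1_u$ with $u\le n-1$.
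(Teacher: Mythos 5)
Your proposal is correct and follows essentially the same route as the paper: existence of $s_{n+1}$ is reduced via Corollary~\ref{gamma como suma} to the Product law for a single map $\gamma^1_n$ (with $\gamma^{n+1}_j=0$ and the Product law for $j<n$ automatic from upper triangularity), which is exactly the equation $-\mathrm{b}^2(\gamma^1_n)=F$, and the cocycle property of $F$ is verified by expanding with the Product law of $s_n$. The only difference is presentational: the ``internal Product law'' $\gamma^i_n(bc)=\sum_{j=i}^{n}\gamma^i_j(b)\gamma^j_n(c)$ that you isolate as the key lemma (and whose sketched proof by grouping $\gamma^i_n=\sum_u\gamma^1_u\xcirc\gamma^{i-1}_{n-u}$, applying the Product law for indices $<n$, and reassembling does go through) is used silently by the paper as one step in its chain of equalities for $\mathrm{b}^3(F)$.
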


\begin{proof} It is easy to check that $\gamma^i_n(1)=0$ for $1<i<n$. Hence $F$ is normal. We next prove that it is a cocycle. In fact
\allowdisplaybreaks
\begin{align*}
\mathrm{b}^3(F)(a\ot b \ot c) = &\alpha(a)F(b\ot c) - F(ab\ot c) + F (a\ot bc) + F(a\ot b)\alpha^n(c)\\
= & \sum_{i=2}^{n-1} \gamma^1_1(a)\gamma^1_i(b)\gamma^i_n(c) - \sum_{i=2}^{n-1}\gamma^1_i(ab)\gamma^i_n(c)\\
& + \sum_{i=2}^{n-1}\gamma^1_i(a)\gamma^i_n(bc) - \sum_{i=2}^{n-1}\gamma^1_i(a)\gamma^i_n(b)\gamma^n_n(c)\\
= & \sum_{i=2}^{n-1} \gamma^1_1(a)\gamma^1_i(b)\gamma^i_n(c) -\sum_{i=2}^{n-1}\sum_{l=1}^i \gamma^1_l(a)\gamma^l_i(b) \gamma^i_n(c) \\
& + \sum_{i=2}^{n-1}\sum_{l=i}^n\gamma^1_i(a)\gamma^i_l(b)\gamma^l_n(c) - \sum_{i=2}^{n-1}\gamma^1_i(a)\gamma^i_n(b) \gamma^n_n(c)\\
= & -\sum_{i=2}^{n-1}\sum_{l=2}^i\gamma^1_l(a)\gamma^l_i(b)\gamma^i_n(c) + \sum_{i=2}^{n-1}\sum_{l=i}^{n-1} \gamma^1_i(a) \gamma^i_l(b) \gamma^l_n(c)\\
= &\, 0.
\end{align*}
Now, note that, since $\gamma^1_0=0$,
$$
\sum_{u_1,\dots,u_i\ge 0\atop u_1+\cdots+u_i=n}\gamma^1_{u_1}\xcirc\cdots\xcirc \gamma^1_{u_i},
$$
is well defined for each $i\ge 2$ (independently of the value assigned to $\gamma^1_n$), and gives $\gamma^i_n$. Hence, we can use Corollary~\ref{gamma como suma} to conclude that there exists $s_{n+1}$ satisfying the required conditions, if and only if there is a $k$-linear map $\gamma^1_n:A\to A$ fulfilling
\begin{equation}
\gamma_n^1(ab)=\gamma^1_1(a)\gamma^1_n(b)+\gamma^1_n(a)\gamma^n_n(b)+ \sum_{i=2}^{n-1} \gamma^1_i(a)\gamma^i_n(b), \label{eqqq}
\end{equation}
or, equivalently, $\mathrm{b}^2(\gamma^1_n)(a\ot b) = -F(a\ot b)$. In fact, the maps $\gamma^i_j$ ($i\ge 2$ and $j<n$) are the same as for $s_n$, and so, if $a,b\in A$ and $j<n$, then
\begin{align*}
\gamma_j^1(ab) & = \sum_{i=0}^{n-1} \gamma_i^1(a)\gamma^i_j(b)  &&\text{since $s_n$ is a twisting map} \\
& = \sum_{i=0}^n \gamma_i^1(a)\gamma^i_j(b) &&\text{because $\gamma^n_j = 0$,}
\end{align*}
while the equality
$$
\gamma_n^1(ab) = \sum_{i=0}^n \gamma_i^1(a)\gamma^i_n(b) \quad\text{for $a,b\in A$}
$$
is the same as~\eqref{eqqq}.
\end{proof}

Next, we are going to describe the first serious obstruction to construct an upper triangular twisting map, when $\alpha$ is the identity map. In the following result $\delta_1^j$ means the $j$-fold composition of $\delta_1$.

\begin{corollary} Under the assumptions of Theorem~\ref{obstruccion en la cohomologia}, if $\alpha=\ide$, then the following facts hold:

\begin{enumerate}

\smallskip

\item Any derivation $\delta_1$ of $A$ defines a twisting map $s_3\colon C_3\ot A\to A\ot C_3$, via $\gamma^1_0:=0$, $\gamma^1_1:=\ide$ and $\gamma^1_2:=\delta_1$. Moreover, all upper triangular twisting maps from $C_3\ot A$ to $A\ot C_3$, with $\gamma^1_1=\ide$, are of this type.
\smallskip

\item Any pair of derivations $\delta_1,\delta_2$ of $A$ gives a twisting map $s_4\colon C_4\ot A\to A\ot C_4$, via $\gamma^1_0:=0$, $\gamma^1_1:=\ide$, $\gamma^1_2:=\delta_1$ and $\gamma^1_3:=\delta_1^2+\delta_2$. Moreover, all upper triangular twisting maps from $C_4\ot A$ to $A\ot C_4$, with $\gamma^1_1=\ide$, are of this type.

\smallskip

\item Let $\delta_1$ and $\delta_2$ be derivatons of $A$. Consider the upper triangular twisting map $s_4\colon C_4\ot A\to A\ot C_4$, defined by $\delta_1$ and $\delta_2$. Then, there exists an upper triangular twisting map \mbox{$s_5\colon C_5\ot A\to A\ot C_5$}, with the same $\gamma^1_1$, $\gamma^1_2$ and $\gamma^1_3$ as $s_4$, if and only if $[\delta_1]\cup [\delta_2]=0$ in $H^2(A,A)$.

\end{enumerate}
\end{corollary}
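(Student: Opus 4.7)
The plan is to apply Theorem~\ref{obstruccion en la cohomologia} with $n=4$ to the twisting map $s_4$ produced in item~(2). Since $\alpha=\ide$, the coefficient bimodule ${}_{\alpha}A_{\alpha^n}$ is just $A$ with its standard structure, and the existence of an extension $s_5$ with the same $\gamma^1_j$ for $j\le 3$ is equivalent to the vanishing of a single class $[F]\in H^2(A,A)$. The goal is to identify this class as $-[\delta_1]\cup[\delta_2]$.

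First I would compute $\gamma^2_4$ and $\gamma^3_4$ via formula~\eqref{sumamayoig1}. Enumerating the ordered compositions of $4$ into parts from $\{1,2,3\}$ and using $\gamma^1_1=\ide$, $\gamma^1_2=\delta_1$, $\gamma^1_3=\delta_1^2+\delta_2$, I get
$$
\gamma^2_4=\gamma^1_1\xcirc\gamma^1_3+\gamma^1_2\xcirc\gamma^1_2+\gamma^1_3\xcirc\gamma^1_1=3\delta_1^2+2\delta_2,\qquad \gamma^3_4=3\delta_1.
$$
Plugging this into $F(a\ot b)=\gamma^1_2(a)\gamma^2_4(b)+\gamma^1_3(a)\gamma^3_4(b)$ and grouping the summands as cup products $(f\cup g)(a\ot b):=f(a)g(b)$, I obtain
$$
F=3(\delta_1\cup\delta_1^2)+2(\delta_1\cup\delta_2)+3(\delta_1^2\cup\delta_1)+3(\delta_2\cup\delta_1).
$$

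Next I would produce two cochain-level coboundary identities. The iterated Leibniz rule $\delta_1^3(ab)=\sum_{k=0}^{3}\binom{3}{k}\delta_1^k(a)\delta_1^{3-k}(b)$, valid because $\delta_1$ is a derivation, gives $\mathrm{b}^2(\delta_1^3)=-3(\delta_1\cup\delta_1^2)-3(\delta_1^2\cup\delta_1)$. A direct expansion of $(\delta_1\xcirc\delta_2)(ab)$ using that both $\delta_1$ and $\delta_2$ are derivations gives $\mathrm{b}^2(\delta_1\xcirc\delta_2)=-(\delta_1\cup\delta_2)-(\delta_2\cup\delta_1)$. Using the first identity to rewrite $3(\delta_1\cup\delta_1^2)+3(\delta_1^2\cup\delta_1)=-\mathrm{b}^2(\delta_1^3)$, and the second to rewrite $3(\delta_2\cup\delta_1)=-3(\delta_1\cup\delta_2)-3\,\mathrm{b}^2(\delta_1\xcirc\delta_2)$, the previous display collapses to
$$
F=-(\delta_1\cup\delta_2)-\mathrm{b}^2(\delta_1^3+3\,\delta_1\xcirc\delta_2),
$$
so $[F]=-[\delta_1]\cup[\delta_2]$ in $H^2(A,A)$. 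The equivalence claimed in the corollary is then immediate from Theorem~\ref{obstruccion en la cohomologia}.

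The main obstacle is interpretational: the cochains $\delta_1\cup\delta_1^2$ and $\delta_1^2\cup\delta_1$ appearing in $F$ do not represent products of cohomology classes, because $\delta_1^2$ is not a derivation and hence not a Hochschild $1$-cocycle. The remedy is to keep them paired, so that the universal Leibniz identity for $\delta_1^3$ realises their sum as a coboundary; the leftover combination $2(\delta_1\cup\delta_2)+3(\delta_2\cup\delta_1)$ is then handled by the standard cochain-level graded commutativity of the cup product of derivations. No restriction on the characteristic of $k$ is needed, since the coefficient $3$ in the coboundary identities matches exactly the coefficient $3$ appearing in $F$.
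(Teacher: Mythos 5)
Your treatment of item~(3) is correct and follows essentially the paper's own route: you compute $\gamma^2_4=3\delta_1^2+2\delta_2$ and $\gamma^3_4=3\delta_1$ exactly as the paper does, and your cochain identity $F=-(\delta_1\cup\delta_2)-\mathrm{b}^2(\delta_1^3+3\,\delta_1\circ\delta_2)$ is equivalent to the paper's $F=-\mathrm{b}^2(\delta_1^3+2\,\delta_1\circ\delta_2)+\delta_2\cup\delta_1$, because $\delta_2\cup\delta_1=-\delta_1\cup\delta_2-\mathrm{b}^2(\delta_1\circ\delta_2)$; either way $[F]=-[\delta_1]\cup[\delta_2]$, and Theorem~\ref{obstruccion en la cohomologia} gives the stated equivalence.

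The genuine gap is that the corollary has three items and you prove only the third, while explicitly assuming the second (``the twisting map $s_4$ produced in item~(2)''). Items~(1) and~(2), including their ``moreover'' classification clauses, are part of the statement and are needed for item~(3): one must know that $\delta_1,\delta_2$ really do define an $s_4$, and that any upper triangular $s_4$ with $\gamma^1_1=\ide$ has $\gamma^1_2$ a derivation and $\gamma^1_3=\delta_1^2+\delta_2$. Both follow from the same theorem in low degrees, and you should include these computations: for $n=2$ the sum defining $F$ is empty, so $F=0$ and an extension $s_3$ with prescribed $\gamma^1_2$ exists if and only if $\mathrm{b}^2(\gamma^1_2)=0$, i.e. $\gamma^1_2$ is a derivation, which is item~(1) together with its converse; for $n=3$ one has $\gamma^2_3=\gamma^1_1\circ\gamma^1_2+\gamma^1_2\circ\gamma^1_1=2\delta_1$, hence $F=2\,\delta_1\cup\delta_1=-\mathrm{b}^2(\delta_1^2)$, so a map $\gamma^1_3$ defines an extension $s_4$ if and only if $\mathrm{b}^2(\gamma^1_3-\delta_1^2)=0$, i.e. $\gamma^1_3=\delta_1^2+\delta_2$ with $\delta_2$ a derivation, which is item~(2). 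With these two short additions the argument is complete; the item-(3) part itself needs no repair.
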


\begin{proof} Let $F\colon A\ot A\to A$ be as in Theorem~\ref{obstruccion en la cohomologia}. When $n=2$, we have $F=0$. When $n=3$, a direct computation shows that $F=2\delta_1\cup\delta_1= -\mathrm{b}^2(\delta_1^2)$. Finally, when $n=4$, we have:
\begin{align*}
F(a\ot b)&= \gamma^1_2(a)\gamma^2_4(b)+\gamma^1_3(a)\gamma^3_4(b)\\
&=\delta_1(a)(3\delta_1^2(b)+2\delta_2(b))+ 3(\delta_1^2(a)+\delta_2(a))\delta_1(b)\\
&=-\mathrm{b}^2(\delta_1^3+2 \delta_1 \xcirc \delta_2)(a\ot b)+(\delta_2\cup \delta_1)(a\ot b).
\end{align*}
Items (1), (2) and (3) follow now immediately from Theorem~\ref{obstruccion en la cohomologia}.
\end{proof}

\begin{proposition}\label{unicidaddos} Let $\alpha$ be an endomorphism of $A$. Assume that there exist $b_2,\dots, b_{n-1}\in A$ such that $\Delta_{2}(b_2),\Delta_{3}(b_3),\dots,\Delta_{n-1}(b_{n-1})$ are not zero divisors and $\{b_j,\alpha(b_j):j=2,\dots, n-1\}\subseteq \Z (A)$. Then, given elements $a_2,\dots,a_{n-1}\in A$,  there is at most one upper triangular twisting map $s\colon C_n\otimes A\to A\otimes C_n$ with $\gamma_1^1=\alpha$ and $\gamma^1_j(b_j) =a_j$ for $j=2,\dots,n-1$.
\end{proposition}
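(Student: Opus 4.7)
The plan is to prove the stronger statement that if $s$ and $s'$ are two upper triangular twisting maps satisfying the hypotheses, then the associated maps $\gamma^1_j$ and $(\gamma')^1_j$ agree for $j=0,\dots,n-1$. This suffices because an upper triangular twisting map is completely recovered from $(\gamma^1_0,\dots,\gamma^1_{n-1})$ via formula~\eqref{eq formula para gamma_k^j}. I proceed by induction on $j$; the base cases $j=0,1$ are immediate since $\gamma^1_0=0$ and $\gamma^1_1=\alpha$ are imposed.

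Fix $j\in\{2,\dots,n-1\}$ and assume the claim for all $u<j$. Two observations drive the induction. First, by formula~\eqref{eq formula para gamma_k^j} together with $\gamma^1_0=0$, for $i\ge 2$ the map $\gamma^i_j$ is a sum of compositions $\gamma^1_{u_1}\xcirc\cdots\xcirc\gamma^1_{u_i}$ with each $u_r\ge 1$ and $u_1+\cdots+u_i=j$; in particular every $u_r\le j-1$, so $\gamma^i_j$ is already pinned down for $2\le i<j$ by the inductive hypothesis, while $\gamma^j_j=\alpha^j$ and $\gamma^i_j=0$ for $i>j$. Second, $\gamma^1_j(b_j)=a_j$ is given, so the whole family $\{\gamma^i_j(b_j):1\le i\le j\}$ is already known.

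Now fix $a\in A$ and apply the Product law to each side of the identity $ab_j=b_ja$ (valid because $b_j\in\Z(A)$):
$$
\sum_{i=1}^{j}\gamma^1_i(a)\gamma^i_j(b_j) = \gamma^1_j(ab_j) = \gamma^1_j(b_ja) = \sum_{i=1}^{j}\gamma^1_i(b_j)\gamma^i_j(a).
$$
Splitting off the extreme indices $i=1$ and $i=j$ on both sides, using $\gamma^1_1=\alpha$, $\gamma^j_j=\alpha^j$ and $\gamma^1_j(b_j)=a_j$, and bringing the two occurrences of $\gamma^1_j(a)$ to the left yields
$$
\gamma^1_j(a)\alpha^j(b_j) - \alpha(b_j)\gamma^1_j(a) = R,
$$
where $R$ is an expression in $a$ depending only on the inductively determined data $\alpha$, $a_2,\dots,a_j$ and $b_2,\dots,b_j$. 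The hypothesis $\alpha(b_j)\in\Z(A)$ lets us commute $\alpha(b_j)$ past $\gamma^1_j(a)$, so the left side collapses to $-\gamma^1_j(a)\Delta_j(b_j)$, giving $\gamma^1_j(a)\Delta_j(b_j)=-R$. Since $\Delta_j(b_j)$ is not a zero divisor, $\gamma^1_j(a)$ is uniquely determined by this equation, closing the induction.

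The delicate point is the centrality step: we are not assuming $\alpha^j(b_j)\in\Z(A)$, and the role of the hypothesis $\alpha(b_j)\in\Z(A)$ is precisely to allow the two $\gamma^1_j(a)$ terms on the left to be combined into a single right multiplication by $\Delta_j(b_j)=\alpha(b_j)-\alpha^j(b_j)$. Once this combination has been achieved, the non-zero-divisor hypothesis yields uniqueness and the remainder is bookkeeping.
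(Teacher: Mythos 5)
Your proof is correct, and it rests on exactly the same computation as the paper's, but packaged without the cohomological machinery. The paper runs the same induction on the truncation level and reduces the step from $s_j$ to $s_{j+1}$ via Theorem~\ref{obstruccion en la cohomologia}: two candidate maps $\gamma^1_j$ have the same Hochschild coboundary, so their difference $\delta$ is an $(\alpha,\alpha^j)$-derivation vanishing at $b_j$, and evaluating $\delta$ on $ab_j=b_ja$ gives $\delta(a)\alpha^j(b_j)=\alpha(b_j)\delta(a)=\delta(a)\alpha(b_j)$, hence $\delta(a)\Delta_j(b_j)=0$ and $\delta=0$. You instead apply the Product law directly to $ab_j=b_ja$ and solve for $\gamma^1_j(a)$ against $\Delta_j(b_j)$; your remainder $R$ is precisely the inductively known material that cancels automatically in the paper's argument when one subtracts the equations for the two twisting maps. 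Both routes use the same three ingredients: centrality of $b_j$ and of $\alpha(b_j)$, the fact (from \eqref{eq formula para gamma_k^j} with $\gamma^1_0=0$) that $\gamma^j_j=\alpha^j$ and that $\gamma^i_j$ for $2\le i<j$ involves only $\gamma^1_u$ with $u\le j-1$, and the hypothesis that $\Delta_j(b_j)$ is not a zero divisor. What your version buys is independence from Theorem~\ref{obstruccion en la cohomologia} (you need only the Product law and \eqref{eq formula para gamma_k^j}); what the paper's version buys is brevity, since the $1$-cocycle language absorbs the bookkeeping of the middle terms. The only point worth stating explicitly in your write-up is that ``uniquely determined'' at the end means: if $X\Delta_j(b_j)=X'\Delta_j(b_j)$ then $X=X'$ because $\Delta_j(b_j)$ is not a right annihilator of a nonzero element, which is the same use of the non-zero-divisor hypothesis as in the paper.
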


\begin{proof} Assume that there is a twisting map $s$ that satisfies the hypothesis. Since $\gamma^1_0=0$, it follows easily from Corollary~\ref{gamma como suma} that, for each $j\le n$, the maps $\gamma^1_0,\dots, \gamma^1_{j-1}$ define a twisting map $s_j\colon C_j\ot A \to A\ot C_j$. It is clear that in order to complete the proof we only need to check that the uniqueness of $s_j$ implies the one of $s_{j+1}$. By Theorem~\ref{obstruccion en la cohomologia}, to carry out this task it suffices to show that if $\delta\colon A\to {}_{\alpha}A_{\alpha^j}$ is a derivation (that is, a $1$-cocycle in the cochain Hochschild complex) which vanish in $b_j$, then $\delta=0$. But, from the equalities
$$
\delta(a)\alpha^j(b_j)=\delta(ab_j)=\delta(b_ja)=\alpha(b_j)\delta(a)=\delta(a)\alpha(b_j)\quad\text{for all $a\in A$,}
$$
it follows that $\delta=0$, since $\Delta_{j}(b_j)$ is not a zero divisor of $A$.
\end{proof}

\begin{lemma}\label{existencia} Let $s_n\colon C_n\ot A \to A\ot C_n$ and $F$ be as in Theorem~\ref{obstruccion en la cohomologia}, and let $b\in \Z(A)$. Define $G_b\colon A \to A$ by $G_b(a):=F(a\ot b)-F(b\ot a)$. Then,
$$
\mathrm{b}^2(G_b)(a\ot a')=F(a\ot a')\alpha^n(b)-\alpha(b)F(a\ot a').
$$
\end{lemma}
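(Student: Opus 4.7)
The plan is to expand $\mathrm{b}^2(G_b)(a\ot a')$ using the definition of the Hochschild coboundary on $1$-cochains with values in the bimodule ${}_\alpha A_{\alpha^n}$, where the left and right actions are given by $x\cdot m = \alpha(x)m$ and $m\cdot x = m\alpha^n(x)$. Thus
\begin{equation*}
\mathrm{b}^2(G_b)(a\ot a') = \alpha(a)G_b(a') - G_b(aa') + G_b(a)\alpha^n(a'),
\end{equation*}
and substituting $G_b(x) = F(x\ot b) - F(b\ot x)$ produces six terms involving $F$ evaluated on various pairs built from $a$, $a'$ and $b$.

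The key tool is the cocycle condition $\mathrm{b}^3(F) = 0$ provided by Theorem~\ref{obstruccion en la cohomologia}, which for the bimodule ${}_\alpha A_{\alpha^n}$ reads
\begin{equation*}
\alpha(x)F(y\ot z) - F(xy\ot z) + F(x\ot yz) - F(x\ot y)\alpha^n(z) = 0.
\end{equation*}
I would apply this identity three times, to the triples $(a,a',b)$, $(a,b,a')$ and $(b,a,a')$. Each specialization expresses one of the awkward terms $F(aa'\ot b)$, $F(ab\ot a')$ or $F(ba\ot a')$ in terms of the others, together with a boundary contribution of the form $\alpha(x)F(y\ot z)$ or $F(y\ot z)\alpha^n(x)$. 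Since $b\in \Z(A)$, the products $ab$, $ba$, $a'b$, $ba'$ can be freely interchanged inside $F$, which is precisely what makes the three cocycle identities combinable.

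The calculation proceeds in cascade: substituting the cocycle relation at $(a,a',b)$ into the expansion of $\mathrm{b}^2(G_b)(a\ot a')$ cancels the $\alpha(a)F(a'\ot b)$ pair and introduces $F(a\ot a'b)$ and $F(a\ot a')\alpha^n(b)$. Then the relation at $(a,b,a')$, combined with $a'b=ba'$, eliminates both $-\alpha(a)F(b\ot a')$ and $F(a\ot b)\alpha^n(a')$ and trades them for $-F(ba\ot a')+F(a\ot a'b)$, so that the $F(a\ot a'b)$ contributions cancel. What remains are exactly the three terms $-F(ba\ot a')+F(b\ot aa')-F(b\ot a)\alpha^n(a')$ plus $F(a\ot a')\alpha^n(b)$; the cocycle relation at $(b,a,a')$ identifies that sum of three terms with $-\alpha(b)F(a\ot a')$, yielding the stated formula.

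The proof is therefore essentially a bookkeeping exercise, and the only conceptual obstacle is recognizing which three specializations of the cocycle condition to use and how centrality of $b$ licenses the cancellations. No further structure of the maps $\gamma^i_j$ is needed beyond what is already encoded in the cocycle property of $F$.
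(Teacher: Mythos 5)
Your proposal is correct and follows essentially the same route as the paper: the paper proves the identity by summing $\mathrm{b}^3(F)(a\ot a'\ot b)+\mathrm{b}^3(F)(b\ot a\ot a')-\mathrm{b}^3(F)(a\ot b\ot a')$ and using centrality of $b$ to cancel $F(a\ot a'b)$ against $F(a\ot ba')$ and $F(ab\ot a')$ against $F(ba\ot a')$, which is exactly the same three specializations and cancellations you perform, only organized as a single signed sum rather than a cascade of substitutions.
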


\begin{proof} Since
\begin{align*}
b^2(G_b)(a\ot a') =& \alpha(a)F(a'\ot b)-F(aa'\ot b)+F(a\ot b)\alpha^n(a')\\
&-\alpha(a)F(b\ot a')+ F(b\ot aa')-F(b\ot a)\alpha^n(a')
\end{align*}
and $F$ is a cocycle,
\begin{align*}
0 =& \mathrm{b}^3(F)(a\ot a'\ot b)+\mathrm{b}^3(F)(b\ot a\ot a')-\mathrm{b}^3(F)(a\ot b\ot a')\\
= &\alpha(a)F(a'\ot b)-F(aa'\ot b)+F(a\ot a'b)-F(a\ot a')\alpha^n(b)\\
&+ \alpha(b)F(a\ot a')-F(ba\ot a')+F(b\ot aa')-F(b\ot a)\alpha^n(a')\\
&-\alpha(a)F(b\ot a')+F(ab\ot a')-F(a\ot ba')+F(a\ot b)\alpha^n(a')\\
=&\mathrm{b}^2(G_b)(a\ot a')+\alpha(b)F(a\ot a')-F(a\ot a')\alpha^n(b),
\end{align*}
as desired.
\end{proof}

\begin{proposition} Let $s_n\colon C_n\ot A \to A\ot C_n$ be an upper triangular twisting map and let $b\in \Z(A)$. If $\Delta_n(b)$ is invertible and $\alpha(b),\alpha^n(b)\in \Z(A)$, then, for any $a\in \Z(A)$, there is a twisting map
$$
s_{n+1}\colon C_{n+1}\ot A \to A \ot C_{n+1}
$$
with $\gamma^1_n(b)=a$ and the same $\gamma^1_j$'s as $s_n$ for $j=0,\dots,n-1$.
\end{proposition}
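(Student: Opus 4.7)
The plan is to apply Theorem~\ref{obstruccion en la cohomologia}: it suffices to show first that the obstruction class $[F]\in H^2(A,{}_{\alpha}A_{\alpha^n})$ vanishes, and then to adjust the resulting primitive so that it takes the prescribed value $a$ on $b$.

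For the vanishing of $[F]$, I would invoke Lemma~\ref{existencia}, which gives
\[
\mathrm{b}^2(G_b)(x\ot y) = F(x\ot y)\alpha^n(b) - \alpha(b)F(x\ot y),
\]
where $G_b(x) = F(x\ot b) - F(b\ot x)$. Since $\alpha(b),\alpha^n(b)\in\Z(A)$ by hypothesis, they commute with $F(x\ot y)$, and the right-hand side collapses to $-\Delta_n(b)F(x\ot y)$. The element $\Delta_n(b) = \alpha(b)-\alpha^n(b)$ is a difference of central elements and is invertible by hypothesis, hence $\Delta_n(b)^{-1}\in\Z(A)$. Pushing this central scalar through the Hochschild coboundary formula then yields $\mathrm{b}^2(-\Delta_n(b)^{-1}G_b) = F$, so $[F]=0$ and Theorem~\ref{obstruccion en la cohomologia} produces an extension $s_{n+1}$ determined by some $\gamma^1_n$ with $\mathrm{b}^2(\gamma^1_n)=-F$.

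To arrange $\gamma^1_n(b) = a$ I would use that the set of all valid $\gamma^1_n$'s is a torsor over the $k$-module of derivations $A\to{}_{\alpha}A_{\alpha^n}$: adding any $1$-cocycle $\delta$ to $\gamma^1_n$ preserves the identity $\mathrm{b}^2(\gamma^1_n)=-F$ and hence still defines a twisting map by Theorem~\ref{obstruccion en la cohomologia}. For each $c\in A$, the inner derivation $\delta_c(y) := \alpha(y)c - c\alpha^n(y)$ is such a $1$-cocycle (direct check). Using the centrality of $\alpha(b)$ and $\alpha^n(b)$, one gets $\delta_c(b) = (\alpha(b)-\alpha^n(b))c = \Delta_n(b)\,c$. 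Choosing $c := \Delta_n(b)^{-1}(a-\gamma^1_n(b))$ therefore yields $\delta_c(b) = a - \gamma^1_n(b)$, so $\gamma^1_n + \delta_c$ is a primitive of $-F$ sending $b$ to $a$, producing the required $s_{n+1}$.

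No step is genuinely hard; the only thing to watch is that every scalar manipulation — both moving $\Delta_n(b)^{-1}$ across the Hochschild coboundary formula and solving $\Delta_n(b)c = a - \gamma^1_n(b)$ for $c$ — requires $\Delta_n(b)$ to be central so that left and right multiplication agree on the bimodule-valued maps. This is exactly what the stronger assumption $\alpha(b),\alpha^n(b)\in\Z(A)$ (rather than merely $b\in\Z(A)$) secures.
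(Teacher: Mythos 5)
Your proposal is correct and is essentially the paper's own argument: both hinge on Lemma~\ref{existencia} together with the centrality of $\alpha(b)$ and $\alpha^n(b)$ to see that $\Delta_n(b)^{-1}G_b$ is a primitive of $-F$, and your correcting cocycle $\delta_c(y)=\alpha(y)c-c\alpha^n(y)$ with $c$ central is exactly the paper's extra term $a\Delta_n(b)^{-1}\Delta_n$ (note $\Delta_n=\mathrm{b}^1(1)$ and $G_b(b)=0$). The only difference is packaging: the paper writes the primitive $\gamma^1_n:=a\Delta_n(b)^{-1}\Delta_n+\Delta_n(b)^{-1}G_b$ in one step and checks $\gamma^1_n(b)=a$, whereas you first kill the obstruction and then adjust by an inner $(\alpha,\alpha^n)$-derivation.
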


\begin{proof} Set $\gamma^1_n:= a\Delta_n(b)^{-1}\Delta_n + \Delta_n(b)^{-1}G_b$, where $G_b$ is the map introduced in Lemma~\ref{existencia}. Notice that
$$
\gamma^1_n(b)= a + G_b(b)=a.
$$
Using lemma~\ref{existencia} it is easy  to check that this map fulfills $\mathrm{b}^2(\gamma^1_n)=-F$, where $\mathrm{b}^2$ is the coboundary of the Hochschild complex of $A$ with coefficients in ${}_{\alpha}A_{\alpha^n}$. Then Theorem~\ref{obstruccion en la cohomologia} guarantee the existence of such $s_{n+1}$.
\end{proof}

\begin{theorem}\label{existienciaunica} Let $\alpha\colon A \to A$ be an endomorphism. Assume that there exist $b_2,\dots,b_{n-1}\in \Z(A)$ such that $\alpha(b_j),\alpha^j(b_j)\in \Z(A)$ and $\Delta_{j}(b_j)$ is invertible for all~$j$. Then, given elements $a_2,\dots,a_{n-1}\in \Z(A)$, there is a unique upper triangular twisting map
$$
s\colon C_n\otimes A\to A\otimes C_n,
$$
with ${\gamma_1^1}=\alpha$ and $\gamma^1_j(b_j)=a_j$, for $j=2,\dots,n-1$.
\end{theorem}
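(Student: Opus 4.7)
The plan is to split the argument into uniqueness and existence, and to handle existence by an induction on the truncation index that repeatedly invokes the extension proposition immediately preceding the theorem.

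For uniqueness, I would simply apply Proposition~\ref{unicidaddos}. Its hypotheses are exactly what we have: since each $\Delta_j(b_j)$ is invertible in $A$, it is certainly not a zero divisor, and the centrality assumptions on $b_j$ and $\alpha(b_j)$ are part of the statement. Hence any upper triangular twisting map $s\colon C_n\otimes A\to A\otimes C_n$ with $\gamma_1^1=\alpha$ and $\gamma_j^1(b_j)=a_j$ is uniquely determined.

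For existence, I would build $s_j\colon C_j\otimes A\to A\otimes C_j$ for $j=2,3,\dots,n$ inductively. The base case $j=2$ is immediate from Corollary~\ref{gamma como suma}: set $\gamma_0^1:=0$ and $\gamma_1^1:=\alpha$; the Product law for $\gamma_1^1$ reduces to the fact that $\alpha$ is a $k$-algebra endomorphism, and the condition $\gamma_j^2=0$ for $j<2$ is automatic because $\gamma_0^1=0$ (cf.\ the remark following Corollary~\ref{gamma como suma}). For the inductive step, assume $s_j$ has been constructed with the prescribed values $\gamma_i^1(b_i)=a_i$ for $2\le i<j$. Since $b_j\in\Z(A)$, $\alpha(b_j),\alpha^j(b_j)\in\Z(A)$ and $\Delta_j(b_j)$ is invertible, and since $a_j\in\Z(A)$, the extension proposition immediately preceding the theorem yields a twisting map $s_{j+1}\colon C_{j+1}\otimes A\to A\otimes C_{j+1}$ that agrees with $s_j$ in the first $j$ components and satisfies $\gamma_j^1(b_j)=a_j$. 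Iterating up to $j=n-1$ produces the desired $s=s_n$.

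The potential obstacle is simply making sure that at each step of the induction the hypotheses of the extension proposition are actually available. Two things need checking along the way: first, that the map $s_j$ built at stage $j$ is genuinely upper triangular, which is inherited at each step because we never modify $\gamma_0^1=0$ and the formula~\eqref{eq formula para gamma_k^j} then forces $\gamma_i^l=0$ for $l<i$ throughout; second, that the value $\gamma_j^1(b_j)$ prescribed at the next stage is central, which is precisely the hypothesis $a_j\in\Z(A)$. Both are guaranteed by our assumptions, so the induction runs without obstruction, and combined with the uniqueness statement this yields the theorem.
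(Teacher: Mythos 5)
Your proposal is correct and matches the paper's own argument: uniqueness via Proposition~\ref{unicidaddos} (invertibility of $\Delta_j(b_j)$ giving non-zero-divisors), and existence by induction on the truncation index using the proposition immediately preceding the theorem, with the base case handled via Corollary~\ref{gamma como suma}. The paper states this proof only in one line, so your write-up is simply a more explicit version of the same route.
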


\begin{proof} The uniqueness it follows immediately from Proposition~\ref{unicidaddos},  and the existence can be checked easily by induction on $n$, using the previous proposition.
\end{proof}

\begin{example} It is not difficult to find examples in which the hypothesis of the previous theorem are fulfilled, for instance, we have the following cases in which $b_j=b$ for all $j$:

\begin{enumerate}

\item Let $k$ be a characteristic zero field, $A:=k[x_1,\dots,x_r]$ and take $b_j =x_1$ for all $j$ in the theorem. Let $\alpha$ be an algebra morphism with $\alpha(x_1)=x_1+\lambda$, for some $\lambda\in k\setminus\{0\}$. Then $\Delta_{j}(b)=(1-j)\lambda$ is invertible.

\item Let $K/k$ be a field extension and assume there exists an $\alpha\in Gal(K/k)$ and $b\in K$ such that $b\ne\alpha^i(b)$ for $1\le i\le n-2$. Then $\Delta_{j}(b)$ is invertible for $1<j<n$.
\end{enumerate}
\end{example}

\begin{remark} For upper triangular twisting maps, the map $\alpha := \gamma^1_1$ seems more important than in the case $\gamma^1_0\ne 0$. For instance, it is an endomorphism of algebras, moreover it is easy to check that if $\alpha$ is an injective map, then $s$ is also injective; and that if $s$ is surjective, then so is $\alpha$. Hence, when $A$ is finite dimensional, $s$ is bijective if and only if $\alpha$ is.
\end{remark}

\begin{remark}\label{twisted extensions by power series} Let $A$ be a $k$-algebra. Consider the $k$-module $A[[y]]$ consisting of the power series with coefficients in $A$. It is easy to check that having an associative and unitary algebra structure on $A[[y]]$ such that:

\begin{itemize}

\smallskip

\item[-] $A$ and $k[[y]]$ are unitary subalgebras of $A[[y]]$,

\smallskip

\item[-] $\left(\sum_{i=0}^{\infty}a_iy^i\right)y = \sum_{i=0}^{\infty}a_iy^{i+1}$,

\smallskip

\item[-] The canonical surjection $A[[y]]\to A$ is a morphism of algebras,

\smallskip

\end{itemize}
which we call an {\em upper triangular formal extension of $A$}, is the same that having a associative and unitary algebra structure on each $A[y]/\langle y^n\rangle$ such that

\begin{itemize}

\smallskip

\item[-] $A$ and $k[y]/\langle y^n\rangle$ are unitary subalgebras of $A[y]/\langle y^n\rangle$,

\smallskip

\item[-] The multiplication map takes $a\ot y^i$ to $ay^i$,

\smallskip

\item[-] The canonical surjection $A[y]/\langle y^n\rangle \to A$ is a morphism of algebras,

\smallskip

\end{itemize}
in such a way that the canonical maps
$$
\pi_n\colon \frac{A[y]}{\langle y^{n+1}\rangle}\to \frac{A[y]}{\langle y^n\rangle}
$$
are $k$-algebra morphisms. Hence, in order to construct such an algebra structure on $A[[y]]$, Theorem~\ref{obstruccion en la cohomologia} can be applied.
\end{remark}

In particular, we have the following result, which shows the close relationship between formal deformations and formal extensions (compare with~\cite[p.64]{G}).

\begin{corollary} If $H^2(A,{}_{\alpha}A_{\alpha^n}) = 0$ for all endomorphism $\alpha$ of $A$ and all $n\in \mathds{N}$, then any upper triangular truncated polynomial extension can be extended to an upper triangular formal extension.
\end{corollary}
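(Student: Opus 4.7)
The plan is to apply Theorem~\ref{obstruccion en la cohomologia} iteratively to extend the given truncated extension one level at a time, and then invoke Remark~\ref{twisted extensions by power series} to package the resulting compatible tower as a formal extension.

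Let $s_n\colon C_n\ot A\to A\ot C_n$ be a given upper triangular truncated polynomial extension, set $\alpha:=\gamma^1_1$, and note that $\alpha$ persists through every extension step because each step retains the maps $\gamma^1_j$ for $j$ strictly below the current level. First I would apply Theorem~\ref{obstruccion en la cohomologia} at level $n$: the obstruction to producing an extension $s_{n+1}\colon C_{n+1}\ot A\to A\ot C_{n+1}$ with the same $\gamma^1_j$ for $j<n$ is represented by a normalized cocycle $F\in Z^2(A,{}_{\alpha}A_{\alpha^n})$; the vanishing hypothesis forces $[F]=0$, so $s_{n+1}$ exists. Iterating gives, by induction on $m\ge n$, a sequence of upper triangular twisting maps $s_m\colon C_m\ot A\to A\ot C_m$ such that the maps $\gamma^1_0,\dots,\gamma^1_{m-1}$ of $s_m$ coincide with those of $s_{m+1}$. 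At each step the relevant cohomology group is of the form $H^2(A,{}_{\alpha}A_{\alpha^m})$ with the same $\alpha$, so the hypothesis can be applied uniformly.

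This compatibility of the $\gamma^1_j$'s is precisely what is needed for the canonical surjection $A[y]/\langle y^{m+1}\rangle\to A[y]/\langle y^m\rangle$ to be a $k$-algebra morphism $A\ot_{s_{m+1}}C_{m+1}\to A\ot_{s_m}C_m$, because multiplication in each twisted product is entirely determined by the $\gamma^1_j$'s through formula~\eqref{eq formula para gamma_k^j} and the Product law. The other structural requirements listed in Remark~\ref{twisted extensions by power series}, namely that $A$ and $k[y]/\langle y^m\rangle$ embed as unitary subalgebras and that $(a\ot y^i)(1\ot y)=a\ot y^{i+1}$, hold automatically for any twisted tensor product of the form $A\ot_{s_m}C_m$. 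Hence the compatible tower $\bigl(A\ot_{s_m}C_m\bigr)_{m\ge n}$ yields, via that remark, an upper triangular formal extension of $A$ whose truncation modulo $y^n$ recovers the original $s_n$.

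There is no serious obstacle here: once the vanishing of $H^2(A,{}_{\alpha}A_{\alpha^m})$ is granted for every $m$, the inductive extension runs mechanically, and the passage from the tower of truncations to an algebra structure on $A[[y]]$ is exactly the content of Remark~\ref{twisted extensions by power series}. The only conceptual point worth stressing during the write-up is the persistence of $\alpha$ along the tower, which is what makes the single cohomological hypothesis suffice at every stage of the induction.
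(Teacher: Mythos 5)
Your argument is correct and is essentially the paper's own proof: the paper simply cites Theorem~\ref{obstruccion en la cohomologia} (together with the packaging of a compatible tower of truncated extensions into a formal one described in Remark~\ref{twisted extensions by power series}), and your write-up just makes the inductive step and the persistence of $\alpha=\gamma^1_1$ explicit. No gaps; the added detail is fine but not a different route.
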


\begin{proof} It follows immediately from Theorem~\ref{obstruccion en la cohomologia}.
\end{proof}

Consider now the following rigidity result for deformations~\cite[Corollary Sec. 3, p. 65]{G}: If $HH^2(A) = 0$, then A is rigid, i.e. any deformation is equivalent to the trivial one. In our setting we have to consider $HH^1(A)$, and we have to define the notion of equivalency.

\begin{definition} Two upper triangular formal extensions $A_s[[y]]$ and $A_t[[y]]$ are {\em equivalent}, if there is an algebra isomorphism
$$
\varphi\colon A_s[[y]]\to A_t[[y]],
$$
such that $\varphi(a) = a$ for all $a\in A$ and $\varphi(y)= y + R$, where $R\in A_t[[y]]y^2$.
\end{definition}

\begin{remark} It is easy to see that if $A_s[[y]]$ and $A_t[[y]]$ are equivalent, then the maps $\gamma^1_1$ determined by $s$ and $t$ coincide.
\end{remark}

Given an automorphism $\alpha$ of $A$ we let $A^{\alpha}[[y]]$ denote the unique upper triangular formal extension satisfying $ya = \alpha(a)y$ for all $a\in A$. We will name this extension the {\em trivial formal extension associated with $\alpha$}.

\smallskip

Let $A_s[[y]]$ be an upper triangular formal extension with $\gamma^1_1 = \alpha$ and let $(a_i)_{i>1}$ be a sequence of elements of $A$. Set $\{P^j_i : i\ge j\ge 1\}$ be the family of elements of $A$ recursively defined by
\begin{align*}
P^1_1 & := 1,\\
P^1_i & := a_i \text{ for $i>1$,} \\
P^j_i & := \sum_{u_1,\dots,u_j\ge 1 \atop u_1+\cdots+u_j=i} P^1_{u_1}\alpha^{u_1}(P^1_{u_2})\alpha^{u_1+u_2} (P^1_{u_3}) \cdots \alpha^{u_1+\cdots+u_{j-1}}(P^1_{u_j}).
\end{align*}
Note that in the definition of $P^j_i$ we only use $a_2,\dots,a_{i-j+1}$ and that $P^j_j = 1$ for all $j\ge 1$.

\medskip

Let $\varphi\colon A_s[[y]]\to A^{\alpha}[[y]]$ be the left $A$-linear map defined by
$$
\varphi(y) := y + a_2y^2 + a_3y^3 + a_4 y^4+\cdots\quad\text{and}\quad \varphi\left(\sum b_i y^i\right) := \sum b_i \varphi(y)^i,
$$

\begin{lemma}\label{lema} Let $v,u\in \mathds{N}$. The following assertions are equivalent:
\begin{enumerate}

\smallskip

\item $\varphi(y^va)-\varphi(y)^va\in A^{\alpha}[[y]]y^{v+u}$ for all $a\in A$.

\smallskip

\item We have
$$
P^v_i \alpha^i(a) = \sum_{j=v}^i \gamma^v_j(a) P^j_i\quad\text{for all $a\in A$ and $v\le i < u+v$.}
$$
\end{enumerate}
\end{lemma}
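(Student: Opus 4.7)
The plan is to expand both $\varphi(y)^v a$ and $\varphi(y^v a)$ as power series in $y$ with coefficients in $A$, and compare the coefficients of $y^i$ for $v\le i<v+u$. The equivalence will then be immediate since a power series lies in $A^{\alpha}[[y]]y^{v+u}$ if and only if all of its coefficients of $y^i$ with $i<v+u$ vanish.

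First, I would compute $\varphi(y)^v$. Writing $\varphi(y)=\sum_{i\ge 1}P^1_i y^i$ (with the convention $P^1_1=1$), I would use the commutation rule of the trivial extension $A^{\alpha}[[y]]$, namely $y^u b=\alpha^u(b)y^u$, to move all the $y$-factors to the right in the product
$$
\varphi(y)^v=\sum_{u_1,\dots,u_v\ge 1}P^1_{u_1}y^{u_1}P^1_{u_2}y^{u_2}\cdots P^1_{u_v}y^{u_v}.
$$
After applying the commutation rule $v-1$ times one obtains
$$
\varphi(y)^v=\sum_{u_1,\dots,u_v\ge 1}P^1_{u_1}\alpha^{u_1}(P^1_{u_2})\alpha^{u_1+u_2}(P^1_{u_3})\cdots\alpha^{u_1+\cdots+u_{v-1}}(P^1_{u_v})\,y^{u_1+\cdots+u_v},
$$
which, grouping by $i=u_1+\cdots+u_v$, gives the clean formula $\varphi(y)^v=\sum_{i\ge v}P^v_i y^i$. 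Multiplying on the right by $a\in A$ and commuting the $y$'s past $a$ yields
$$
\varphi(y)^v a=\sum_{i\ge v}P^v_i\alpha^i(a)\,y^i.
$$

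Next I would expand $\varphi(y^v a)$. Since $s_n$ is the underlying twisting map, $y^v a=\sum_{j\ge v}\gamma^v_j(a)y^j$ in $A_s[[y]]$ (recall $\gamma^v_j=0$ for $j<v$). Applying the left $A$-linear map $\varphi$ termwise and then substituting the expansion of $\varphi(y)^j$ obtained above,
$$
\varphi(y^v a)=\sum_{j\ge v}\gamma^v_j(a)\varphi(y)^j=\sum_{j\ge v}\gamma^v_j(a)\sum_{i\ge j}P^j_i y^i=\sum_{i\ge v}\Bigl(\sum_{j=v}^{i}\gamma^v_j(a)P^j_i\Bigr)y^i,
$$
where in the last step I swap the order of summation.

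Subtracting, the coefficient of $y^i$ in $\varphi(y^v a)-\varphi(y)^v a$ is precisely $\sum_{j=v}^{i}\gamma^v_j(a)P^j_i-P^v_i\alpha^i(a)$. Hence the difference belongs to $A^{\alpha}[[y]]y^{v+u}$ if and only if this coefficient vanishes for every $a\in A$ and every $v\le i<v+u$, which is exactly assertion (2). No serious obstacle is anticipated here — the only delicate point is bookkeeping in the commutation step that produces the $P^v_i$'s, and I would carry it out as indicated so that the definition of $P^v_i$ matches the coefficients of $\varphi(y)^v$ on the nose.
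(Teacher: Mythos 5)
Your proof is correct and follows essentially the same route as the paper: expand $\varphi(y)^v a = \sum_{i\ge v} P^v_i\alpha^i(a)y^i$ and $\varphi(y^v a)=\sum_{i\ge v}\bigl(\sum_{j=v}^i \gamma^v_j(a)P^j_i\bigr)y^i$, then compare coefficients of $y^i$ for $v\le i<v+u$. The only difference is that you spell out the commutation bookkeeping behind the identity $\varphi(y)^j=\sum_{i\ge j}P^j_i y^i$, which the paper treats as immediate from the definition of the $P^j_i$'s.
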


\begin{proof} Note that $\varphi(y) = P$ where $P := y + a_2y^2 + a_3y^3 + \cdots$. Clearly
$$
P^j = \sum_{i=j}^\infty P^j_i y^i.
$$
Hence, we have
\begin{equation}
\begin{aligned}
\varphi(y^v a) &= \sum_{j=v}^\infty \varphi(\gamma^v_j(a)y^j) \\
&= \sum_{j=v}^\infty \gamma^v_j(a) P^j \\
&= \sum_{j=v}^\infty \sum_{i=j}^\infty \gamma^v_j(a) P^j_i y^i\\
&= \sum_{i=v}^\infty \sum_{j=v}^i \gamma^v_j(a) P^j_i y^i.
\end{aligned}\label{eq10}
\end{equation}
On the other hand
\begin{equation}
\varphi(y)^v a = P^v a = \sum_{i=v}^\infty P^v_i y^i a = \sum_{i=v}^\infty P^v_i \alpha^i(a) y^i.\label{eq11}
\end{equation}
Comparing the coefficients of $y^i$ in~\eqref{eq10} and~\eqref{eq11} for $v\le i <v+u$, we obtain that $1)\Leftrightarrow 2)$.
\end{proof}

\begin{lemma}\label{lema2} Let $u\in \mathds{N}$. If
$$
\varphi(ya)-\varphi(y)a\in A^{\alpha}[[y]]y^{1+u}\quad\text{$\forall\, a\in A$,}
$$
then
$$
\varphi\left(y^v\sum c_j y^j\right) - \varphi(y)^v\left(\sum c_j y^j\right) \in A^{\alpha} [[y]]y^{v+u}\quad \text{$\forall\, v\in \mathds{N}$ and $\sum c_j y^j\in A_s[[y]]$.}
$$
\end{lemma}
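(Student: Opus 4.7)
The plan is an induction on $v$, preceded by a refinement of the $v=1$ case. Throughout I write $X=\sum_jc_jy^j$. Consistent with the author's earlier commented-out argument, the second summand is read via $\varphi$, so that the content of the statement at level $v$ is
\[
\varphi(y^vX)\equiv \varphi(y)^v\Bigl(\sum_jc_j\varphi(y)^j\Bigr)\pmod{A^\alpha[[y]]\,y^{v+u}}.
\]

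\textbf{Refined base case.} Let $Y=\sum_{j\ge m}c_jy^j\in A_s[[y]]$ have no coefficients in degrees below $m$. I claim
\[
\varphi(yY)-\varphi(y)\cdot\sum_{j\ge m}c_j\varphi(y)^j\in A^\alpha[[y]]\,y^{m+1+u}.
\]
To verify this, I expand $yY=\sum_j(yc_j)y^j$ in $A_s[[y]]$ and apply $\varphi$ term by term, using $\varphi(y^{i+j})=\varphi(y)^{i+j}$, to obtain $\varphi(yY)=\sum_j\varphi(yc_j)\varphi(y)^j$. The hypothesis gives $\varphi(yc_j)=\varphi(y)c_j+R_j$ with $R_j\in A^\alpha[[y]]y^{1+u}$; summing yields $\varphi(yY)=\varphi(y)\sum_jc_j\varphi(y)^j+\sum_jR_j\varphi(y)^j$, and each term $R_j\varphi(y)^j$ lies in $A^\alpha[[y]]y^{1+u+j}\subseteq A^\alpha[[y]]y^{1+u+m}$ because $\varphi(y)^j$ has $y$-order $\ge j$.

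\textbf{Induction on $v$.} The case $v=1$ is the refined base case with $m=0$. For the inductive step, factor $y^{v+1}X=y\cdot(y^vX)$ in $A_s[[y]]$. By the upper triangular property $\gamma^r_i=0$ for $i<r$, the series $y^vX$ has no coefficients in degrees below $v$, so the refined base case with $m=v$ yields
\[
\varphi(y^{v+1}X)\equiv\varphi(y)\cdot\varphi(y^vX)\pmod{A^\alpha[[y]]\,y^{v+1+u}}.
\]
Substituting the inductive hypothesis $\varphi(y^vX)=\varphi(y)^v\bigl(\sum_jc_j\varphi(y)^j\bigr)+S$ with $S\in A^\alpha[[y]]y^{v+u}$, and noting that $\varphi(y)\cdot S\in A^\alpha[[y]]y^{v+1+u}$ since $\varphi(y)$ has $y$-order $\ge 1$, I conclude
\[
\varphi(y^{v+1}X)\equiv\varphi(y)^{v+1}\Bigl(\sum_jc_j\varphi(y)^j\Bigr)\pmod{A^\alpha[[y]]\,y^{v+1+u}},
\]
closing the induction.

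\textbf{Main obstacle.} The delicate point is the bookkeeping of $y$-orders in the error: a single use of the hypothesis affords only an error in $y^{1+u}$, which by itself is not enough to drive the induction up to $y^{v+u}$. It is the refinement in the base case, where the error $R_j\varphi(y)^j$ absorbs the extra factor $\varphi(y)^j$ of $y$-order $\ge j$, that makes the error scale with the lowest degree of $y^vX$ and thereby accumulates the additional $v$ through the induction steps.
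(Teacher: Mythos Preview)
Your proof is correct and follows essentially the same approach as the paper: both expand $\varphi(yY)=\sum_j\varphi(yc_j)\varphi(y)^j$, replace $\varphi(yc_j)$ by $\varphi(y)c_j$ modulo an error, and then induct on $v$ by writing $y^{v+1}X=y\cdot(y^vX)$. The only difference is that you make explicit the refinement (tracking that the error $R_j\varphi(y)^j$ lands in $y^{1+u+j}$, so that for $Y=y^vX$ of order $\ge v$ the congruence improves to $y^{v+1+u}$) which the paper leaves implicit in the phrase ``an inductive argument shows now that\dots''.
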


\begin{proof} We have
\begin{align*}
\varphi\Bigl(y \sum c_j y^j\Bigr) & = \varphi\Bigl(\sum y c_j y^j\Bigr)\\
& = \sum \varphi(y c_j y^j)\\
& = \sum \varphi(y c_j) \varphi(y)^j\\
& \equiv \sum \varphi(y) c_j \varphi(y)^j\pmod{A^{\alpha}[[y]]y^{1+u}}\\
& = \varphi(y)\varphi\Bigl(\sum c_j y^j\Bigr).
\end{align*}
An inductive argument shows now that
$$
\varphi\left(y^v\sum c_j y^j\right) =\varphi(y^v)\varphi\left(\sum c_j y^j\right)\pmod{A^{\alpha}[[y]]y^{v+u}} \qquad\text{for all $v\in \mathds{N}$,}
$$
as desired.
\end{proof}

\begin{lemma}\label{lema3} Let $u,v\in \mathds{N}$ such that $u\ge v$. If
$$
P^1_i \alpha^i(a) = \sum_{j=1}^i \gamma^1_j(a) P^j_i\quad\text{for all $a\in A$ and $1\le i < u+1$.}
$$
then
$$
P^v_i \alpha^i(a) = \sum_{j=v}^i \gamma^v_j(a) P^j_i\quad\text{for all $a\in A$ and $v\le i < u+v$.}
$$
\end{lemma}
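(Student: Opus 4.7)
The plan is to prove Lemma~\ref{lema3} as a direct three-step chain that moves through the $\varphi$-picture: translate the hypothesis into a statement about $\varphi$ via Lemma~\ref{lema}, extend that statement from $y$ to $y^v$ via Lemma~\ref{lema2}, and then translate back using Lemma~\ref{lema} again.

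First, I would apply Lemma~\ref{lema} in the direction $(2)\Rightarrow(1)$ with $v=1$. The hypothesis of Lemma~\ref{lema3} is precisely condition~$(2)$ of Lemma~\ref{lema} in the case $v=1$ (with $r$ there playing the role of $i-1$ here and the same $u$), so we obtain
$$
\varphi(ya)-\varphi(y)a\in A^{\alpha}[[y]]y^{1+u}\quad\text{for all } a\in A.
$$

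Second, this is exactly the hypothesis of Lemma~\ref{lema2}. Taking $\sum c_j y^j := a\in A_s[[y]]$ in its conclusion, and using $u\ge v$ so that $A^{\alpha}[[y]]y^{v+u}$ still makes sense in the relevant range, we deduce
$$
\varphi(y^v a)-\varphi(y)^v a\in A^{\alpha}[[y]]y^{v+u}\quad\text{for all } a\in A.
$$

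Third, I would apply Lemma~\ref{lema} in the direction $(1)\Rightarrow(2)$, now for the given $v$ and the same $u$. The conclusion is
$$
P^v_i \alpha^i(a) = \sum_{j=v}^i \gamma^v_j(a) P^j_i\quad\text{for all } a\in A\text{ and }v\le i < u+v,
$$
which is precisely the desired statement. There is essentially no obstacle here: the lemma is a purely formal consequence of the two preceding ones, and the only thing to double-check is the bookkeeping of the index ranges (namely, that the range $v\le i<u+v$ in the target corresponds to $0\le r<u$ under $r=i-v$ in the Lemma~\ref{lema} reformulation, and that the hypothesis $u\ge v$ is used only to guarantee that Lemma~\ref{lema2} can be invoked with the required order of vanishing).
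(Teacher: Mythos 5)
Your proposal is correct and is essentially the paper's own argument: the paper proves this lemma exactly by combining Lemma~\ref{lema} (with $v=1$ to translate the hypothesis, then with general $v$ to translate back) and Lemma~\ref{lema2} in between. The only tiny quibble is that the hypothesis $u\ge v$ is not actually needed for this chain (Lemma~\ref{lema2} holds for all $v$), but stating it does no harm.
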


\begin{proof} Follows from Lemmas~\ref{lema} and~\ref{lema2}.
\end{proof}

\begin{proposition} The following assertions are equivalent:

\begin{enumerate}

\item The formal extension $A_s[[y]]$ is equivalent to the trivial formal extension associated with $\alpha$, via the map $\varphi$ determined by
$$
\qquad\quad\varphi(y) := y + a_2y^2 + a_3y^3 + a_4 y^4+\cdots\quad\text{and}\quad \varphi\left(\sum b_i y^i\right) := \sum b_i \varphi(y)^i.
$$

\smallskip

\item We have
\begin{equation}
P^1_i \alpha^i(a) = \sum_{j=1}^i \gamma^1_j(a) P^j_i\quad\text{for all $a\in A$ and $i \ge 1$.} \label{exttriv1}
\end{equation}

\end{enumerate}

\end{proposition}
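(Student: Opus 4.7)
For the implication $(1)\Rightarrow(2)$, I would exploit the hypothesis that $\varphi\colon A_s[[y]]\to A^{\alpha}[[y]]$ is an algebra isomorphism fixing $A$ pointwise: this gives $\varphi(ya)=\varphi(y)\varphi(a)=\varphi(y)a$ for every $a\in A$, so $\varphi(ya)-\varphi(y)a=0$ lies in $A^{\alpha}[[y]]y^{1+u}$ for every $u\ge 0$. Applying Lemma~\ref{lema} with $v=1$ and letting $u$ be arbitrary, condition~(2) of the proposition follows for all $i\ge 1$.

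For the converse $(2)\Rightarrow(1)$, I would first invoke Lemma~\ref{lema} with $v=1$ in the opposite direction, using the arbitrariness of $u$, to obtain $\varphi(ya)=\varphi(y)a$ for every $a\in A$. Lemma~\ref{lema2} then upgrades this to
$$
\varphi\Bigl(y^v\sum_j c_j y^j\Bigr)=\varphi(y)^v\varphi\Bigl(\sum_j c_j y^j\Bigr)
$$
for every $v\ge 1$ and every $\sum_j c_j y^j\in A_s[[y]]$. Using $\varphi(y^v)=\varphi(y)^v$, which holds by the very definition of $\varphi$, and combining with the built-in left $A$-linearity, I would then compute
$$
\varphi\Bigl(\Bigl(\sum_i b_i y^i\Bigr)w\Bigr)=\sum_i b_i\,\varphi(y^i w)=\sum_i b_i\,\varphi(y^i)\varphi(w)=\varphi\Bigl(\sum_i b_i y^i\Bigr)\varphi(w)
$$
for all $\sum_i b_i y^i$ and $w$ in $A_s[[y]]$, establishing multiplicativity. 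Since $\varphi$ fixes $A$, is unital, and satisfies $\varphi(y)=y+R$ with $R\in A^{\alpha}[[y]]y^2$, the standard substitution argument for power series with linear coefficient $1$ in $y$ shows that $\varphi$ is bijective, hence provides the required equivalence.

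The main obstacle is already packaged into Lemma~\ref{lema}: expanding $\varphi(y^v a)$ via the twisting relations in $A_s[[y]]$ and expanding $\varphi(y)^v a$ via the relation $ya=\alpha(a)y$ in $A^{\alpha}[[y]]$ and comparing the coefficients of $y^i$ produces exactly the identity in~(2), with the factor $\alpha^i(a)$ appearing because $y^i$ must be pushed past $a$ inside $A^{\alpha}[[y]]$. Once that lemma is in hand, the proposition is the two-step unpacking described above, so the present proof adds essentially no new combinatorics beyond the bootstrap from $v=1$ to general $v$ supplied by Lemma~\ref{lema2} (and, implicitly, Lemma~\ref{lema3}).
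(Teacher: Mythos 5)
Your proof is correct and follows essentially the same route as the paper: Lemma~\ref{lema} (with $v=1$ and $u$ arbitrary) gives both directions of the coefficient identity, Lemma~\ref{lema2} bootstraps to arbitrary powers of $y$, and left $A$-linearity then yields multiplicativity of $\varphi$ exactly as in the paper's computation. Your additional remark on bijectivity of $\varphi$ via the standard substitution argument is a minor point the paper leaves implicit (it is built into the definition of equivalence), and is a welcome clarification rather than a deviation.
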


\begin{proof} $1)\Rightarrow 2)$ by Lemma~\ref{lema}. We next prove that $2)\Rightarrow 1)$. By Lemma~\ref{lema} with $v=1$, we get that $\varphi(y)a = \varphi(ya)$ for all $a\in A$. Hence, by Lemma~\ref{lema2}
$$
\varphi\left(y^i\sum c_j y^j\right) =\varphi(y^i)\varphi\left(\sum c_j y^j\right)\quad\text{for all $i\in \mathds{N}$.}
$$
Consequently,
\begin{align*}
\varphi\left(\biggl(\sum_i b_i y^i\biggr)\biggl(\sum_j c_j y^j\biggr)\right) &= \varphi\left(\sum_i b_i \biggl(y^i \sum_j  c_j y^j\biggr)\right)\\
& = \sum_i b_i \varphi\biggl(y^i \sum_j  c_j y^j\biggr)\\
& = \sum_i b_i\varphi(y^i) \varphi\biggl(\sum_j  c_j y^j\biggr)\\
& = \varphi\biggl(\sum_i b_i y^i\biggr) \varphi\biggl(\sum_j  c_j y^j\biggr),
\end{align*}
as desired.
\end{proof}

\begin{theorem} Let $\alpha$ be an automorphism of $A$. If $H^1(A,{}_{\alpha}A_{\alpha^i})=0$ for all $i>1$, then any upper triangular formal extension with $\gamma^1_1=\alpha$ is equivalent to the trivial formal extension associated with $\alpha$.
\end{theorem}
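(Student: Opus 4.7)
The plan is to invoke the immediately preceding proposition and construct inductively a sequence $(a_i)_{i>1}$ in $A$ for which identity~\eqref{exttriv1} holds for all $i\ge 1$. The base case $i=1$ is automatic, since $P^1_1=1$ and $\gamma^1_1=\alpha$. For the inductive step, assume $a_2,\dots,a_{R-1}$ have been chosen so that~\eqref{exttriv1} holds for $i=1,\dots,R-1$. The crucial combinatorial observation is that the recursion defining $P^j_i$ only involves $a_2,\dots,a_{i-j+1}$; hence for every $j\ge 2$ the element $P^j_R$ is already pinned down by the previous choices, while $P^1_R=a_R$ is the remaining unknown. Setting
$$\Delta(a):=\sum_{j=2}^R\gamma^1_j(a)\,P^j_R,$$
the instance of~\eqref{exttriv1} with $i=R$ is equivalent to
$$\Delta(a)=a_R\alpha^R(a)-\alpha(a)\,a_R\quad\text{for all }a\in A,$$
that is, $\Delta$ should coincide with the inner derivation $\mathrm{b}^1(-a_R)$ from $A$ to ${}_{\alpha}A_{\alpha^R}$.

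The heart of the argument, which I expect to be the main technical obstacle, is to verify that $\Delta\colon A\to {}_{\alpha}A_{\alpha^R}$ is a Hochschild $1$-cocycle, i.e.\ that $\Delta(ab)=\alpha(a)\Delta(b)+\Delta(a)\alpha^R(b)$. The plan is to expand $\gamma^1_j(ab)$ via the Product Law, peel off the $h=1$ summand (which yields exactly $\alpha(a)\Delta(b)$), and rewrite the remaining inner sum $\sum_{j=h}^R\gamma^h_j(b)P^j_R$ as $P^h_R\alpha^R(b)$ for each $h\ge 2$ by applying Lemma~\ref{lema3} with $u=R-1$ and $v=h$. The range $h\ge 2$ is precisely what that lemma supplies under the inductive hypothesis, and, critically, it sidesteps the circular case $h=1$, which would amount to the identity~\eqref{exttriv1} at $i=R$ that we are trying to establish. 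After summing one obtains $\Delta(a)\alpha^R(b)$, proving the derivation property.

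Once $\Delta$ is known to be a $1$-cocycle, the hypothesis $H^1(A,{}_{\alpha}A_{\alpha^R})=0$ (applicable since $R\ge 2$) yields an element $m\in A$ with $\Delta(a)=\alpha(a)m-m\alpha^R(a)$. Taking $a_R:=-m$ gives the required equation and completes the induction. The resulting map $\varphi$ built from the sequence $(a_i)_{i>1}$ is then, by the preceding proposition, an equivalence between $A_s[[y]]$ and the trivial formal extension $A^{\alpha}[[y]]$.
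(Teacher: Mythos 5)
Your proposal is correct and follows essentially the same route as the paper: an induction constructing $(a_i)_{i>1}$ satisfying~\eqref{exttriv1}, with $\Delta(a):=\sum_{j=2}^{R}\gamma^1_j(a)P^j_R$ shown to be an $(\alpha,\alpha^R)$-derivation via the Product law and Lemma~\ref{lema3} (applied exactly for $h\ge 2$, avoiding the circular $h=1$ case), and the vanishing of $H^1(A,{}_{\alpha}A_{\alpha^R})$ supplying the element $a_R$. Your explicit remark that $P^j_R$ for $j\ge 2$ depends only on $a_2,\dots,a_{R-1}$, and your care with the sign in the inner-derivation convention, are the same points the paper handles implicitly.
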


\begin{proof} It suffices to find $(a_i)_{i>1}$ such that~\eqref{exttriv1} is fulfilled. We proceed by induction on $i$. Since $P^1_1=1$ and $\gamma_1^1=\alpha$,
$$
P^1_1 \alpha(a) =  \gamma^1_1(a) P^1_1\quad\text{for all $a\in A$.}
$$
and so condition~\eqref{exttriv1} is satisfied for $i=1$. Suppose we have $a_1,\dots,a_u$ such that~\eqref{exttriv1} holds for $i=1,\dots,u$. For $a\in A$ we define
$$
\Delta(a):= \sum_{j=2}^{u+1} \gamma^1_j(a) P^j_{u+1}.
$$
We notice that equality~\eqref{exttriv1} for $i=u+1$ is equivalent to
\begin{equation}
\Delta(a) = P^1_{u+1} \alpha^{u+1}(a) - \alpha(a) P^1_{u+1},\label{delta}
\end{equation}
and so we have to find $P^1_{u+1}$, such that the equality~\eqref{delta} is satisfied. We claim that $\Delta$ is a $(\alpha,\alpha^{u+1})$-derivation. In fact,
\begin{align*}
\Delta(ab)=&\sum_{j=2}^{u+1}\gamma^1_j(ab)P^j_{u+1}\\
=& \sum_{j=2}^{u+1}\sum_{h=1}^j \gamma^1_h(a)\gamma^h_j(b)P^j_{u+1} \\
=& \sum_{j=2}^{u+1} \gamma^1_1(a)\gamma^1_j(b)P^j_{u+1} + \sum_{j=2}^{u+1}\sum_{h=2}^j \gamma^1_h(a) \gamma^h_j(b)P^j_{u+1} \\
=& \alpha(a)\Delta(b) + \sum_{h=2}^{u+1} \gamma^1_h(a) \sum_{j=h}^{u+1} \gamma^h_j(b)P^j_{u+1}.
\end{align*}
Since, by Lemma~\ref{lema3},
$$
\sum_{j=h}^{u+1} \gamma^h_j(b) P^j_{u+1} = P^h_{u+1}\alpha^{u+1}(b),
$$
we have
$$
\Delta(ab)=\alpha(a)\Delta(b)+\sum_{h=2}^{u+1} \gamma^1_h(a) P^h_{u+1}\alpha^{u+1}(b)=\alpha(a)\Delta(b)+\Delta(a)\alpha^{u+1}(b),
$$
which proves the claim. Since $H^1(A,{}_{\alpha}A_{\alpha^{u+1}})=0$, every $(\alpha,\alpha^{u+1})$-derivation is inner, and so, there exists an element $a_{u+1}\in A$ such that
$$
\Delta(a) = a_{u+1} \alpha^{u+1}(a) - \alpha(a) a_{u+1}.
$$
Consequently~\eqref{delta} is satisfied if we take $P^1_{u+1}:=a_{u+1}$.
\end{proof}


\subsection{Non commutative truncated polynomial extensions of  $\mathbf{k}\boldsymbol{[}\mathbf{x}\boldsymbol{]}/ \boldsymbol{\langle}\mathbf{x}^{\mathbf{m}}\boldsymbol{\rangle}$}

\begin{theorem} Let $P_1,\cdots,P_{n-1}$ be polynomials in $A:=k[x]/\langle x^m\rangle$ such that $x$ divides $P_j$ for all $j$. Then there exists a unique upper triangular twisting map $s_n\colon C_n\otimes A\to A\otimes C_n$ with $\gamma^1_j(x)=P_j$ for each $j$.
\end{theorem}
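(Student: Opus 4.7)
The plan is to reduce the problem to the polynomial ring $k[x]$, where Hochschild cohomology vanishes in degrees~$\ge 2$, and then descend to the truncated quotient $A=k[x]/\langle x^m\rangle$.

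\textbf{Step 1 (lift to $k[x]$).} Choose lifts $\tilde P_1,\dots,\tilde P_{n-1}\in k[x]$ of $P_1,\dots,P_{n-1}$, with $x\mid \tilde P_j$ for every $j$. I construct, by induction on $n$, a unique upper triangular twisting map $\tilde s_n\colon C_n\ot k[x]\to k[x]\ot C_n$ with $\tilde\gamma^1_j(x)=\tilde P_j$ for $j=1,\dots,n-1$. The base case $n=2$ is immediate: since $\tilde\gamma^1_0=0$, the Product law for $\tilde\gamma^1_1$ collapses to multiplicativity, so $\tilde\gamma^1_1$ must be the unique $k$-algebra endomorphism of $k[x]$ sending $x$ to $\tilde P_1$. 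For the inductive step I apply Theorem~\ref{obstruccion en la cohomologia} with $k[x]$ in place of $A$. Because $k[x]$ has Hochschild cohomological dimension~$1$, every cocycle $\tilde F\in Z^2(k[x],{}_{\alpha}k[x]_{\alpha^n})$ is a coboundary, so some extending $\tilde\gamma^1_n$ exists; adding a suitable $(\alpha,\alpha^n)$-derivation $\delta$ of $k[x]$ (which is determined freely by $\delta(x)\in k[x]$) adjusts its value at $x$ to equal $\tilde P_n$. Uniqueness of the extension follows from the fact that any $(\alpha,\alpha^n)$-derivation $\delta$ of $k[x]$ with $\delta(x)=0$ satisfies $\delta(x^k)=\alpha(x)\delta(x^{k-1})$ and hence vanishes.

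\textbf{Step 2 (descent to $A$).} Next I show that each $\tilde\gamma^1_j$ preserves the ideal $\langle x^m\rangle$ of $k[x]$, so induces a well-defined $k$-linear map $\gamma^1_j\colon A\to A$. The crucial preliminary observation is that each $\tilde\gamma^1_u$ preserves the ideal $(x)$: writing $f=xq$, the Product law gives
\[\tilde\gamma^1_u(xq)=\sum_{i\ge 1}\tilde\gamma^1_i(x)\,\tilde\gamma^i_u(q)=\sum_{i\ge 1}\tilde P_i\,\tilde\gamma^i_u(q)\in(x),\]
since each $\tilde P_i$ lies in~$(x)$. Iterating this, every composition $\tilde\gamma^1_{u_1}\xcirc\cdots\xcirc\tilde\gamma^1_{u_i}(x)$ lies in~$(x)$, hence $\tilde\gamma^i_j(x)\in(x)$ for all $i,j\ge 1$. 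An induction on $k$ based on the Product law
\[\tilde\gamma^1_j(x^k)=\sum_{i\ge 1}\tilde\gamma^1_i(x^{k-1})\,\tilde\gamma^i_j(x)\in(x^{k-1})\cdot(x)=(x^k)\]
then yields $\tilde\gamma^1_j(x^k)\in(x^k)$ for all $k\ge 0$; in particular $\tilde\gamma^1_j(\langle x^m\rangle)\subseteq\langle x^m\rangle$. The descended maps $\gamma^1_j\colon A\to A$ inherit the normalizations $\gamma^1_j(1)=\delta_{1j}$ and the Product law, so by Corollary~\ref{gamma como suma} (combined with the remark that $\gamma^n_j=0$ is automatic when $\gamma^1_0=0$) they assemble into an upper triangular twisting map $s_n$ with $\gamma^1_j(x)=P_j$, as required.

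\textbf{Step 3 (uniqueness on $A$).} I prove uniqueness by induction on $n$. Given two upper triangular twisting maps $s_n,s'_n$ on $C_n\ot A$ sharing the same polynomials $P_j$, the upper triangular identities $\gamma^i_j=0$ for $i>j$ ensure that both restrict to upper triangular twisting maps on $C_{n-1}\ot A$ with the same $P_1,\dots,P_{n-2}$. By induction these restrictions agree, so $\gamma^1_j=\gamma'^1_j$ for $j\le n-2$. The difference $\delta:=\gamma^1_{n-1}-\gamma'^1_{n-1}$ is then a $(\alpha,\alpha^{n-1})$-derivation of $A$ with $\delta(x)=0$, and the same recursion $\delta(x^k)=\alpha(x)\delta(x^{k-1})$ forces $\delta=0$.

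\textbf{Main obstacle.} The delicate point is the descent in Step~2. A priori the Product law only directly guarantees that $\tilde\gamma^1_u$ preserves~$(x)$; what makes the descent work is that each application of the Product law produces an extra factor $\tilde\gamma^i_j(x)$ which, by the iterated $(x)$-preservation, already lies in~$(x)$. Combined with the inductive gain of $x^{k-1}$ from $\tilde\gamma^1_i(x^{k-1})$, this yields the required $x^k$ at each step and is exactly where the hypothesis $x\mid P_j$ is used in an essential way.
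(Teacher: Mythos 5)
Your proof is correct, but it takes a genuinely different route from the paper. The paper works entirely inside $A=k[x]/\langle x^m\rangle$: at the inductive step it simply defines $\gamma^1_l(x^h):=\bigl(M_{(l)}(x)^h\bigr)_{1l}$ using powers of the explicit upper triangular matrix $M_{(l)}(x)$, reads off the Product law for $x^u\cdot x^v$ with $u+v<m$ from matrix multiplication, and handles the truncation range $u+v\ge m$ by observing that $x$ divides every entry of $M_{(l)}(x)$ (uniqueness being implicit, since the same recursion forces the values $\gamma^1_l(x^h)$). You instead lift to $k[x]$, invoke Theorem~\ref{obstruccion en la cohomologia} together with the vanishing of $H^2(k[x],{}_{\alpha}k[x]_{\alpha^n})$ (valid since $k[x]$ has a length-one bimodule resolution over any commutative $k$) and the fact that $(\alpha,\alpha^n)$-derivations of $k[x]$ are freely prescribed by their value at $x$, and then descend modulo $\langle x^m\rangle$ via the key estimate $\tilde\gamma^1_j(x^k)\in (x^k)$, which is exactly where $x\mid P_j$ enters; uniqueness is your separate twisted-derivation argument (essentially the mechanism of Proposition~\ref{unicidaddos}). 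Your descent and uniqueness steps check out: the $(x)$-preservation of the $\tilde\gamma^1_u$, the induction giving $\tilde\gamma^1_j(x^k)\in(x^k)$, the descent of the Product law, and the truncation of a twisting map from $C_n$ to $C_{n-1}$ (using $\gamma^{n-1}_j=0$ for $j<n-1$) are all sound; only the base case $n=2$ of your uniqueness induction deserves the trivial remark that two endomorphisms of $A$ agreeing at $x$ coincide, since there the second twist is a priori $\alpha'$ rather than $\alpha$. What each approach buys: the paper's argument is elementary, finite and self-contained within the truncated algebra, while yours is more conceptual, reuses the cohomological machinery of Section~4, and isolates clearly the role of the hypothesis $x\mid P_j$ as an ideal-preservation (descent) condition rather than a computational fix.
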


\begin{proof} For $n=1$ the result is trivial (the unique twisting map is the flip). Suppose that it is true for $n=l$, and that $x$ divides $\gamma^i_j(x)$ for all $i,j<l$. Define $\gamma^2_l,\dots,\gamma^l_l$ as in the proof of Theorem~\ref{obstruccion en la cohomologia} or, which is equal, as in Proposition~\ref{variante de gamma como suma}. By that result, in order to construct $s_{l+1}$ it suffices to define $\gamma^1_l$ satisfying $\gamma^1_l(x)=P_l$ and the Product law. Consider the matrix
$$
M_{(l)}(x):=
\begin{pmatrix}
x &  0 & \cdots & 0 & 0\\
0 & \gamma^1_1(x) &\cdots & \gamma^1_{l-1}(x) & P_l\\
0 & 0 & \cdots & \gamma^2_{l-1}(x) & \gamma^2_l(x)\\
\vdots & \vdots & \ddots  & \vdots & \vdots\\
0 & 0 & \cdots & 0 & \gamma^l_l(x)
\end{pmatrix},
$$
and take $\gamma^1_l(x^h):=\bigl(M_{(l)}(x)^h\bigr)_{1l}$. By the formula of the matrix product
\begin{equation}
\gamma_l^1(x^{u+v})=\sum_{j=1}^l\gamma^1_j(x^u)\gamma^j_l(x^v), \label{3}
\end{equation}
provided that $u+v<m$. To conclude that~\eqref{3} also holds when $u+v\ge m$ it suffices to verify that $x$ divides $M_{(l)}(x)$. We leave this task to the reader.
\end{proof}

\begin{corollary} If $P_1,P_2,\cdots$ is a sequence of polynomials in $A:=k[x]/\langle x^m \rangle$ such that $x$ divides $P_j$ for all $j$, then there exists an algebra structure on $A[[y]]$ such that
\begin{itemize}

\smallskip

\item[-] $A$ and $k[[y]]$ are unitary subalgebras of $A[[y]]$,

\smallskip

\item[-] $\left(\sum_{i=0}^{\infty}a_iy^i\right)y = \sum_{i=0}^{\infty}a_iy^{i+1}$,

\smallskip

\item[-] The canonical surjection $A[[y]]\to A$ is a morphism of algebras,

\smallskip

\item[-] $yx = \sum_{i=1}^{\infty} P_i y^i$.

\smallskip

\end{itemize}

\end{corollary}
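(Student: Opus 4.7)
The plan is to use the preceding Theorem to build, for each $n\ge 1$, an upper triangular truncated polynomial extension, and then assemble these into the desired formal extension via Remark~\ref{twisted extensions by power series}.

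First, for each $n\ge 1$, applying the preceding Theorem to $P_1,\dots,P_{n-1}$ (each of which is divisible by $x$, by hypothesis) yields a unique upper triangular twisting map $s_n\colon C_n\ot A\to A\ot C_n$ whose associated maps, which I denote $\gamma^{1,n}_j$, satisfy $\gamma^{1,n}_j(x)=P_j$ for $1\le j<n$. This gives a unitary $k$-algebra $A\ot_{s_n} C_n$ whose underlying $k$-module is $A[y]/\langle y^n\rangle$.

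Second, I show compatibility: the canonical projection $\pi\colon C_{n+1}\to C_n$ (sending $y\mapsto y$) induces an algebra surjection $A\ot_{s_{n+1}}C_{n+1}\to A\ot_{s_n}C_n$. Indeed, $(\ide\ot\pi)\xcirc s_{n+1}\xcirc (\pi\ot\ide)$ is an upper triangular twisting map $C_n\ot A\to A\ot C_n$, and its associated maps in degrees $0\le j<n$ coincide with $\gamma^{1,n+1}_j$, which in particular send $x$ to $P_j$. By the uniqueness asserted in the preceding Theorem, this composite must equal $s_n$. Consequently $(A\ot_{s_n}C_n)_{n\ge 1}$ forms an inverse system of $k$-algebras.

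Third, by Remark~\ref{twisted extensions by power series}, having such a compatible family of algebra structures on the truncations $A[y]/\langle y^n\rangle$ is precisely the same data as having an algebra structure on $A[[y]]$ satisfying the first three bullets of the Corollary (equivalently, the inverse limit of the $A\ot_{s_n}C_n$ in the category of $k$-algebras has underlying $k$-module $A[[y]]$ and inherits such a structure). The fourth bullet follows by evaluating the twisting map: in $A\ot_{s_n}C_n$ one has $yx=s_n(y\ot x)=\sum_{j=1}^{n-1}\gamma^{1,n}_j(x)\ot y^j=\sum_{j=1}^{n-1}P_j\, y^j$, and passing to the inverse limit yields $yx=\sum_{j=1}^\infty P_j\, y^j$ in $A[[y]]$.

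The only subtle point is the compatibility in the second step, but since the uniqueness in the preceding Theorem is sharp and imposes no further hypotheses, the verification reduces to a direct invocation of that uniqueness; the rest of the argument is a mechanical translation through Remark~\ref{twisted extensions by power series}.
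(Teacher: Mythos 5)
Your proposal is correct and takes essentially the same route as the paper, whose proof simply invokes the preceding theorem together with Remark~\ref{twisted extensions by power series}; you are just spelling out the compatibility-via-uniqueness step and the evaluation $s_n(y\ot x)=\sum_j P_j\ot y^j$ that the paper leaves implicit. (One cosmetic slip: since $\pi\colon C_{n+1}\to C_n$ points the wrong way, the truncated twisting map should be written $(\ide\ot\pi)\xcirc s_{n+1}\xcirc(\sigma\ot\ide)$ with $\sigma\colon C_n\to C_{n+1}$ the $k$-linear section $y^j\mapsto y^j$, which is indeed a twisting map because $\gamma^1_0=0$ makes the $\gamma^r_j$ with $j<n$ independent of $\gamma^1_n$.)
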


\begin{proof} Apply Remark~\ref{twisted extensions by power series}.
\end{proof}

\end{document}